\newtheorem{theorem}{Theorem}[section]
\newtheorem{lemma}[theorem]{Lemma}
\theoremstyle{definition}
\newtheorem{definition}[theorem]{Definition}
\newtheorem{remark}{Remark}
\newcommand{\D}{\mathcal{D}}
\newcommand{\E}{\mathcal{E}}
\newcommand{\F}{\mathcal{F}}
\newcommand{\R}{\mathbb{R}}
\newcommand{\N}{\mathbb{N}}
\newcommand{\dd }{\,\mathrm{d}}
\newcommand{\X}{\mathbf{X}}
\newcommand{\EW}{\mathbb{E}}
\newcommand{\vX}{\widehat{\X}}
\newcommand{\vT}{\widehat{T}}
\title{Probabilistic Interpretation of Electrical Impedance Tomography}
\author{Petteri Piiroinen}
\address{Petteri Piiroinen\\ÊDepartment of Mathematics and Statistics\\  University of Helsinki \\ \phantom{Ho}FI-00014 Helsinki, Finland}
\email{petteri.piiroinen@helsinki.fi}
\author{Martin Simon}
\address{Martin Simon\\ÊInstitute of Mathematics\\ Johannes Gutenberg University\\ 55099 Mainz, \phantom{Ho}Germany} 
\email{simon@math.uni-mainz.de}
\date{\today}
\chardef\bslchar=`\\ % p. 424, TeXbook
\providecommand{\qedsymbol}{\leavevmode
  \hbox to.77778em{%
  \hfil\vrule
  \vbox to.675em{\hrule width.6em\vfil\hrule}%
  \vrule\hfil}}
\gdef\?#1>{{\normalfont$\langle$\textup{#1}$\rangle$}}
\gdef\0{\relax}
\def\<#1>{{\normalfont$\langle$\textup{#1}$\rangle$}}
\def\latex/{{\protect\LaTeX}}
\begin{document}
\maketitle
\markboth{Petteri Piiroinen and Martin Simon}{Probabilistic Interpretation of EIT}

%%%%%%%%%%%%%%%%%%%%%%%%%%%%%%%%%%%%%%%%%%%%%%%%%%%%%%%%%%%%%%%%%%%%%%%%
\begin{abstract}
In this paper, we give probabilistic interpretations of both, the forward and the inverse problem of electrical impedance tomography with possibly anisotropic, merely measurable conductivities: 
Using the theory of symmetric Dirichlet spaces, Feynman-Kac type formulae corresponding to different electrode models on bounded Lipschitz domains are derived. Moreover, we give a probabilistic interpretation of the Calder\'{o}n inverse conductivity problem in terms of reflecting diffusion processes and their corresponding boundary trace processes. 
\end{abstract}

%%%% Section 1 %%%% 
\section{Introduction}
In this work we derive purely probabilistic representations in the form of
Feynman-Kac type formulae for solutions of the conductivity equation 
\begin{equation}\label{eqn:con}
\nabla\cdot(\kappa\nabla u)=0
\end{equation}
posed on a bounded, simply connected domain $D\subset\mathbb{R}^d,\
d\geq 2$, with Lipschitz boundary $\partial D$ and possibly anisotropic
uniformly elliptic and uniformly bounded conductivity
subject to different boundary conditions modeling electrode measurements.  
Moreover, we provide a probabilistic interpretation of the inverse conductivity problem 
of electrical impedance tomography (EIT), the so-called \emph{Calder\'{o}n problem} 
which reads as follows: \emph{Given the Cauchy data on the boundary, i.e., 
all pairs of voltage and current measurements on $\partial D$, is it possible 
to determine the conductivity $\kappa$ uniquely?} Our probabilistic interpretation 
generalizes results for the reflecting Brownian motion (RBM) obtained 
by Hsu \cite{Hsu2}.  

Although it is beyond the scope of this paper we would like to emphasize that 
Feynman-Kac type representation formulae provide a versatile tool when it 
comes to problems with random, rapidly oscillating coefficients. For a 
one-dimensional statistical inverse problem, such a setting was recently 
studied by Nolen and Papanicolaou \cite{Nolen}. Moreover, due to the advent 
of multicore computing architectures, Feynman-Kac type representation formulae 
can yield fast and scalable parallel algorithms for stochastic numerics, 
see for instance the recent articles \cite{Bossyetal, Lejay1, LejayMaire,Yan}. 

For non-divergence form operators with smooth coefficients on smooth
bounded domains, it is well-known that reflecting diffusion processes
satisfy Skorohod type stochastic differential equations, see the
celebrated article by Lions and Sznitman \cite{Lions}. The construction
in the case of divergence form operators with merely measurable
coefficients requires the theory of symmetric Dirichlet spaces and is a
major challenge for an arbitrary Euclidean domain $D$ due to the fact
that the underlying Dirichlet space is not necessarily regular on
$L^2(D)$. Thus in general, the reflecting diffusion process can only be
constructed on some abstract closure of $D$, see the pioneering 
work by Chen \cite{Chen}. 
When $D$ is a bounded Lipschitz domain, Bass and Hsu \cite{Bass}
constructed a reflecting Brownian motion living on $\overline{D}$ by
showing that the so-called Martin-Kuramochi boundary coincides with the
Euclidean boundary in this case. A general diffusion process on a
bounded Lipschitz domain, even allowing locally a finite number of
H\"older cusps, was first constructed by Fukushima and Tomisaki
\cite{FukushimaTomisaki}. We use this process here in order to derive
Feynman-Kac type representation formulae for the solutions of 
boundary value problems with Neumann, respectively Robin  
boundary conditions modeling electrode measurements, for the 
conductivity equation (\ref{eqn:con}). However, the construction 
in \cite{FukushimaTomisaki} concentrates on the strong Feller 
resolvent of the process rather than on its transition kernel density. 
Therefore we give a proof of the H\"older continuity up to the boundary 
of the latter.
  
Probabilistic approaches to both, parabolic and elliptic boundary value
problems for second order differential operators have been studied by
many authors, starting with Feynman's Princeton thesis \cite{Feynman}
and the article \cite{Kac} by Kac. The probabilistic approach to the
Dirichlet problem for a general class of second-order elliptic operators
with merely measurable coefficients, even allowing singularities of a
certain type, was elaborated by Chen and Zhang \cite{ChenZhang}; See
also Zhang's paper \cite{Zhang}. However, there are only few works that
treat Feynman-Kac type representation formulae for Neumann or Robin type
boundary conditions.  Moreover the approaches existing in the literature
consider either the Laplacian, see e.g. \cite{Bass,Brosamler,Hsu}, or
non-divergence form operators with smooth coefficients, see e.g.
\cite{Freidlin, Papanicolaou, Pardoux}. 
For the particular case of the conductivity equation (\ref{eqn:con}) on
bounded domains we generalize both, the Feynman-Kac formula
for the Robin problem on domains with boundary of class $C^3$ for an
isotropic $C^{2,\gamma}$-smooth conductivity, $\gamma>0$, obtained by
Papanicolaou \cite{Papanicolaou} as well as the representation
obtained by Bench{\'e}rif-Madani and Pardoux \cite{Pardoux} for the
Neumann problem under similar regularity assumptions. While both of
the aforementioned approaches use stochastic
differential equations and It\^{o} calculus, our approach is based on the 
theory of symmetric Dirichlet spaces, following the pioneering work  
by Bass and Hsu \cite{Bass} for the RBM.
Our Feynman-Kac type formula for the Robin boundary condition 
corresponding to the so-called \emph{complete electrode model} is 
valid for possibly anisotropic uniformly elliptic uniformly bounded conductivities with 
merely measurable components on bounded Lipschitz domains.
For the Neumann boundary condition corresponding to the so-called 
\emph{continuum model} we have to impose a slightly stronger regularity
assumption for the conductivity in some neighborhood of the boundary. 

During the preparation of this work we became aware of the paper
\cite{ChenZhang2} by Chen and Zhang, where a probabilistic approach to
some mixed boundary value problems with singular coefficients is
derived.  In contrast to our setting, however, the mixed boundary
conditions studied there come from a singular lower-order
term of the differential operator.

Finally we would like to point out that our Feynman-Kac type formulae yield 
continuity of the electric potential up to the boundary, a result that
is apparently not so easy to obtain by standard Sobolev regularity theory for
linear elliptic boundary value problems. In fact, by the celebrated 
Wiener criterion, solutions of the Laplace equation with Dirichlet  boundary 
conditions are continuous at a boundary point if and only if the so-called 
Wiener integral associated with this point diverges.
For Robin boundary conditions, on the other hand, the situation was 
not as well understood for quite a long time. In 2001 Griepentrog and Recke were 
able to prove continuity up to the boundary under very general assumptions,
however, using a rather abstract framework based on Sobolev-Campanato spaces, cf.~\cite{Griepentrog}.
In contrast to their (much more general) method, our proof is purely probabilistic 
and rather simple.

The rest of the paper is structured as follows: We start in Section \ref{section2} by 
collecting some preliminaries concerning electrical impedance 
tomography as well as standard Dirichlet space theory for reflecting
diffusion processes. In Section \ref{section3} we show that the transition kernel density 
of the underlying reflecting diffusion process is H\"older continuous 
up to the boundary which enables the refinement of the process.
Subsequently, in Section \ref{section4}, the Feynman-Kac type formulae 
will be derived. Furthermore, a martingale formulation for the complete electrode model 
is given. Then in Section \ref{section5} we provide a probabilistic interpretation
of the Calder\'{o}n problem. Finally, we conclude with a brief summary 
of our results.

%%%% Section 2 %%%% 
\section{Preliminaries}
\label{section2}
First a word about notation:
We denote by $\left<\cdot,\cdot\right>$ the standard inner product on
$L^2(D)$ and by $\lvert\lvert\cdot\rvert\rvert$ the corresponding norm. We use the subscript \lq$\diamond$\rq\ to denote standard Lebesgue, respectively Sobolev spaces with a certain normalization, namely
\begin{equation*}
L^2_{\diamond}(\partial D):=\Big\{\phi\in L^2(\partial D):\int_{\partial D}\phi\dd\sigma(x)=0\Big\},
\end{equation*}
where $\sigma$ denotes the $(d-1)$-dimensional Lebesgue surface measure, and
\begin{equation*}
H^{1}_{\diamond}(D):=\Big\{\phi\in H^{1}(D):\left<\phi,1\right>=0\Big\}.
\end{equation*}
For the reason of notational
compactness we use the Iverson brackets: Let $S$ be a mathematical statement, then 
\begin{equation*}\left[S\right]=\begin{cases}
1,\quad&\text{if }S\text{ is true}\\
0,\quad&\text{otherwise}.
\end{cases}
\end{equation*}
We also use the Iverson brackets $[x\in B]$ to denote the indicator function of a set $B$, which we abbreviate by $[B]$ if there is no danger of confusion.
In what follows, various unimportant constants will be denoted $c,c_1,c_2,...$
and they may vary from line to line. 

\subsection{Modeling of electrode measurements in EIT}
Throughout this paper, let $D$ denote a bounded
Lipschitz domain with connected complement and Lipschitz parameters
$(r_D,c_D)$, i.e., there exist constants $r_D>0$
and $c_D>0$ so that for every $x\in\partial D$ there is a ball
$B(x,r_D)$ such that after rotation and translation $\partial D\cap
B(x,r_D)$ is the graph of a Lipschitz function in the first $d-1$
coordinates with Lipschitz constant no larger than $c_D$ and $D\cap
B(x,r_D)$ lies above the graph of this function. Note that without loss of generality we may take $c_D>1$.
We assume that the, possibly anisotropic, conductivity is defined by a symmetric,
matrix-valued function $\kappa:D\rightarrow \mathbb{R}^{d\times d}$ with
components in $L^{\infty}(D)$ such that $\kappa$ is uniformly bounded and uniformly elliptic, i.e., there exists some constant $c_0>0$ such that
\begin{equation}\label{eqn:ellipticity}
c_0^{-1}\lvert\lvert\xi\rvert\rvert^2\leq \xi\cdot\kappa (x)\xi\leq
c_0\lvert\lvert\xi\rvert\rvert^2,\quad \text{for every
}\xi\in\mathbb{R}^d\text{ and a.e. }x\in D.  
\end{equation}

Moreover we will explicitly state if we use one of the following assumptions:
\begin{enumerate}
\item[(A1)]
There exists a neighborhood $\mathcal{U}$ of the boundary $\partial D$
such that $\kappa|_{\mathcal{U}}$ is isotropic and equal to $1$. 
\item[(A2)] There exists a finite collection $\Gamma=\{\Gamma_j,0\leq
j\leq M\}$ of $C^{1,1}$ surfaces that divide $D$ into disjoint open
sub-domains $\{\mathcal{U}_j,0\leq j\leq M\}$ so that $\partial D\subset
\partial\mathcal{U}_0$ and $\kappa|_{\mathcal{U}_j}\in
W^{1,\infty}(\mathcal{U}_j)$, $j=1,...,M$.  
\end{enumerate}
We are going to require assumption (A1) for the probabilistic interpretation of the
Neumann boundary value problem corresponding to the continuum model in Section \ref{section4}
while assumption (A2) will only be used for the probabilistic interpretation 
of the inverse conductivity problem in Section \ref{section5}. In particular 
the derivation of the Feynman-Kac type formula for the Robin boundary 
condition corresponding to the complete electrode model does not 
require any of these additional regularity assumptions.

\begin{remark}
Notice that the assumption (A1) from above is not very restrictive as it can
be shown using extension techniques that for domains $\hat{D}, D\subset
\R^d$ such that $D\subset\hat{D}$, the knowledge of both, the
Dirichlet-to-Neumann map $\Lambda_{\kappa}$ on $\partial D$ and
$\kappa|_{\hat{D}\backslash\overline{D}}$ yields the
Dirichlet-to-Neumann map $\hat{\Lambda}_{\kappa}$ on $\partial \hat{D}$. 
\end{remark}

The forward problem of EIT can be modeled by different measurement
models. In the so-called \emph{continuum model}, one assumes that it
is possible to measure the electric potential $u$ on the whole boundary
for a prescribed conormal flux through $\partial D$
\begin{equation}\label{eqn:continuum}
\partial_{\kappa\nu}u:=\kappa\nu\cdot \nabla u|_{\partial D}=f,
\end{equation}
where $\nu$ denotes the exterior unit normal vector on $\partial D$ and
$f\in L^2_{\diamond}(\partial D)$ a bounded function modeling the 
signed density of the outgoing current.

The most accurate forward model for real-life impedance tomography is
the \emph{complete electrode model}, cf.~\cite{Cheney,Somersalo},
where under the assumption that measurements are performed using $N$
electrodes $E_l,\ l=1,...,N,$ on the boundary $\partial D$, the electric
potential $u$ satisfies the Robin boundary condition 
\begin{equation}\label{eqn:cem}
\begin{aligned}
&\kappa\nu\cdot\nabla u|_{\partial D}+gu|_{\partial D}=f\quad\text{on
}\partial D, 
\end{aligned}
\end{equation}
for piecewise constant functions $g,f:\partial D\rightarrow\mathbb{R}$
  given by 
\begin{equation}\label{eqn:functions}
g:=\frac{1}{z}\sum_{l=1}^N[E_{l}],\quad
f:=\frac{1}{z}\sum_{l=1}^N U_l[E_{l}].
\end{equation}
Here, $[E_l]$ is the indicator function of the $l$-th electrode
and $U=(U_1,...,U_N)^T$ denotes the prescribed voltage pattern. 
The positive constant $z\in\mathbb{R}_+$ is the so-called
\emph{contact impedance} which models electrochemical effects at the
electrode-object interface. The electrodes in the complete electrode 
model $E_l\subset\partial D,\ l=1,...,N$, are modeled by disjoint, 
simply connected, surface patches, each having a smooth boundary 
curve. Moreover we always assume that the ground voltage has been 
chosen such that 
\begin{equation}\label{ground}
\sum_{l=1}^N U_l =0.
\end{equation}
For given voltage pattern $U\in\mathbb{R}^N$ satisfying (\ref{ground}),
the equations (\ref{eqn:con}) and (\ref{eqn:cem}) define the electric
potential $u\in H^1(D)$ uniquely, cf.~\cite{Somersalo}.

\subsection{Preliminaries from symmetric Dirichlet space theory}
In his seminal paper \cite{Fukushima}, Fukushima established a
one-to-one correspondence between regular symmetric Dirichlet spaces and
symmetric Hunt processes. We assume that the reader is familiar with the
basic results from the theory of symmetric Dirichlet spaces, as
elaborated for instance in the monograph \cite{Fukushimaetal}. 
Let us consider the following symmetric bilinear forms on $L^2(D)$:
\begin{equation}\label{eqn:Dirichlet}
\E(v,w)=\int_{D}\kappa\nabla v(x)\cdot\nabla w(x)\dd  x,\quad
v,w\in\D(\E)=H^1(D);
\end{equation}
\begin{equation}\label{eqn:Dirichlet1}
\E_0(v,w)=\int_{D}\nabla v(x)\cdot\nabla w(x)\dd  x,\quad
v,w\in\D(\E_0)=H^1(D).
\end{equation} 
It is well-known that (\ref{eqn:Dirichlet1}) is associated with the
reflecting Brownian motion (scaled by a factor $2$), while
(\ref{eqn:Dirichlet}) corresponds to a general reflecting diffusion
process. First recall that we may associate with the Dirichlet space
$(\D(\E),\E)$ a non-positive definite self-adjoint operator
$(\mathcal{L},\D(\mathcal{L}))$ such that for $v\in\D(\mathcal{L})$ we
have $\left<-\mathcal{L}v,w\right>=\E(v,w)$ for all $w\in\D(\E)$ and  
\begin{equation*}
\D(\mathcal{L})=\Big\{v\in\D(\E):\exists \phi\in L^2(D)\text{ s.t. }
\E(v,w)=\int_D\phi w\dd  x\ \forall w\in \D(\E)\Big\}.
\end{equation*}
This Dirichlet space is regular on $L^2(D)$, i.e., $\D(\E)\cap
C(\overline{D})$ is dense in both,
$(C(\overline{D}),\lvert\lvert\cdot\rvert\rvert_{\infty})$ and
$(\D(\E),\lvert\lvert\cdot\rvert\rvert_{\E_1})$, where $\lvert\lvert\cdot\rvert\rvert_{\E_{\beta}}:=\sqrt{\E_{\beta}(\cdot,\cdot)}$, $\beta>0$, with $\E_{\beta}(\cdot,\cdot):=\E(\cdot,\cdot)+\beta\left<\cdot,\cdot\right>$.
This follows directly from the convergence of  $v_{\alpha}:=\alpha
G_{\alpha}v$ in $(\D(\E),\lvert\lvert\cdot\rvert\rvert_{\E_1})$ as $\alpha\rightarrow\infty$,
where $\{G_{\alpha},\alpha>0\}$ denotes the strongly continuous
resolvent on $L^2(D)$ associated with $(\D(\E),\E)$, cf.~\cite{Fukushimaetal}, if one has that $v_{\alpha}\in
C(\overline{D})$ for all $\alpha>0$. This is, for instance, a
consequence of our Theorem \ref{thm:1}.
Moreover the Dirichlet space $(\D(\E),\E)$ is local in the sense that
$\E(v,w)=0$, whenever $\mathrm{supp}(v)$ and $\mathrm{supp}(w)$ are
disjoint compact sets. The \emph{capacity} of an open subset $O$ of
$\overline{D}$ is defined by
$\mathrm{Cap}(O)=\inf_{v\in\D(\E)}\{\E_1(v,v):v\geq 1\text{ a.e. on }O\}$ and that of a
general subset is given by $\mathrm{Cap}(B)=\inf\{\mathrm{Cap}(O):O
\text{ is open and }B\subset O\}$. $\mathrm{Cap}$ is a Choquet capacity, cf.~\cite{Fukushimaetal},
and a Borel set $B\subset\overline{D}$ is called $\E$-exceptional if
$\mathrm{Cap}(B)=0$.
The general theory of regular local symmetric Dirichlet spaces yields
that there exist an $\E$-exceptional set
$\mathcal{N}\subset\overline{D}$ and a conservative diffusion process
$\X=(\Omega,\F,\{\X_t,t\geq 0\},\mathbb{P}_{x})$, starting from every
$x\in\overline{D}\backslash \mathcal{N}$ (denoted \lq for quasi every
(abbreviated q.e.) $x\in\overline{D}$\rq), properly associated with $(\D(\E),\E)$.
That is, for every non-negative $\phi\in L^2(D)$ the transition
semigroup $P_t\phi(x):=\EW_{x}\phi(\X_t)$,
$x\in\overline{D}\backslash\mathcal{N}$, of $\X$ is a version of the strongly continuous
semigroup $T_t\phi$ of contractions on $L^2(D)$ associated with
$(\D(\E),\E)$. Note that $\X$ is
in general not a semimartingale and that it is not known in general
where the $\E$-exceptional set $\mathcal{N}$ is located. The latter
imposes a severe limitation on practical applications which we have to
remove here.

For convenience of the reader let us recall the definition of additive
functionals of Markov processes depending on the potential theory of the
bilinear form $\E$ from \cite{Fukushimaetal}. Let $\{\F_t,t\geq 0\}$
denote the minimal augmented filtration generated by $\X$ and without
loss of generality let us assume that $\X$ is defined on the canonical
sample space $\Omega=C([0,\infty);\overline{D})$ on which the time shift
operator $\Theta$  is well-defined by
$\mathbf{X}_s(\Theta_t(\omega))=\mathbf{X}_{t+s}(\omega)$, $s,t\geq 0$.
\begin{definition}
A real-valued stochastic process $A=(\Omega,\F, \{A_t,t\geq
0\},\mathbb{P}_{x})$ is an \emph{additive functional} (abbreviated
AF) of $\mathbf{X}$ if the following conditions hold:
\begin{enumerate}
\item[(i)] $A_t$ is adapted to $\F_t$ for every $t\geq 0$;
\item[(ii)] There exists a defining set $\Lambda\in\F_{\infty}$ and an
$\E$-exceptional set $\mathcal{N}\subset\overline{D}$ such that
$\mathbb{P}_{x}(\Lambda)=1$ for $x\in\overline{D}\backslash
\mathcal{N}$ and $\Theta_t\Lambda\subset\Lambda$ for every $t\geq 0$;
\item[(iii)] For $\omega\in\Lambda$, $A_t(\omega)$ is right-continuous
and has left limits in $t\in[0,\infty)$ with $A_0(\omega)=0$ and $\lvert
A_t(\omega)\rvert<\infty$ for every $t<\infty$;
\item[(iv)] $A_{t+s}(\omega)=A_t(\omega)+A_s(\Theta_t\omega)$.
\end{enumerate}
If in addition the mapping $t\mapsto A_t(\omega)$ is positive and
continuous for each $\omega\in\Lambda$, then $A$ is a \emph{positive
continuous additive functional} (abbreviated PCAF).
An additive functional admitting a defining set $\Lambda$ with
$\mathbb{P}_{x}(\Lambda)=1$ for all $x\in\overline{D}$ is called an
\emph{additive functional in the strict sense.}
\end{definition}
\begin{definition}
A positive Borel measure $\lambda$ on $\overline{D}$ is called a
\emph{smooth measure} of $\mathbf{X}$ if the following conditions hold:
\begin{enumerate}
\item[(i)] $\lambda$ charges no sets of zero capacity, i.e.,
$\lambda(\mathcal{N})=0$ if $\mathcal{N}$ is $\E$-exceptional;
\item[(ii)] There exists an increasing sequence $\{C_k\}_{k\in\N}$ of
closed sets satisfying $$\lim_{k\rightarrow\infty}\lambda(K\backslash
C_k)=0 \text{ for every compact set }K,$$ such that
$\lambda(C_k)<\infty$ and
$\lambda(\overline{D}\backslash\cup_{k\in\N}C_k)=0$. 
\end{enumerate}
\end{definition}
We can now formulate the well-known \emph{Revuz correspondence}. The
family $\mathcal{A}_c^+$ of all PCAFs of $\mathbf{X}$ and the family
$\mathcal{S}$ of all smooth measures on $\overline{D}$ are in one-to-one
correspondence. In other words, for every $A \in
\mathcal\mathcal{A}_c^+$ there exists a unique $\lambda
\in \mathcal {S}$, and vice versa, that satisfy
\begin{equation}\label{eqn:Revuz}
\lim_{t\rightarrow
0+}\frac{1}{t}\int_{D}\EW_{x}\Big\{\int_0^t\phi(\mathbf{X}_s)\dd 
A_s\Big\}\psi(x)\dd 
x=\int_{\overline{D}}\phi(x)\psi(x)\dd \lambda(x),
\end{equation}
for every non-negative Borel measurable function $\phi$ and
$\gamma$-excessive function $\psi$, $\gamma\geq 0$. Recall that a non-negative
function $\psi$ is called $\gamma$-excessive (with respect to $\X$) if
$\lim_{t\rightarrow 0+}e^{-\gamma t}\EW_{x}\psi(\X_t)= \psi(x)$ for
$x\in\overline{D}$.  By the Lipschitz property of $\partial D$, the
Lebesgue surface measure $\sigma$ on $\partial D$ exists and
it is easy to see that $\sigma$ is a smooth measure of $\X$. Let thus $L$
denote the PCAF of $\mathbf{X}$ whose Revuz measure is given by $\sigma$. 
In analogy to the notion for case of smooth coefficients, see, e.g.,
\cite{Hsu,Papanicolaou,Pardoux}, we call $L$ the \emph{boundary local
time} of the reflecting diffusion process $\mathbf{X}$.

Let us conclude this section by recalling that in the framework of
symmetric Dirichlet spaces, the celebrated Fukushima decomposition and
the corresponding transformation formula, cf.~\cite{Fukushimaetal}, play
in some sense the roles of the Doob-Meyer decomposition and It\^{o}'s
formula for semimartingales:
If $v\in\D(\E)$, then the composite process
$v(\X)=(\Omega,\F,\{v(\X_t),t\geq 0\},\mathbb{P}_{x})$ admits the
following unique decomposition
\begin{equation}\label{eqn:Fukudecomp}
\tilde{v}(\X_t)=\tilde{v}(\X_0)+M_t^{[v]}+N_t^{[v]},\quad\text{for all }
t> 0,\quad \mathbb{P}_{x}\text{-a.s.,}
\end{equation} 
holding for q.e. $x\in\overline{D}$, where $\tilde{v}$ is a
quasi-continuous version of $v$, $M^{[v]}$ is a martingale AF of $\X$
having finite energy and $N^{[v]}$ is a continuous AF of $\X$ having
zero energy. A function $\phi$ which is defined q.e. on $\overline{D}$
is called \emph{quasi-continuous} if for every $\varepsilon>0$ there
is an open subset $O\subset\overline{D}$ with
$\mathrm{Cap}(O)<\varepsilon$ such that $\phi|_{\overline{D}\backslash
O}$ is continuous. It is well-known, that every $v\in\D(\E)$ has a
quasi-continuous version. Recall moreover that the \emph{energy }of an
AF $A$ is defined as
\begin{equation*}
\lim_{t\rightarrow 0+}\frac{1}{2t}\int_{D}\EW_{x}A_t^2\dd  x
\end{equation*}
and that the elements of the set \begin{equation*}\begin{split}\{&A: A\text{ is an AF of }\mathbf{X}
\text{ with }\E\text{-exceptional set }\mathcal{N} \text{ s.t. }\\
& \EW_{x}A_t^2<\infty \text{ for all }t>0
\text{ and
  }\EW_{x}A_t=0\text{ for all }x\in\overline{D}\backslash
  \mathcal{N} \}\end{split}\end{equation*} are called \emph{martingale AFs.}
 
\section{Refinement of the reflecting diffusion process}\label{section3}
In order to refine the diffusion process $\X$ to start from every
$x\in\overline{D}$ without exceptional set we need the well-known
connection between the strongly continuous sub-Markovian semigroup
$\{T_t,t\geq 0\}$ on $L^2(D)$ and the evolution system corresponding to
$(\mathcal{L},\mathcal{D}(\mathcal{L}))$, see, e.g., the monograph
\cite{Pazy}. Namely for every $v_0\in L^2(D)$, the function
$v(t):=T_tv_0$ belongs to the function space $W(0,T;H^1(D),H^{-1}(D))$ 
given by the set
$$\{\phi\in L^2((0,T);H^1(D)):\dot{\phi}\in
L^2((0,t);H^{-1}(D))\}$$ and is the unique solution of the abstract
Cauchy problem 
\begin{equation}\begin{aligned}\label{parabolic}
&\dot{v} + \mathcal{L} v=0 &\ &\text{in }(0,T)\\
&v(0)=v_0.&\ &
\end{aligned}\end{equation}
On the other hand, given (\ref{parabolic}) it is not difficult to verify
that $v$ also satisfies the parabolic equation
\begin{equation}\label{weak}
-\int_0^T\left<v(t),w\right>\dot{\varphi}(t)\dd 
t+\int_0^T\left<\mathcal{L}v(t),w\right>\varphi(t)\dd 
t-\left<v_0,w\right>\varphi(0)=0
\end{equation}
for all $w\in H^1(D)$ and all $\varphi\in C^{\infty}_c([0,T))$.
Moreover it is well-known that $T_t$ is a bounded operator from $L^1(D)$
to $L^{\infty}(D)$ for every $t>0$. Therefore by the Dunford-Pettis
Theorem, it can be represented as an integral operator for every $t>0$,
\begin{equation}
T_t\phi(x)=\int_D p(t,x,y)\phi(y)\dd  y\quad\text{a.e. on }D,\text{ for
every } \phi\in L^1(D),
\end{equation}
where for all $t> 0$ we have $p(t,x,y)\in L^{\infty}(D\times D)$ and
$p(t,x,y)\geq 0$ a.e.. 

The following Theorem generalizes a well-known result for
diffusion processes on $\R^d$ from \cite{Stroock0}.
\begin{theorem}\label{thm:1}
For each fixed $0<t_0\leq T$ there exist positive constants $c_1$
and $c_2$ such that 
\begin{equation}\label{eqn:Hold}
\lvert p(t_2,x_2,y_2)-p(t_1,x_1,y_1)\rvert\leq
c_1(\sqrt{t_2-t_1}+\lvert x_2-x_1\rvert+\lvert y_2-y_1\rvert)^{c_2}
\end{equation}
for all $t_0\leq t_1\leq t_2\leq T$ and all
  $(x_1,y_1),(x_2,y_2)\in\overline{D}\times\overline{D}$.
\end{theorem}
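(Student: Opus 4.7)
The plan is to combine three ingredients: (i) the semigroup property plus ultracontractivity make $p(\cdot,\cdot,y_0)$ a bounded nonnegative weak solution of a parabolic equation with conormal (Neumann) boundary condition; (ii) a De Giorgi--Nash--Moser type regularity theorem, adapted to the boundary on a Lipschitz domain, delivers Hölder continuity in the spatial variable up to $\partial D$; (iii) the symmetry $p(t,x,y)=p(t,y,x)$ and parabolic scaling then upgrade this to the desired joint Hölder continuity in $(t,x,y)$.

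First I would fix $y_0\in\overline{D}$ and $s_0\in(0,t_0/2)$, and define $u(t,x):=p(t+s_0,x,y_0)$. From the Chapman--Kolmogorov identity together with the $L^1\to L^\infty$ boundedness of $T_{s_0}$ (Dunford--Pettis/ultracontractivity), one has $u(t,\cdot)=T_t\bigl[p(s_0,\cdot,y_0)\bigr]$ almost everywhere, with $p(s_0,\cdot,y_0)\in L^\infty(D)\subset L^2(D)$. Consequently, via (\ref{parabolic}) and (\ref{weak}), the function $u$ lies in $W(0,T;H^1(D),H^{-1}(D))$ and is a bounded weak solution of $\dot u+\mathcal{L} u=0$ on $(0,\infty)\times D$. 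The boundary condition built into the domain $\mathcal{D}(\mathcal{L})$ is precisely the homogeneous conormal condition $\kappa\nu\cdot\nabla u=0$ on $\partial D$ in the weak sense, and $\|u(t,\cdot)\|_\infty\leq c(t_0)$ uniformly in $y_0$ and $t\in[t_0/2,T]$.

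Next I would invoke the parabolic De Giorgi--Nash--Moser theory for uniformly elliptic symmetric divergence form operators with merely measurable coefficients, in its boundary-up-to-$\partial D$ version for conormal data on Lipschitz domains. At a boundary point $x_0\in\partial D$ I would flatten $\partial D\cap B(x_0,r_D)$ via a bi-Lipschitz chart, which preserves the class of uniformly elliptic, bounded, symmetric divergence form operators with constants depending only on $(d,c_0,c_D)$, and then extend the resulting coefficients and the solution by even reflection across the flat face. The reflected function is a weak solution of the reflected equation across the interface (the conormal matching across the flat face is exactly the condition for the even extension to remain a weak solution), so the classical interior Hölder estimate applies and, transported back to $D$, yields an oscillation-decay inequality with Hölder exponent $\alpha\in(0,1)$ and constant depending only on $(d,c_0,r_D,c_D,t_0,T)$.

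Finally I would assemble the three variables. The spatial estimate and the symmetry $p(t,x,y)=p(t,y,x)$ give Hölder continuity of order $\alpha$ in $x$ (for fixed $t,y$) and in $y$ (for fixed $t,x$); the temporal increment comes from the parabolic space-time oscillation bound, which yields a Hölder exponent $\alpha/2$ in $t$ by parabolic scaling, whence the factor $\sqrt{t_2-t_1}$ on the right-hand side of (\ref{eqn:Hold}). Interpolating through the intermediate points $(t_1,x_1,y_2)$ and $(t_2,x_1,y_2)$ by the triangle inequality produces (\ref{eqn:Hold}) with $c_2:=\alpha$.

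The main obstacle is the boundary version of the parabolic De Giorgi--Nash--Moser estimate: interior Hölder regularity for measurable coefficients is classical, but its extension up to the boundary with zero conormal flux on a merely Lipschitz (not $C^{1,\alpha}$) domain is delicate. My plan to reduce this to the interior theorem via bi-Lipschitz flattening and even reflection is concrete, but the verification that the reflected operator remains uniformly elliptic, symmetric and bounded in divergence form with constants depending only on $(d,c_0,c_D)$, and that the conormal condition translates into clean weak matching across the flat interface, is where the real technical work sits.
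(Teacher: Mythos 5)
Your proposal is correct and follows essentially the same route as the paper's own proof: a uniform sup bound on $p$ for $t$ bounded away from $0$ (the paper gets it via Nash's inequality and an Aronson-type Gaussian bound, you via ultracontractivity), Nash--De Giorgi--Moser interior H\"older regularity, and at the boundary a bi-Lipschitz flattening followed by even reflection so that the interior estimate applies to the reflected weak solution, with symmetry of $p$, parabolic scaling and Chapman--Kolmogorov/compactness assembling the joint estimate in $(t,x,y)$. The step you flag as the main technical burden (ellipticity and weak matching of the reflected coefficients across the flat face) is exactly the step the paper carries out, so no essential difference remains.
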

\begin{proof}
The main idea of the proof is the following extension by reflection technique, see for example \cite[Section 2.4.3]{Troianiello}:  
We extend the solution of a parabolic problem by reflection at the boundary, then show that 
this extension again solves a parabolic problem so that we can apply a well-known 
interior regularity result. 

First note that Nash's inequality holds for the underlying
Dirichlet space $(\D(\E),\E)$, i.e., there exists a constant $c_1>0$ such
that 
\begin{equation*}
\lvert\lvert v\rvert\rvert^{2+4/d} \leq c_1 (\E(v,v)+\lvert\lvert
v\rvert\rvert^2)\lvert\lvert v\rvert\rvert_{L^1(D)}^{4/d}\quad\text{for
all }v\in H^1(D).
\end{equation*}
This is a direct consequence of the uniform ellipticity
(\ref{eqn:ellipticity}) and \cite[Corollary 2.2]{Bass}, where Nash's
inequality is shown to hold for the Dirichlet space $(H^1(D),\E_0)$ 
with $D$ a bounded Lipschitz domain.
Analogously to the proof of \cite[Theorem 3.1]{Bass}, it follows thus
from \cite[Theorem 3.25]{Carlen} that the
transition kernel density satisfies an Aronson type Gaussian upper bound
\begin{equation}\label{eqn:Gaussian}
p(t,x,y)\leq c_1
t^{-d/2}\exp\Big(-\frac{\lvert x-y\rvert^2}{c_2t}\Big)\quad\text{for
all }t\leq 1\text{ and }(x,y)\in\overline{D}\times\overline{D}.
\end{equation}
In particular $\sup_{0< t\leq 1}\lvert\lvert
p(t,\cdot,\cdot)\rvert\rvert_{\infty}$ is finite and by Nash's interior H\"older
continuity Theorem, cf.~\cite{Nash}, the estimate (\ref{eqn:Hold}) is true for
every $(x_1,y_1)$, $(x_2,y_2)$ with $d(x_i,\partial
D)$, $d(y_i,\partial D)>c_3$, $i=1,2$, for some constant $c_3>0$ and all $t_0\leq t_1\leq
t_2\leq 1$. Note that by the Markov property of the semigroup the
Chapman-Kolmogorov equation holds, i.e.,
$$p(t_1+t_2,x,y)=\int_D p(t_1,x,z)p(t_2,z,y)\dd  z$$ for every $t_1,
t_2$ and a.e. $x,y\in\overline{D}$, in particular for fixed
$y\in\overline{D}$ the function $v:=p(\cdot,\cdot,y)$ is the unique
solution to (\ref{parabolic}) with initial value $v_0:=p(0,\cdot,y)\in
L^2(D)$. 
Now let $z\in\partial D$ so that by the Lipschitz property of $\partial D$ we
have after translation and rotation
$B(z,r_D)\cap\overline{D}=\{(\tilde{x},x_d)\in
B(z,r_D):x_d>\gamma(\tilde{x})\}$ and $B(z,r_D)\cap\partial
D=\{\tilde{x}\in B(z,r_D):x_d=\gamma(\tilde{x})\}$, where we have
introduced the notation $\tilde{x}=(x_1,...,x_{d-1})^T$. 
Let us furthermore introduce the one-to-one transformation
$\Psi(x):=(\tilde{x},x_d-\gamma(\tilde{x}))$ which maps
$B(z,r_D)\cap\overline{D}$ into a subset of the hyperplane
$\{(\tilde{y},0)\}$ and straightens the boundary $B(z,r_D)\cap\partial
D$. $\Psi$ is a bi-Lipschitz transformation and the Jacobians of both
$\Psi$ and $\Psi^{-1}$ are bounded with bounds that depend only on the
Lipschitz constant $c_D$.
Since $v$ is the solution of (\ref{parabolic}) with appropriate initial
condition, the function $\hat{v}:=v(\cdot,\Psi^{-1}(\cdot))$ must
satisfy the following parabolic equation in
$\hat{D}(z,r_D):=\Psi(B(z,r_D)\cap\overline{D})$:
\begin{eqnarray*}
\int_0^T\dot{\varphi}(t)\int_{\hat{D}(z,r_D)}\hat{v}(t)w\dd  x\dd 
t&=&-\sum\limits_{i,j=1}^d\int_0^T\varphi(t)\int_{\hat{D}(z,r_D)}
\hat{\kappa}_{ij}\partial_i
\hat{v}(t)\partial_j w\dd x\dd  t\\
&&-\varphi(0)\int_{\hat{D}(z,r_D)}\hat{v}_0w\dd x
\end{eqnarray*}
for all $w\in C^{\infty}_c(\hat{D}(z,r_D))$ and all $\varphi\in
C^{\infty}_c([0,T))$. The coefficient $\hat{\kappa}$ is obtained via
change of variables and it is bounded and uniformly elliptic by the
boundedness of the Jacobians of $\Psi$ and $\Psi^{-1}$, respectively.
Now we use reflection on the hyperplane $\{(\tilde{y},0)\}$ via the
mapping $\rho(x):=(\tilde{x},-x_d)$ which yields that the function
$\hat{v}(\cdot,\rho(\cdot))$ satisfies a parabolic equation on
$\rho(\hat{D}(z,r_D))$. Summing up both parabolic equations on $
\hat{D}(z,r_D)$ and on $\rho( \hat{D}(z,r_D))$, respectively, we obtain
that the function 
\begin{equation*}
\check{v}(t,x):=\begin{cases}
\hat{v}(t,x),\quad&x\in \hat{D}(z,r_D)\\
\hat{v}(t,\rho(x)),\quad&x\in \rho(\hat{D}(z,r_D))
\end{cases}
\end{equation*}
satisfies a parabolic equation in $(\hat{D}(z,r_D)\cup\rho(\hat{D}(z,r_D))$.
By Nash's interior H\"older estimate
for $\check{v}$, together with the fact that we may choose $c_3=\frac{r_D}{4c_D}$, we obtain thus
\begin{equation*}
\lvert p(t_2,x_2,\Psi^{-1}(y_2))-p(t_1,x_1,\Psi^{-1}(y_1))\rvert\leq
c_1(\sqrt{t_2-t_1}+\lvert y_2-y_1\rvert)^{c_2}
\end{equation*} 
for all $t_0\leq t_1\leq t_2\leq 1$ and $y_1,y_2\in
\{(\tilde{x},x_d):\lvert\tilde{x}\rvert<c_3,\ x_d\in
(0,r_D/4)\}$. As $\Psi$ is bi-Lipschitz, for fixed $x$, the mapping
$(t,y)\mapsto p(t,x,y)$ is H\"older continuous in
$(t_0,1]\times(B(z,c_3)\cap\overline{D})$ and by symmetry of the
transition kernel density the same holds true for the mapping
$(t,x)\mapsto p(t,x,y)$ for fixed $y$.  
Finally, the assertion on
$(t_0,1]\times\overline{D}\times\overline{D}$ follows due to compactness of $\partial D$ 
and its generalization to arbitrary $T>0$ is obtained after
repeatedly applying Chapman-Kolmogorov.  
\end{proof} 
By \cite[Theorem 2]{Fukushima2} the H\"older continuity of the
transition density kernel ensures that we may refine the process $\X$ to
start from every $x\in\overline{D}$ by identifying the strongly
continuous semigroup $\{T_t,t\geq 0\}$ with the transition semigroup
$\{P_t,t\geq 0\}$. Moreover the decomposition (\ref{eqn:Fukudecomp})
holds $\mathbb{P}_{x}$-a.s. for every $x\in\overline{D}$ if $v$ is
continuous and locally in $H^1(D)$ and the energy measure of $M^{[v]}$
given by $$\dd  \mu_{\left<
v\right>}(x):=2\sum_{i,j=1}^d\kappa_{ij}(x)\partial_i
v(x)\partial_jv(x)\dd  x$$ is a smooth measure of $\X$ in the strict sense, that is,
there is an increasing sequence of finely open sets $\{D_k,k\geq 1\}$ so
that $\cup_{k\in\N}D_k=\overline{D}$, $[D_k]\mu_{\left<v\right>}$ is a
finite Borel measure and the $1$-resolvent $G_1([D_k]\mu_{\left<v\right>})$
is bounded for every $k\geq 1$. In this case, both $M^{[v]}$ and
$N^{[v]}$ can be taken to be additive functionals of $\X$ in the strict
sense.

\section{Feynman-Kac type representation formulae}\label{section4}
Let us first state some auxiliary Lemmata. 

\begin{lemma}\label{lem:exponential}
The transition density kernel $p$ approaches the stationary distribution
uniformly and exponentially fast, that is, there exists a $t_0>0$ and a
constant $c_3>0$ such that for all
$(x,y)\in\overline{D}\times\overline{D}$ and every $t\geq t_0$,
\begin{equation}\label{eqn:stationary}
\lvert p(t,x,y)-{\lvert D\rvert}^{-1}\rvert\leq \exp(-c_3t).
\end{equation}
\end{lemma}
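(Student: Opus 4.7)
The plan is to extract the exponential decay from a spectral gap of the self-adjoint generator $\mathcal{L}$ and then upgrade the resulting $L^2$-convergence to uniform convergence by combining Chapman-Kolmogorov with the Aronson-type Gaussian upper bound already obtained in Theorem \ref{thm:1}.

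First I would establish the spectral gap. Since $D$ is a bounded Lipschitz domain, the classical Poincar\'{e} inequality provides a constant $c_P>0$ such that $\|v-\bar v\|^2\leq c_P\int_D|\nabla v|^2\,\mathrm{d}x$ for every $v\in H^1(D)$, where $\bar v:=|D|^{-1}\int_D v\,\mathrm{d}x$. Combined with the uniform ellipticity (\ref{eqn:ellipticity}), this yields $\E(v,v)\geq \lambda_1\|v-\bar v\|^2$ with $\lambda_1:=(c_0 c_P)^{-1}$. Because the diffusion is conservative, $1$ is harmonic, so $T_t 1=1$ and hence $\overline{T_tv}=\bar v$ for every $v\in L^2(D)$. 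Applying the spectral theorem to $\mathcal{L}$ on the orthogonal complement of the constants then gives the $L^2$-decay
\begin{equation*}
\|T_tv-\bar v\|\leq e^{-\lambda_1 t}\|v-\bar v\|,\qquad v\in L^2(D),\ t\geq 0.
\end{equation*}

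Next I would promote this $L^2$-estimate to a pointwise estimate on the kernel by a standard ``two-step'' argument. Fix $t_0>0$ and set $q_y(z):=p(t_0,z,y)$. By symmetry and Chapman-Kolmogorov together with conservativity, $\int_D q_y(z)\,\mathrm{d}z=1$ and $\|q_y\|^2=p(2t_0,y,y)$, and the latter is uniformly bounded in $y\in\overline{D}$ by the Aronson upper bound (\ref{eqn:Gaussian}). Using the identity $|D|^{-1}=\int_D p(t_0,x,z)|D|^{-1}\,\mathrm{d}z$, the Chapman-Kolmogorov relation, and the symmetry of $p$, one can write for $t\geq 2t_0$
\begin{equation*}
p(t,x,y)-|D|^{-1}=\int_D p(t_0,x,z)\bigl(T_{t-2t_0}(q_y-|D|^{-1})\bigr)(z)\,\mathrm{d}z,
\end{equation*}
so the Cauchy-Schwarz inequality and the $L^2$-decay above give
\begin{equation*}
|p(t,x,y)-|D|^{-1}|\leq \|p(t_0,x,\cdot)\|\cdot e^{-\lambda_1(t-2t_0)}\|q_y-|D|^{-1}\|\leq C e^{-\lambda_1 t},
\end{equation*}
uniformly in $(x,y)\in\overline{D}\times\overline{D}$. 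Choosing $t_0$ large enough that $C\leq e^{(\lambda_1-c_3)t_0}$ for some $c_3\in(0,\lambda_1)$, the inequality (\ref{eqn:stationary}) follows for all $t\geq t_0$.

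I do not expect genuine obstacles here: the whole argument rests on two facts that are essentially already in the paper, namely the Poincar\'{e}/ellipticity-based spectral gap and the on-diagonal Gaussian bound from the proof of Theorem \ref{thm:1}. The only point that requires a moment of care is the use of conservativity together with symmetry to ensure that $q_y$ has mean $|D|^{-1}$ so that the spectral gap decay actually applies; everything else is a bookkeeping of constants.
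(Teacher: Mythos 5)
Your proof is correct, and it takes a slightly different route from the paper. The paper diagonalizes the Neumann-type eigenvalue problem for $\nabla\cdot(\kappa\nabla\cdot)$, writes $p(t,x,y)=\lvert D\rvert^{-1}+\sum_{j\geq1}e^{-\lambda_j t}\phi_j(x)\phi_j(y)$ (the eigenfunctions being continuous by Theorem \ref{thm:1}), and then extracts the uniform exponential bound from this expansion together with the Aronson bound (\ref{eqn:Gaussian}), following Bass--Hsu. You avoid the eigenfunction expansion altogether: the Poincar\'e inequality plus ellipticity gives the spectral gap $\E(v,v)\geq\lambda_1\lVert v-\bar v\rVert^2$, conservativity and symmetry give invariance of the mean-zero subspace, and the two-step Chapman--Kolmogorov/Cauchy--Schwarz argument upgrades the resulting $L^2$-decay to a kernel estimate using only the on-diagonal bound $\lVert p(t_0,x,\cdot)\rVert^2=p(2t_0,x,x)$. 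This is essentially the ``expansion-free'' version of the same mechanism; what it buys is that you never need discreteness of the spectrum or continuity of eigenfunctions, only the gap and ultracontractivity, so the argument is marginally more robust, while the paper's version makes the rate $\lambda_1$ visible as the first nonzero Neumann eigenvalue. One small bookkeeping slip at the end: your constant $C$ contains the factor $e^{2\lambda_1 t_0}\sup_x\lVert p(t_0,x,\cdot)\rVert\sup_y\lVert q_y-\lvert D\rvert^{-1}\rVert$, so ``choosing $t_0$ large'' does not make $C\leq e^{(\lambda_1-c_3)t_0}$; instead keep the smoothing time fixed (say $t_0=1/2$, so that (\ref{eqn:Gaussian}) applies to $p(2t_0,x,x)$), obtain $\lvert p(t,x,y)-\lvert D\rvert^{-1}\rvert\leq Ce^{-\lambda_1 t}$ for $t\geq 1$, and then take $c_3\in(0,\lambda_1)$ and enlarge only the threshold time in the statement of the lemma to absorb $C$. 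With that trivial repair the argument is complete.
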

\begin{proof}
It is well-known that there exists an orthonormal basis 
$\{\phi_j:j\in\mathbb{N}\}$ of $L^2(D)$ and an increasing sequence of 
constants $(\lambda_j)_{j\in\N}$ such that $\lambda_0 = 0$, 
$\lambda_1 > 0$ and the functions $\phi_j$ are the weak solutions
of the eigenvalue problem
  \[
  \begin{cases}
    - \nabla \cdot (\kappa \nabla \phi_j) = \lambda_j \phi_j &
    \text{in }D,\\
    \kappa\nu \cdot \nabla \phi_j = 0 & \text{on }\partial D.\\
  \end{cases}
  \]
Using Theorem \ref{thm:1} it is not difficult to see 
that the eigenfunctions satisfy $\phi_j \in
 C(\overline D)$ and for every $t > 0$
 and $x,y \in \overline D$ we have
 \[
 p(t,x,y)=\lvert D\rvert^{-1}+\sum_{j=1}^{\infty} e^{-\lambda_j t}
 \phi_j(x)\phi_j(y)
 \]
by the fact that for every $v_0 \in L^2(D)$, the function $v(t)$ 
given by
  \[
  v(t) = \sum_{j=0}^\infty e^{-\lambda_j t} \left<v_0,\phi_j\right>\phi_j
  \]
is the solution of the abstract Cauchy problem~\eqref{parabolic}.
Using this eigenexpansion, one can deduce the assertion in a
straightforward manner from the Aronson type Gaussian bound
(\ref{eqn:Gaussian}), cf.~\cite{Bass} for a proof when $\X$ is the
reflecting Brownian motion. 
\end{proof}

\begin{lemma}
The set 
\begin{equation}\label{eqn:test}
V(D):=\{\phi : \phi\in C^2(D),\partial_{\nu}\phi=0\text{ a.e. on }\partial D \}
\end{equation}
is dense in $H^1(D)$. 
\end{lemma}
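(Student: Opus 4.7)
My plan combines the classical density of $C^\infty(\overline D) \cap H^1(D)$ in $H^1(D)$, which holds for every bounded Lipschitz domain (e.g.\ by Stein extension followed by mollification), with an explicit boundary correction: given a smooth approximant $w \in C^\infty(\overline D)$, I subtract a function $c_{\varepsilon} \in C^2(D)$ supported in a thin tubular strip $T_{\varepsilon} := \{x \in D : d(x,\partial D) < \varepsilon\}$ chosen so that $w - c_{\varepsilon}$ has vanishing outward normal derivative on $\partial D$ while $\|c_{\varepsilon}\|_{H^1(D)} = O(\sqrt{\varepsilon})$. Combining these two steps delivers an element of $V(D)$ arbitrarily close to any prescribed $v \in H^1(D)$.

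The correction is built locally via a partition of unity subordinate to a finite cover of $\partial D$ by Lipschitz coordinate balls $B(z, r_D)$ of the type introduced in Section~\ref{section2}. In each such ball, after rotation and translation, $\partial D$ is the graph $\{x_d = \gamma(\tilde x)\}$ with $D$ lying above, so by Rademacher's theorem the exterior unit normal $\nu = (\nabla\gamma, -1)/\sqrt{1 + |\nabla\gamma|^2}$ is defined $\sigma$-a.e. Setting $F(x) := x_d - \gamma(\tilde x)$, $g(\tilde x) := -\partial_\nu w(\tilde x, \gamma(\tilde x))/\sqrt{1 + |\nabla\gamma(\tilde x)|^2}$, and fixing a smooth cutoff $\eta\colon [0, \infty) \to [0,1]$ with $\eta(0) = 1$ and $\eta(s) = 0$ for $s \geq 1$, a direct product-rule computation shows that the local building block
\begin{equation*}
c_{\varepsilon}^{\mathrm{loc}}(x) := F(x)\,\eta(F(x)/\varepsilon)\,g(\tilde x)
\end{equation*}
vanishes on $\partial D$, has gradient $g \cdot (-\nabla\gamma, 1)$ there, and consequently satisfies $\partial_\nu c_{\varepsilon}^{\mathrm{loc}} = \partial_\nu w$ a.e.\ on the corresponding piece of $\partial D$. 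Since the support has volume $O(\varepsilon)$ with $|c_{\varepsilon}^{\mathrm{loc}}| = O(\varepsilon)$ and $|\nabla c_{\varepsilon}^{\mathrm{loc}}| = O(1)$ there, one obtains the estimate $\|c_{\varepsilon}^{\mathrm{loc}}\|_{H^1(D)} = O(\sqrt{\varepsilon})$.

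The main obstacle is producing $c_{\varepsilon}$ of class $C^2(D)$, since $\gamma$ is only Lipschitz and hence $F$, $g$ are not smooth. I would resolve this by replacing $\gamma$ and $g$ with tangential mollifications $\gamma^\delta, g^\delta \in C^\infty$ at a scale $\delta \ll \varepsilon^2$; the resulting block lies in $C^\infty(D)$, but the identity $\partial_\nu c_{\varepsilon} = \partial_\nu w$ then holds only up to a residual of order $\delta/\varepsilon$ stemming from the $O(\delta)$-displacement between the smoothed and the true graphs. One absorbs this residual by iterating the correction as a Neumann-type series: each additional step kills the current residual normal trace while contributing only geometrically smaller $H^1$-mass, and summing the contributions produces an element $\phi \in V(D)$ with $\|w - \phi\|_{H^1(D)} = O(\sqrt{\varepsilon}) \to 0$ as $\varepsilon \to 0$.
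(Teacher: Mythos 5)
Your first two steps (density of $C^\infty(\overline D)$ in $H^1(D)$ and the formal computation showing $\partial_\nu\bigl(w-F\,\eta(F/\varepsilon)\,g\bigr)=0$ with $\|c_\varepsilon\|_{H^1}=O(\sqrt{\varepsilon})$) are fine as far as they go, but the step you yourself identify as the main obstacle is where the argument genuinely breaks, and the proposed fix does not repair it. Mollifying $\gamma$ and $g$ at scale $\delta$ controls $\gamma^\delta-\gamma=O(\delta)$ uniformly, but it does \emph{not} control $\nabla\gamma^\delta-\nabla\gamma$ uniformly: for a general Lipschitz graph the gradient is merely an $L^\infty$ function, $\nabla\gamma^\delta\to\nabla\gamma$ only a.e.\ and in $L^p$, with no rate and with an $O(1)$ pointwise discrepancy on sets of positive measure no matter how small $\delta$ is. Since the identity $\partial_\nu c_\varepsilon=\partial_\nu w$ depends pointwise a.e.\ on $\nabla\gamma$ through $\nu$, the claimed residual of order $\delta/\varepsilon$ is unfounded; the residual normal trace is in general of order one in $L^\infty$ (and not small in $L^2$ with any rate you can quantify). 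Worse, with your coupling $\delta\ll\varepsilon^2$ the mollified amplitude satisfies only $|\nabla g^\delta|\lesssim\delta^{-1}$, so the term $F\,\eta(F/\varepsilon)\,\nabla g^\delta$ contributes roughly $\varepsilon^{3/2}/\delta\gg\varepsilon^{-1/2}$ to $\|\nabla c_\varepsilon\|_{L^2}$, destroying the $O(\sqrt{\varepsilon})$ bound that the whole scheme rests on. Finally, the ``Neumann-type series'' is not an argument: at each stage you must again produce a function in $C^2(D)$ with a \emph{prescribed, merely bounded measurable} Neumann trace on a Lipschitz boundary and small $H^1$ norm, which is essentially the lemma itself, and no mechanism is offered for why the successive $H^1$ contributions decay geometrically.

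For contrast, the paper avoids boundary-layer constructions altogether: it diagonalizes the Neumann Laplacian on $D$, notes that each eigenfunction lies in $V(D)$ (interior elliptic regularity gives smoothness in $D$, and the Neumann condition is attained a.e.), and shows by an orthogonality argument that a $\psi\in H^1(D)$ with $\langle\psi,\phi_j\rangle_{H^1(D)}=0$ for all $j$ must vanish, so the span of the eigenfunctions is dense in $H^1(D)$. That spectral route is what lets the result hold on a merely Lipschitz boundary, precisely the regime where your explicit corrector cannot be made $C^2$ without losing the exact normal-derivative cancellation. If you want to salvage a constructive proof, you would need to replace the explicit strip corrector by the solution of an auxiliary Neumann boundary value problem (i.e.\ invoke solvability and regularity theory for the Neumann problem in Lipschitz domains), at which point the spectral argument is both shorter and more elementary.
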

\begin{proof}
Diagonalizing the Neumann 
Laplacian on $D$ we obtain an orthonormal basis
$\{\phi_j,j\in\N\}$ of $L^2(D)$ and an increasing sequence
$(\lambda_j)_{j\in\N}$ of real positive numbers which tend to infinity
such that for every $j\in\N$, $\phi_j\in H^1(D)$ is a weak solution of
the eigenvalue problem for the Neumann Laplacian. 
Let now $\psi\in H^1(D)$ such that $\left<\psi,\phi_j\right>_{H^1(D)}=0$ 
for every $j\in\N$, then 
\begin{equation*}
\int_D\nabla\phi_j\cdot\nabla\psi\dd  x
+\int_D\phi_j\psi\dd  x=\lambda_j\int_D\phi_j\psi\dd x+\int_D\phi_j\psi\dd  x
= 0,\quad\text{for all }j\in\N.
\end{equation*} 
The fact that $\{\phi_j,j\in\N\}$ is an orthonormal basis of $L^2(D)$
implies $\psi\equiv 0$ which proves the assertion.
\end{proof}

\begin{lemma}\label{lem:smooth}
The boundary local time $L$ of $\X$
corresponding to the surface measure $\sigma$ exists as PCAF in the
strict sense.
\end{lemma}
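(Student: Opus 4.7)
The plan is to verify the criterion for $\sigma$ to be a smooth measure \emph{in the strict sense} that was recorded in the paragraph following Theorem \ref{thm:1}: it suffices to exhibit an increasing nest of finely open sets $\{D_k\}$ with $\cup_k D_k = \overline{D}$ such that $[D_k]\sigma$ is a finite Borel measure and $G_1([D_k]\sigma)$ is bounded for every $k$. Since $\sigma(\partial D) < \infty$ and $\overline{D}$ is itself finely open, the trivial choice $D_k = \overline{D}$ is admissible and reduces the whole task to proving that the $1$-potential $u := G_1\sigma$ is a bounded function on $\overline{D}$.

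First I would observe that, by the continuous trace embedding $H^1(D) \hookrightarrow L^2(\partial D)$ for bounded Lipschitz domains together with uniform ellipticity (\ref{eqn:ellipticity}), $\sigma$ is a positive Radon measure of finite $\E_1$-energy integral:
$$\Big|\int_{\partial D}\tilde v\,\dd\sigma\Big|\leq \sigma(\partial D)^{1/2}\,\lvert\lvert v\rvert\rvert_{L^2(\partial D)}\leq C\,\lvert\lvert v\rvert\rvert_{\E_1},\qquad v\in H^1(D).$$
By the Riesz representation theorem, $u = G_1\sigma$ is therefore the unique element of $H^1(D)$ satisfying $\E_1(u,v) = \int_{\partial D}\tilde v\,\dd\sigma$ for all $v\in H^1(D)$; equivalently, $u$ is the weak solution of the Robin problem $-\nabla\cdot(\kappa\nabla u) + u = 0$ in $D$ with $\kappa\nu\cdot\nabla u = 1$ on $\partial D$.

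The decisive step is to bound $u$ uniformly on $\overline{D}$. I would use Stampacchia's truncation method: testing the weak formulation against $v = (u-k)_+ \in H^1(D)$ for $k\geq 0$, uniform ellipticity controls the left-hand side from below by a constant times $\lvert\lvert(u-k)_+\rvert\rvert_{H^1(D)}^2$, while the right-hand side is bounded by $\sigma(\partial D)^{1/2}\lvert\lvert(u-k)_+\rvert\rvert_{L^2(\partial D)}$ and in turn, via the trace inequality, by $c\,\lvert\lvert(u-k)_+\rvert\rvert_{H^1(D)}$. A standard Stampacchia iteration on the measure of the super-level sets $\{u>k\}$ then yields $\lvert\lvert u\rvert\rvert_{L^\infty(D)} \leq C$; the symmetric argument with $(u+k)_-$ produces a lower bound. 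An appealing alternative, which also delivers H\"older continuity of $u$ up to $\partial D$, is to adapt the extension-by-reflection scheme from the proof of Theorem \ref{thm:1} to the elliptic resolvent equation and then invoke Nash's interior estimate.

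Once $G_1\sigma$ has been shown to be bounded on $\overline{D}$, $\sigma$ is a smooth measure in the strict sense, and the Revuz correspondence in the strict sense produces the claimed PCAF $L$ of $\X$ whose defining set $\Lambda$ satisfies $\mathbb{P}_x(\Lambda)=1$ for every $x\in\overline{D}$; here the refinement carried out in Section \ref{section3} is essential, since it is what allows $\X$ to start from every boundary point without an exceptional set. The main obstacle is precisely the global $L^\infty$-bound for $u$: for anisotropic, merely measurable $\kappa$ on a Lipschitz domain with a Robin-type boundary condition, such estimates are not as readily quotable from standard references as the Dirichlet or pure Neumann analogues, so the truncation (or reflection) argument must be executed directly.
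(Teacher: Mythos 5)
Your reduction is the same as the paper's: both proofs invoke the criterion behind \cite[Theorem 5.1.6]{Fukushimaetal}, so that everything hinges on showing $\sigma(\partial D)<\infty$ (trivial) and that the $1$-potential $G_1\sigma$ is bounded on $\overline{D}$. Where you genuinely diverge is in how that boundedness is obtained. The paper argues probabilistically with the transition kernel: it splits $\int_0^\infty p(t,x,y)e^{-t}\dd t\dd\sigma(y)$ into a tail handled by Lemma \ref{lem:exponential} and a short-time part handled by the Aronson bound (\ref{eqn:Gaussian}), reducing the boundary integral near $x$ by a bi-Lipschitz straightening to a Gaussian integral over a $(d-1)$-dimensional ball, which produces the integrable singularity $t^{-1/2}$; this route reuses Theorem \ref{thm:1} and gives pointwise control for \emph{every} $x\in\overline{D}$ directly. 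You instead identify $U_1\sigma$ variationally as the $H^1(D)$ weak solution of $\E_1(u,v)=\int_{\partial D}\tilde v\dd\sigma$ and prove an $L^\infty$ bound by truncation; this is a legitimate, more PDE-flavoured alternative, but two details must be done carefully. First, the estimate as you displayed it, with the factor $\sigma(\partial D)^{1/2}$ and the $L^2(\partial D)$ trace, only yields $\lVert(u-k)_+\rVert_{H^1}\leq C$ uniformly in $k$, which does not close the iteration: you must use H\"older with the superlevel-set measure $\sigma(\{u>k\}\cap\partial D)$ and the trace--Sobolev embedding $H^1(D)\hookrightarrow L^{q}(\partial D)$ with $q=2(d-1)/(d-2)$ (any $q<\infty$ if $d=2$), giving $\sigma(A_h)\leq (C/(h-k))^{q}\sigma(A_k)^{q-1}$ with exponent $q-1>1$; once $u$ is bounded $\sigma$-a.e.\ on $\partial D$, testing with $(u-k)_+$ for $k$ above that bound kills the right-hand side and gives the interior bound, while positivity of $\sigma$ gives $u\geq 0$ for free. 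Second, your argument bounds the variational potential a.e., so you still need the (routine) identification of $U_1\sigma$ with the kernel potential and the a.e.-to-q.e.\ upgrade via quasi-continuity before feeding it into the strict-sense criterion \mdash\ a step the paper sidesteps because its kernel estimates are pointwise everywhere from the start. With those points executed, your proof is correct; what the paper's approach buys is economy (it recycles the already-proved kernel bounds and needs no elliptic regularity machinery), while yours is independent of the Gaussian bound and of Lemma \ref{lem:exponential}.
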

\begin{proof}
  We have to show that the surface measure $\sigma$ of $\partial D$ has
  a bounded $1$-potential, then the assertion follows immediately from
  \cite[Theorem 5.1.6]{Fukushimaetal}.  Since we have a continuous
  transition density kernel $p$ for every $x \in \overline{D}$ and $t >
  0$, we know that the $1$-potential coincides with the $1$-resolvent
  $G_1\sigma$ of the measure $\sigma$ given by
  \[
  G_1 \sigma(x) = \int_{\partial D} \int_0^\infty
  p(t,x,y) e^{-t}\dd {t}\dd \sigma(y).
  \]
  We need to show that this is uniformly bounded. According to
  Lemma~\ref{lem:exponential}, the transition density kernel $p$ is
  uniformly bounded for every $t \ge t_0$. This together with H{\"o}lder
  continuity implies
  \[
  c_6 := \sup_{x \in \overline{D}} \int_{\partial D} \int_1^\infty
  p(t,x,y) e^{-t}\dd {t}\dd \sigma(y) < \infty.
  \]
  Therefore, it is enough to show that
  \[
  \sup_{x \in \overline{D}} \int_{\partial D} \int_0^1 p(t,x,y) \dd {t}\dd \sigma(y)
  < \infty.
  \]
  
  For every $\rho > 0$ and every $x \in D$
  such that $d(x, \partial D) \ge \rho$
  the Gaussian upper bound for the density $p$ gives
  \[
  G_1 \sigma(x) \le c_6 + c_3 \int_0^1 t^{-d/2} e^{-\rho^2 /2t} \dd {t}.
  \]
  It is straightforward to show that the integrand on the right-hand side
  has an upper bound $c_4([d \ge 3] \rho^{2-d} + [d = 2] \log \rho^{-1})$.
  Hence for every fixed $\rho > 0$ we have a uniform upper bound
  $c_7(\rho)$ for the $1$-resolvent.

  Since the boundary $\partial D$ is compact we can find $\rho > 0$ and
  a finite number of balls $B(x_k, 2\rho)$ with centers $x_k \in
  \partial D$ that cover the set $\{ x \in \overline{D} : d(x, \partial
  D) < \rho \}$. Moreover the $\rho$ can be taken so small that there exist
  bi-Lipschitz homeomorphisms between these balls and the subsets of
  $\R^d$ straightening the boundary as in the proof of
  Theorem~\ref{thm:1}.

  By the first part of the proof, we may assume $x \in B(x_k, 2\rho)$ for some $k$.
  For every $l$ such that $x \notin B(x_l,2\rho)$ the
  previous estimate holds and we have
  \[
  \sup_{x \notin B(x_l,2\rho)} \int_0^1\int_{\partial D} [y \in B(x_l,2\rho)] p(t,x,y) \dd \sigma(y)
  \dd t
  \le c_7(\rho).
  \]
  Hence it is enough to show that the contribution coming from the
  integration over those balls $B(x_l,2\rho)$ that contain $x$ is
  finite.

  When $x \in B_l := B(x_l, 2\rho)$ we use the Gaussian upper bound for a fixed
  $t \le 1$ and Lipschitz change of coordinates estimate
  $\sigma(\Psi E) \le c_D^{d-1} \sigma (E)$ of the
  $(d-1)$-dimensional surface measure $\sigma$. Therefore,
  \[
  \int_{\partial D}[y \in B_l]e^{-\lvert x-y\rvert/2t} \dd \sigma(y)
  \le c_D^{d-1} \int_{\R^{d-1}} [y \in \tilde{B}_l] e^{-\lvert x^* -
  y^*\rvert/2t} \dd y
  \]
  where $\tilde B_l := B(x^*_l, 4 c_D \rho) \subset \R^{d-1}$ is a
  $(d-1)$-dimensional ball of radius $4 c_D \rho$
  and $x^*$
  (similarly $x^*_l$ and $y^*$) is the point $x$ in the new coordinate system. 
  The integrand is
  maximized if we move $x^*$ into the center $x^*_l$ of $\tilde{B}_l$. Therefore, we can
  estimate
  \[
    \sup_{x \in B(x_l,2\rho)}\int_{\partial D} [y \in B_l] t^{-d/2} e^{-\lvert x-y\rvert^2/2t} \dd \sigma(y)
  \le c_8
  \int_0^\infty t^{-d/2}r^{d-2} e^{-r^2/2t} \dd {r}.
  \]
  Again this is straightforward to estimate and we see that integrand
  on the right-hand 
  side has an upper bound $c_5t^{-1/2}$. Since $t^{-1/2}$ is
  integrable at zero, the claim follows.
\end{proof}

\begin{lemma}\label{lem:occ}
For every $x\in\overline{D}$ and every bounded Borel measurable
function $\phi$ on $\partial D$ the following occupation formula holds:
 \begin{equation}\label{eqn:occ}
\EW_{x}\int_0^t \phi(\X_s)\dd  L_t=\int_0^t\int_{\partial
D}p(s,x,y)\phi(y)\dd \sigma(y)\dd  s\quad\text{for all }t\geq 0.
\end{equation}
\end{lemma}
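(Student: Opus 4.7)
The strategy is to obtain \eqref{eqn:occ} first for quasi every $x \in \overline{D}$ as a standard corollary of the Revuz correspondence, and then to promote it to every $x \in \overline{D}$ by showing that both sides are continuous in $x$ and invoking density of the complement of an $\E$-exceptional set.

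\emph{First step: quasi everywhere.} By Lemma \ref{lem:smooth}, the boundary local time $L$ exists as a PCAF of $\X$ in the strict sense, with Revuz measure $\sigma$ supported on $\partial D$. Starting from the case $\phi\equiv 1$ of the Revuz correspondence \eqref{eqn:Revuz}, applying the Markov property, and extending from indicator functions to bounded Borel $\phi$ via a monotone class argument (together with linearity for signed $\phi$), one obtains
\begin{equation*}
F_{\phi}(t,x) := \EW_{x}\int_0^t \phi(\X_s)\dd L_s = \int_0^t\int_{\partial D} p(s,x,y)\phi(y)\dd\sigma(y)\dd s =: G_{\phi}(t,x)
\end{equation*}
for every $t \geq 0$ and q.e.\ $x\in\overline{D}$.

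\emph{Second step: extension to every $x\in\overline{D}$.} I would show that both $F_{\phi}(t,\cdot)$ and $G_{\phi}(t,\cdot)$ are continuous on $\overline{D}$. Continuity of $G_{\phi}$ in $x$ is immediate from Theorem \ref{thm:1} (joint H\"older continuity of $p$ up to the boundary) combined with the uniform bound $\sup_{x\in\overline D}\int_0^t\int_{\partial D}p(s,x,y)\dd\sigma(y)\dd s<\infty$ extracted from the proof of Lemma \ref{lem:smooth}, via dominated convergence. For $F_{\phi}$, I would use that the refined $\X$ is well-defined from every $x\in\overline{D}$ and $L$ is a PCAF in the strict sense: approximate $A^{\phi}_t:=\int_0^t\phi(\X_s)\dd L_s$ by Riemann--Stieltjes sums $S_n=\sum_{k}\phi(\X_{t_k^n})(L_{t_{k+1}^n}-L_{t_k^n})$, apply the Markov property to rewrite $\EW_x S_n=\sum_k \int_D p(t_k^n,x,y)\phi(y)\EW_y L_{t_{k+1}^n-t_k^n}\dd y$, and observe that each summand is continuous in $x$ (the map $y\mapsto \EW_y L_r$ being itself continuous by bootstrapping $(\ast)$ with $\phi\equiv 1$, using the uniform bound and Theorem \ref{thm:1}). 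The sums converge uniformly in $x$ thanks to the same bound on $\sup_x\EW_x L_t$, yielding continuity of $F_{\phi}(t,\cdot)$. Since an $\E$-exceptional set has capacity zero and therefore empty interior, its complement is dense in $\overline{D}$, so two continuous functions that agree there must coincide on all of $\overline{D}$, giving \eqref{eqn:occ}.

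\emph{Main obstacle.} The delicate step is establishing the continuity of $x\mapsto\EW_x L_t$ (and then of $F_{\phi}(t,\cdot)$) on all of $\overline{D}$, including boundary points. This is precisely where Theorem \ref{thm:1} and Lemma \ref{lem:smooth} are both indispensable: H\"older continuity of $p$ up to $\partial D$ gives the strong Feller property, while the strict-sense property of $L$ together with the bounded $1$-potential ensures that the relevant expectations are finite and approximable. Without either ingredient, the equality would be available only quasi everywhere, and the extension to boundary starting points would fail.
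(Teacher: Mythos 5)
Your first step is exactly the paper's: Lemma \ref{lem:smooth} plus the equivalent formulation of the Revuz correspondence (Theorem 5.1.3 in Fukushima--\={O}shima--Takeda) yields the identity for Lebesgue-a.e.\ $x\in\overline{D}$, and the continuity of $G_{\phi}(t,\cdot)$ that you sketch (uniform smallness of the small-time contribution from the estimates in Lemma \ref{lem:smooth}, plus Theorem \ref{thm:1} on $[\delta,t]$) is sound. The gaps are in your treatment of $F_{\phi}(t,\cdot)$. First, the claimed continuity of $y\mapsto\EW_{y}L_r$ ``by bootstrapping the step-one identity with $\phi\equiv 1$'' is circular: at that stage the identity is known only a.e., and a function that agrees a.e.\ with a continuous function need not be continuous; continuity of $\EW_{\cdot}L_r$ on all of $\overline{D}$ is essentially an instance of the statement you are trying to prove. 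Second, for a merely bounded Borel $\phi$ the Riemann--Stieltjes sums $S_n$ need not converge pathwise to $\int_0^t\phi(\X_s)\dd L_s$ (the map $s\mapsto\phi(\X_s)$ can fail to be Riemann integrable against $\dd L_s$), so you would have to restrict to continuous $\phi$ first and extend afterwards by a Riesz/monotone class argument. Third, the asserted uniform-in-$x$ convergence of $\EW_x S_n$ does not follow from $\sup_x\EW_xL_t<\infty$ alone: one needs a uniform bound on $\EW_x\int_0^t\omega\bigl(\lvert\X_{\eta_n(s)}-\X_s\rvert\bigr)\dd L_s$ with $\omega$ a modulus of continuity of the (extended) $\phi$, and the $k=0$ summand is $\phi(x)\EW_xL_{t_1^n}$, not an integral against $p(t_0^n,x,\cdot)$, so it is not continuous for Borel $\phi$ and must be shown to vanish uniformly as the mesh tends to zero. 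None of this is supplied.

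The repair essentially collapses into the paper's own argument, which avoids continuity of $F_{\phi}$ altogether: for $t_0>0$ integrate the a.e.\ identity against the density $p(t_0,x,\cdot)$ \textemdash\ legitimate precisely because only a Lebesgue-a.e.\ statement is placed under a Lebesgue integral \textemdash\ and use the Markov property together with Chapman--Kolmogorov to get, for every $x\in\overline{D}$,
\begin{equation*}
\int_D p(t_0,x,y)\,\EW_{y}\int_0^{t-t_0}\phi(\X_s)\dd L_s\dd y
=\EW_{x}\int_{t_0}^{t}\phi(\X_s)\dd L_s
=\int_{t_0}^{t}\int_{\partial D}\phi(z)p(s,x,z)\dd\sigma(z)\dd s,
\end{equation*}
then let $t_0\rightarrow 0$ and apply dominated (or monotone) convergence. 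If you insist on your continuity-plus-density scheme, the same device \textemdash\ substituting the a.e.\ identity only under the integrals $\int_D p(t_k^n,x,y)\cdots\dd y$ with $t_k^n>0$ \textemdash\ removes the circularity, but you would still have to resolve the Riemann-sum and uniformity issues above, so the direct time-shift argument is both shorter and complete.
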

\begin{proof}
By Lemma \ref{lem:smooth} the boundary local time of $\X$ exists as a
PCAF in the strict sense. 
Without loss of generality we may assume that $\phi$ is non-negative. It
follows from \cite[Theorem 5.1.3]{Fukushimaetal} that the Revuz
correspondence (\ref{eqn:Revuz}) is equivalent to 
\begin{equation*}
\EW_{\psi\cdot\dd  x}(\phi\cdot
L)_t=\int_0^t\left<\phi\cdot\sigma,T_s\psi\right>\dd  s
\end{equation*}
for every $t>0$ and all non-negative Borel measurable functions $\psi$
  and $\phi$. That is,
\begin{eqnarray*}
\int_D\psi(x)\EW_{x}\int_0^t\phi(\X_s)\dd  L_s\dd 
x&=&\int_0^t\int_{\partial D}\phi(y)T_s\psi(y)\dd \sigma(y)\dd  s\\
&=&\int_D \psi(x)\int_0^t\int_{\partial
D}\phi(y)p(s,y,x)\dd \sigma(y)\dd  s\dd x,
\end{eqnarray*} 
where we have used Fubini's Theorem. As this holds for every
non-negative Borel measurable function $\psi$, we may deduce
\begin{equation*}
\EW_{x}\int_0^t\phi(\X_s)\dd  L_s= \int_0^t\int_{\partial D}
\phi(y)p(s,x,y)\dd \sigma(y)\dd  s\quad \text{a.e. in }\overline{D}.
\end{equation*}
To obtain the assertion everywhere in $\overline{D}$ consider for
$t_0>0$ the integral
\begin{eqnarray*}
\int_Dp(t_0,x,y)\EW_{y}\int_0^{T}\phi(\X_s)\dd  L_s\dd  y,
\end{eqnarray*}
where we have set $T:=t-t_0$. Note that by the Markov property of $\X$
we may write this integral equivalently as
\begin{equation*}
\EW_{x}\int_{t_0}^t\phi(\X_s)\dd 
L_s=\int_{t_0}^t\int_{\partial D} \phi(z)p(s,x,z)\dd \sigma(z)\dd 
s\quad\text{for every }x\in\overline{D}.
\end{equation*}
Now letting $t_0\rightarrow 0$ and using the Dominated Convergence
Theorem yields the assertion.
\end{proof}

\subsection{Continuum model}
The main result for the continuum model (\ref{eqn:con}),
(\ref{eqn:continuum}) is the following Theorem.
\begin{theorem}\label{thm:cont}
Let $\kappa$ satisfy (A1) and let $f\in L^2_{\diamond}(\partial D)$ be bounded.
Then there is a unique weak solution $u\in C(\overline{D})\cap H^1_{\diamond}(D)$ to the
boundary value problem (\ref{eqn:con}), (\ref{eqn:continuum}). This solution admits the
Feynman-Kac type representation
\begin{equation}
u(x)=\lim_{t\rightarrow\infty}\EW_{x}\int_0^t f(\X_s)\dd 
L_s\quad\text{for all }x\in\overline{D}.
\end{equation}  
\end{theorem}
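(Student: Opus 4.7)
The plan is to combine Lax-Milgram with a resolvent regularization drawn from Dirichlet form potential theory. First I would establish existence and uniqueness of a weak solution $u\in H^1_\diamond(D)$: the form $\E$ is coercive on $H^1_\diamond(D)$ thanks to ellipticity (\ref{eqn:ellipticity}) and the Poincar\'e inequality for zero-mean functions, while the linear functional $v\mapsto\int_{\partial D}fv\,\dd\sigma$ is continuous on $H^1(D)$ by the trace theorem on Lipschitz domains. Lax-Milgram then yields a unique $u\in H^1_\diamond(D)$ with $\E(u,v)=\int_{\partial D}fv\,\dd\sigma$ for all $v\in H^1(D)$, both sides vanishing on constant test functions thanks to the mean-zero condition $\int_{\partial D}f\,\dd\sigma=0$, so the restriction to $H^1_\diamond(D)$ is equivalent.

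Second, I would apply Lemma~\ref{lem:occ} to write $\EW_x\int_0^t f(\X_s)\,\dd L_s=\int_0^t\int_{\partial D}p(s,x,y)f(y)\,\dd\sigma(y)\,\dd s$. Since $\int_{\partial D}f\,\dd\sigma=0$, the inner integral equals $\int_{\partial D}(p(s,x,y)-\lvert D\rvert^{-1})f(y)\,\dd\sigma(y)$, which by Lemma~\ref{lem:exponential} is dominated by $c\,\lvert\lvert f\rvert\rvert_\infty\sigma(\partial D)e^{-c_3 s}$ for $s\ge t_0$, uniformly in $x\in\overline D$. Consequently the limit $\tilde u(x):=\lim_{t\to\infty}\EW_x\int_0^t f(\X_s)\,\dd L_s=\int_0^\infty\int_{\partial D}p(s,x,y)f(y)\,\dd\sigma(y)\,\dd s$ exists uniformly in $x\in\overline D$. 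Each finite truncation is continuous on $\overline D$ by the H\"older continuity of $p$ up to the boundary (Theorem~\ref{thm:1}), so the uniform limit $\tilde u$ lies in $C(\overline D)$.

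Third, to identify $\tilde u$ with the weak solution, I would introduce for $\alpha>0$ the $\alpha$-resolvent approximation $\tilde u^\alpha(x):=\EW_x\int_0^\infty e^{-\alpha s}f(\X_s)\,\dd L_s$. Applying the Revuz correspondence (\ref{eqn:Revuz}) to the Jordan decomposition of the signed measure $f\cdot\sigma$ (each part being a smooth measure of finite energy, by the trace theorem) together with standard $\alpha$-potential theory for symmetric Dirichlet spaces, one obtains $\tilde u^\alpha\in H^1(D)$ with $\E(\tilde u^\alpha,v)+\alpha\langle\tilde u^\alpha,v\rangle=\int_{\partial D}fv\,\dd\sigma$ for every $v\in H^1(D)$. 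Testing $v\equiv 1$ forces $\tilde u^\alpha\in H^1_\diamond(D)$, and testing $v=\tilde u^\alpha$ combined with trace, ellipticity and Poincar\'e on $H^1_\diamond(D)$ produces a bound $\lvert\lvert\tilde u^\alpha\rvert\rvert_{H^1(D)}\le C\lvert\lvert f\rvert\rvert_{L^2(\partial D)}$ uniform in $\alpha\in(0,1]$.

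Finally, dominated convergence with the same exponential bound shows $\tilde u^\alpha\to\tilde u$ uniformly on $\overline D$ as $\alpha\to 0$; the uniform $H^1$-bound allows extracting a weakly convergent subsequence $\tilde u^{\alpha_k}\rightharpoonup w$ in $H^1_\diamond(D)$, whose pointwise and weak limits must coincide, so $w=\tilde u\in H^1_\diamond(D)$. Passing to the limit in the resolvent equation (the $\alpha_k\langle\tilde u^{\alpha_k},v\rangle$ term vanishing because the inner product stays bounded) gives $\E(\tilde u,v)=\int_{\partial D}fv\,\dd\sigma$, so $\tilde u=u$ by uniqueness. The main obstacle I anticipate is verifying that $\tilde u^\alpha$ is genuinely the $\alpha$-potential of $f\cdot\sigma$ tested against arbitrary $v\in H^1(D)$: this amounts to matching the quasi-continuous modification of $v$ appearing in the abstract Revuz identity with the Sobolev trace of $v$ on $\partial D$, a step that rests on the Lipschitz regularity of $\partial D$ and, for the strict-sense PCAF interpretation of the boundary local time and its link with the conormal flux, on the simplifying assumption (A1).
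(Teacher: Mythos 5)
Your proposal is correct, and its key step takes a genuinely different route from the paper's. The first two stages coincide with the paper: Lax--Milgram existence, and the occupation formula of Lemma \ref{lem:occ} combined with Lemma \ref{lem:exponential}, Theorem \ref{thm:1} and the Aronson bound (\ref{eqn:Gaussian}) to get the integral representation and continuity up to the boundary. For the identification of the probabilistic candidate with the weak solution, however, the paper regularizes the conductivity by smooth $\kappa^{(n)}$, uses the Skorohod decomposition and It\^{o}'s formula against the test class $V(D)$ to show that the analogous expression $u^{(n)}$ solves the smooth Neumann problem, and then passes to the limit twice: on the PDE side via $G$-convergence of $\mathcal{L}^{(n)}$ to $\mathcal{L}$ \cite{Zikov}, and on the probabilistic side via locally uniform convergence of the transition densities \cite{RoSl00}; assumption (A1) enters precisely in that construction. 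You instead stay inside Dirichlet-space potential theory: since the trace theorem makes $\sigma$ a measure of finite energy integral and Lemma \ref{lem:smooth} provides the strict-sense PCAF, the quantity $\EW_x\int_0^\infty e^{-\alpha s}f(\X_s)\dd L_s$ is the $\alpha$-potential $U_\alpha(f\cdot\sigma)$ of a finite-energy signed measure, which yields the resolvent identity $\E(\tilde u^\alpha,v)+\alpha\left<\tilde u^\alpha,v\right>=\int_{\partial D}f\tilde v\dd\sigma$, the zero-mean property, a uniform $H^1_{\diamond}(D)$ bound, and a weak-compactness passage $\alpha\downarrow 0$. This is shorter, avoids the smoothing of $\kappa$, the $G$-convergence machinery and the density-convergence input, and in fact never appears to use (A1), so it proves the statement for general bounded measurable anisotropic $\kappa$ with the conormal condition (\ref{eqn:continuum}); what the paper's route buys in exchange are the explicit Skorohod decompositions and density convergence that it reuses elsewhere. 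Two small points you should make explicit: the passage from Lemma \ref{lem:occ} to the exponentially weighted occupation identity (or cite \cite[Theorem 5.1.3]{Fukushimaetal} directly for the $\alpha$-potential representation of the PCAF), and the continuity of the truncated integrals near $s=0$, which requires the Aronson bound (\ref{eqn:Gaussian}) in addition to Theorem \ref{thm:1}; the identification of the Sobolev trace with the quasi-continuous version on $\partial D$, which you flagged yourself, is indeed the point to check and is standard on Lipschitz domains.
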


\begin{proof} 
The existence of a unique normalized weak solution to (\ref{eqn:con}),
(\ref{eqn:continuum}) is guaranteed by the standard theory of linear
elliptic boundary value problems. 
Let us set $u_t(x):=\EW_{x}\int_0^t f(\X_s)\dd  L_s$ and
$u_{\infty}(x):=\lim_{t\rightarrow\infty}u_t(x)$, $x\in\overline{D}$,
respectively.
From the occupation formula (\ref{eqn:occ}) and the compatibility
condition $\int_{\partial D}f(x)\dd \sigma(x)=0$ it follows immediately
that 
\begin{equation*}
u_t(x)=\int_0^t\int_{\partial D}(p(t,x,y)-\lvert
D\rvert^{-1})f(y)\dd \sigma(y)\dd  s\quad\text{for all }x\in\overline{D}.
\end{equation*}
By Lemma \ref{lem:exponential} the convergence towards the stationary
distribution is uniform over $\overline{D}$, in particular, 
\begin{equation}\label{eqn:occ1}
u_{\infty}(x)=\int_0^{\infty}\int_{\partial D}(p(t,x,y)-\lvert
D\rvert^{-1})f(y)\dd \sigma(y)\dd  s\quad\text{for all
}x\in\overline{D}. 
\end{equation}

It follows from (\ref{eqn:occ1}) together with Theorem \ref{thm:1} and
the Aronson type upper bound (\ref{eqn:Gaussian}) that $u_{\infty}$ is
in $C(\overline{D})$. Moreover Lemma \ref{lem:exponential} implies the
normalization $\int_D u_{\infty}(x)\dd  x=0$.

Now let us use the following regularization technique: Let
$(\kappa^{(n)})_{n\in\N}$ denote a sequence of smooth conductivities
with components in $C^{\infty}(\overline{D})$ such that for $1\leq
i,j\leq d$, $\kappa_{ij}^{(n)}\rightarrow\kappa_{ij}$ a.e. as
$n\rightarrow\infty$. Let us consider the Dirichlet space
$(H^1(D),\E^{(n)})$ with $\E^{(n)}(v,w):=\int_D\kappa^{(n)}\nabla
v\cdot\nabla w\dd  x$ and the associated reflecting diffusion process
$\X^{(n)}$. Using the Fukushima decomposition (\ref{eqn:Fukudecomp}) for
the coordinate functions we obtain the Skorohod decomposition
\begin{equation*}
\X_t^{(n)}=x+\int_0^ta^{(n)}(\X_s^{(n)})\dd  s+\int_0^t B^{(n)}(\X_s)\dd 
W_s-\int_0^t\nu(\X^{(n)}_s)\dd  L^{(n)}_s,
\end{equation*}  
where $W$ is a standard $d$-dimensional Brownian motion,
$a^{(n)}_i:=\sum_{j=1}^d\partial_j\kappa_{ij}^{(n)}$, $i=1,...,d,$ and
the matrix $B^{(n)}$ satisfies
$\kappa^{(n)}=\frac{1}{2}B^{(n)}(B^{(n)})^T$. Let us define $u^{(n)}_t$
in the same manner as $u_t$ and
$u^{(n)}(x):=\lim_{t\rightarrow\infty}u_{t}^{(n)}(x)$,
$x\in\overline{D}$. We show that $u^{(n)}$ is the unique weak solution
of the elliptic boundary value problem
$\nabla\cdot(\kappa^{(n)}\nabla u^{(n)})=0$ in $D$ with Neumann boundary
condition $\partial_{\nu}u^{(n)}=f$ on $\partial D$
in the Sobolev space $H^1_{\diamond}(D).$ For test functions
$v\in V(D)$ we may apply It\^{o}'s formula for semimartingales to obtain 
\begin{equation*}
\EW_xv(\X^{(n)}_t)=v(x)+\EW_x\int_0^t
\nabla\cdot(\kappa^{(n)}\nabla v(\X_s^{(n)}))\dd  s. 
\end{equation*}
By Fubini's Theorem this is equivalent to 
\begin{equation*}
T_t^{(n)}v(x)-v(x)=\int_0^t\int_Dp^{(n)}(s,x,y)\nabla\cdot(\kappa^{(n)}\nabla
v(y))\dd  y\dd  s,
\end{equation*}
where we have used the superscript \lq$(n)$\rq\ for the semigroup and
transition density kernel, respectively, corresponding to
$\kappa^{(n)}$. Multiplication with $f$, integration over $\partial D$
and another change of the orders of integration yields finally 
\begin{equation*}
\int_{\partial D}f(y)(T_t^{(n)}v(y)-v(y))\dd \sigma(y)
=\left<u_t^{(n)},\nabla\cdot(\kappa^{(n)}\nabla v)\right>.
\end{equation*}
Since $u_t^{(n)}\rightarrow u^{(n)}$ and $T^{(n)}_tv\rightarrow\lvert
D\rvert^{-1}\int_D v\dd x$, both uniformly on $\overline{D}$, as
$t\rightarrow\infty$, we have 
\begin{equation*} 
\left<u^{(n)},\nabla\cdot(\kappa^{(n)}\nabla v)\right>
=-\int_{\partial D}f(y)v(y)\dd  \sigma(y),
\end{equation*}
where we have used the expression (\ref{eqn:occ1}) with $p^{(n)}$
instead of $p$ for $u^{(n)}$. As this holds true for every $v\in V(D)$,
$u^{(n)}$ must be the unique normalized weak solution to the boundary
value problem by a density argument.

Now let us show the convergence of the sequence $(u^{(n)})_{n\in\N}$
towards $u\in H^1_{\diamond}(D)$, 
the unique solution of
(\ref{eqn:con}), (\ref{eqn:continuum}).
By the standard Trace Theorem
there exists a function $\phi\in H^1_{\text{div}}(D)$ such that
$\partial_{\nu} \phi=f$ and $\mathcal{L}\phi\in (H^1(D))'$. The bilinear
form $\E$ is coercive on $H^1_{\diamond}$, thus by the Lax-Milgram Theorem there exists a unique $w\in
H^1_{\diamond}(D)$ satisfying 
\begin{equation*}
\int_D \kappa\nabla w\cdot\nabla v\dd 
x=\left<\mathcal{L}\phi,v\right>_{(H^1_{\diamond}(D))',H^1_{\diamond}(D)}
\quad\text{for all }v\in H^1_{\diamond}(D),
\end{equation*} 
i.e., $w$ is the weak solution of the problem
$\mathcal{L}w=-\mathcal{L}\phi$ with homogeneous Neumann boundary
condition and thus $u$ has the form $u=w+\phi$. Analogously, $u^{(n)}$
has the form $u^{(n)}=w^{(n)}+\phi$. We show that
$\mathcal{L}^{(n)}\phi\rightarrow\mathcal{L}\phi$ in the norm of
$(H^1_{\diamond}(D))'$. We have for every $v\in H^1_{\diamond}(D)$
\begin{equation*}
\left<\mathcal{L}\phi-\mathcal{L}^{(n)}\phi,v\right>_{(H_{\diamond}^1(D))',H^1_{\diamond}(D)}=\sum_{i,j=1}^d\int_D(\kappa_{ij}^{(n)}-\kappa_{ij})\partial_j\phi\,\partial_i
v\dd  x.
\end{equation*}
Notice that $(\kappa_{ij}^{(n)}-\kappa_{ij})\partial_j \phi\in L^2(D)$,
$1\leq i,j\leq d$, hence the Dominated Convergence Theorem yields 
\begin{equation*}
\lvert\lvert(\kappa_{ij}^{(n)}-\kappa_{ij})\partial_j \phi\rvert\rvert_{L^2(D)}\rightarrow 0\quad\text{as }n\rightarrow\infty.
\end{equation*}
After applying H\"older's inequality we have thus shown that 
\begin{equation}\label{eqn:normconv}
\lvert\lvert \mathcal{L}\phi-\mathcal{L}^{(n)}\phi\rvert\rvert_{(H^1_{\diamond}(D))'}\rightarrow 0\quad \text{as }n\rightarrow\infty.
\end{equation}

Moreover from our assumptions on the sequence $(\kappa^{(n)})_{n\in\N}$ it is clear that for $1\leq i,j\leq d$, the functions $\lvert\kappa_{ij}^{(n)}-\kappa_{ij}\rvert^2$ are measurable and bounded and $\lvert\kappa_{ij}^{(n)}-\kappa_{ij}\rvert^2\rightarrow 0$ for a.e. $x\in\overline{D}$ as $n\rightarrow\infty$. Hence the Dominated Convergence Theorem yields $\lvert\lvert \kappa_{ij}^{(n)}-\kappa_{ij}\rvert\rvert_{L^2(D)}\rightarrow 0$ as $n\rightarrow 0$. It is well-known, that this implies $G$-convergence of the sequence of elliptic operators $(\mathcal{L}^{(n)})_{n\in\N}$ towards $\mathcal{L}$, cf.~\cite{Zikov}. By \cite[Theorem 5]{Zikov} this $G$-convergence together with the convergence (\ref{eqn:normconv}) yields that $w^{(n)}\rightarrow w$ weakly in $H^1_{\diamond}(D)$, thus implying $u^{(n)}\rightarrow u$ weakly in $H^1_{\diamond}(D)$.

On the other hand by \cite[Lemma 2.2]{RoSl00} together with the H\"older continuity up to the boundary of both, $p^{(n)}$, $n\in\N$, and $p$, it follows that for fixed $x\in\overline{D}$,  $p^{(n)}(\cdot,x,\cdot)\rightarrow p(\cdot,x,\cdot)$ uniformly on compacts in $(0,T]\times\overline{D}$ for all $T>0$. It follows from (\ref{eqn:occ1}) that $u^{(n)}(x)\rightarrow u_{\infty}(x)$ for all $x\in\overline{D}$ as $n\rightarrow\infty$. In particular $u$ must coincide with $u_{\infty}$ and the assertion is proved.

\end{proof}
\begin{remark}\label{rem:1}
Note that a similar regularization technique may be used to prove the Feynman-Kac formula 
\begin{equation*}
u(x)=\EW_x\phi(\X_{\tau_D}),\quad x\in\overline{D}
\end{equation*}
for the conductivity equation (\ref{eqn:con}) with Dirichlet boundary condition $u|_{\partial D}=\phi$, where $\tau_D$ denotes the first exit time from the domain $D$. Such a proof requires the fact that for every $x\in \overline{D}$,
\begin{equation*}
\mathbb{P}_x\circ (\X^{(n)},\tau_D^{(n)})^{-1}\rightarrow \mathbb{P}_x\circ (\X,\tau_D)^{-1}\quad\text{as }n\rightarrow\infty
\end{equation*}
in the topology of $C([0,\infty);\R^d)\times\R$ which follows easily from the assumption that $D$ has a Lipschitz boundary, i.e., all points of $\partial D$  are regular, cf.~\cite[Section 4.27]{Karatzas}.
\end{remark}

\subsection{Complete electrode model}
The main result for the complete electrode model (\ref{eqn:con}),
(\ref{eqn:cem}) is the following Theorem.
\begin{theorem}\label{thm:cem}
For given functions $f,g$ defined by (\ref{eqn:functions}) and a voltage
pattern $U\in\R^N$ satisfying (\ref{ground}), there is a unique
weak solution $u\in C(\overline{D})\cap H^1(D)$ to the boundary value problem
(\ref{eqn:con}), (\ref{eqn:cem}). This solution admits the Feynman-Kac
type representation 
\begin{equation}\label{eqn:FK}
u(x)=\EW_{x}\int_0^{\infty}e_g(t)f(\X_t)\dd 
L_t\quad\text{for all }x\in\overline{D},
\end{equation}
with 
\begin{equation}
e_g(t):=\exp\Big(-\int_0^tg(\X_s)\dd  L_s\Big),\quad t\geq 0.
\end{equation}
\end{theorem}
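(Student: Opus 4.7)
The plan is to mirror the approach used for Theorem~\ref{thm:cont}, with the exponential Feynman--Kac weight $e_g$ encoding the Robin term. Existence and uniqueness of a weak solution $u\in H^1(D)$ follows from Lax--Milgram applied to the bilinear form $B(v,w):=\int_D\kappa\nabla v\cdot\nabla w\,\dd x+\int_{\partial D}gvw\,\dd\sigma(x)$, which is continuous on $H^1(D)$ and coercive since $g\geq z^{-1}$ on the electrodes, a boundary set of positive surface measure. Setting $u^{\ast}(x):=\EW_{x}\int_0^\infty e_g(t)f(\X_t)\,\dd L_t$, the task splits into showing (i) $u^{\ast}\in C(\overline D)$ and (ii) $u^{\ast}=u$.

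For (i) the key observation is that $|f|\leq(\max_l|U_l|)\,g$ pointwise on $\partial D$ since both are supported on $\bigcup_{l=1}^N E_l$, combined with the identity $e_g(t)g(\X_t)\,\dd L_t=-\dd e_g(t)$. This yields the uniform bound
\[
\EW_{x}\int_0^\infty e_g(t)|f(\X_t)|\,\dd L_t\leq(\max_l|U_l|)\,\EW_{x}\int_0^\infty e_g(t)g(\X_t)\,\dd L_t\leq\max_l|U_l|.
\]
Hence the truncations $u^{\ast}_T(x):=\EW_{x}\int_0^T e_g(t)f(\X_t)\,\dd L_t$ converge uniformly to $u^{\ast}$ on $\overline D$. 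Each $u^{\ast}_T$ is continuous on $\overline D$ by the occupation formula (Lemma~\ref{lem:occ}), the Markov property and the H\"older regularity of the transition kernel density (Theorem~\ref{thm:1}), so $u^{\ast}\in C(\overline D)$.

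For (ii) I employ the regularization technique from the proof of Theorem~\ref{thm:cont}. Choose smooth $\kappa^{(n)}\in C^\infty(\overline D)^{d\times d}$ with uniform ellipticity bounds and $\kappa^{(n)}\to\kappa$ a.e., and let $u^{(n)}\in H^1(D)$ solve the Robin problem with $\kappa^{(n)}$. In the smooth regime, the Fukushima decomposition of the quasi-continuous representative of $u^{(n)}$ combined with the integration-by-parts formula against $\E^{(n)}$, together with the PDE $\nabla\cdot(\kappa^{(n)}\nabla u^{(n)})=0$ and the Robin identity $\kappa^{(n)}\nu\cdot\nabla u^{(n)}=f-gu^{(n)}$, identifies the zero-energy part and gives
\[
u^{(n)}(\X_t^{(n)})=u^{(n)}(x)+M_t^{[u^{(n)}]}+\int_0^t(gu^{(n)}-f)(\X_s^{(n)})\,\dd L_s^{(n)}.
\]
Applying the It\^o product rule to $e_g^{(n)}(t)u^{(n)}(\X_t^{(n)})$, the covariation term vanishes because $e_g^{(n)}$ is of finite variation; the $gu^{(n)}$ contribution cancels against $\dd e_g^{(n)}(t)=-e_g^{(n)}(t)g(\X_t^{(n)})\,\dd L_t^{(n)}$, and taking expectations yields
\[
\EW_{x}\bigl[e_g^{(n)}(t)u^{(n)}(\X_t^{(n)})\bigr]=u^{(n)}(x)-\EW_{x}\int_0^t e_g^{(n)}(s)f(\X_s^{(n)})\,\dd L_s^{(n)}.
\]
The weak maximum principle yields the uniform bound $\lvert\lvert u^{(n)}\rvert\rvert_\infty\leq\max_l|U_l|$, and $\EW_{x}e_g^{(n)}(t)\to 0$ as $t\to\infty$ since recurrence of the conservative diffusion together with $\sigma(\bigcup_{l=1}^N E_l)>0$ forces $\int_0^\infty g(\X_s^{(n)})\,\dd L_s^{(n)}=\infty$ almost surely; passing $t\to\infty$ then gives $u^{(n)}(x)=(u^{(n)})^{\ast}(x)$ for every $x\in\overline D$.

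The passage $n\to\infty$ proceeds as in Theorem~\ref{thm:cont}: $G$-convergence of $\mathcal L^{(n)}$ to $\mathcal L$, together with norm convergence of a suitable lifting of the Robin data in the dual space, yields $u^{(n)}\rightharpoonup u$ weakly in $H^1(D)$; simultaneously, \cite[Lemma~2.2]{RoSl00} together with Theorem~\ref{thm:1} gives $p^{(n)}\to p$ uniformly on compacts in $(0,T]\times\overline D\times\overline D$, which combined with the uniform tail bound from step (i) implies $(u^{(n)})^{\ast}(x)\to u^{\ast}(x)$ for every $x\in\overline D$. Hence $u=u^{\ast}$ a.e. in $D$, and by continuity of both sides, everywhere on $\overline D$. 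The main technical obstacle is the Fukushima identification of $N^{[u^{(n)}]}$ and the subsequent control of the boundary local-time integrals under $n\to\infty$; fortunately the Revuz correspondence (Lemma~\ref{lem:occ}) reduces these to time-integrated transition kernels, for which the required uniform convergence is already at hand.
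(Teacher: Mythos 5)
Your overall architecture (probabilistic candidate $u^{\ast}$, continuity, identification with the weak solution) is reasonable, but two steps contain genuine gaps, both at exactly the points where the paper needs its hardest ingredients. First, the continuity step: you assert that each truncation $u^{\ast}_T(x)=\EW_{x}\int_0^T e_g(t)f(\X_t)\dd L_t$ is continuous on $\overline D$ ``by the occupation formula (Lemma~\ref{lem:occ}), the Markov property and Theorem~\ref{thm:1}''. The occupation formula only handles functionals of the form $\EW_{x}\int_0^t\phi(\X_s)\dd L_s$; it does not apply once the path-dependent weight $e_g(t)=\exp(-\int_0^t g(\X_s)\dd L_s)$ is inserted, so the continuity of $x\mapsto\EW_{x}\int_0^T e_g(t)f(\X_t)\dd L_t$ does not follow from the tools you cite. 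This is precisely what the paper's Theorem~\ref{thm:3} (the strong Feller property of the Feynman--Kac semigroup $T^g_t$) is for, and its proof requires constructing the perturbed kernel $p^g=\sum_k p^g_k$ by an iterated-kernel series and proving its continuity up to the boundary; that work is absent from your argument. (In addition, your uniform bound $\EW_{x}\int_0^\infty e_g|f|\dd L\le\max_l|U_l|$ shows well-definedness but not uniform smallness of the tails; for uniform convergence of $u^{\ast}_T$ you would still need $\sup_{x}\EW_{x}e_g(T)\to0$, which requires a separate argument of the kind you only sketch later for $\kappa^{(n)}$.)

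Second, the identification step via regularization does not close. Smoothing $\kappa$ does not make $u^{(n)}$ classically regular near the Lipschitz boundary with the discontinuous Robin data $f,g$, so the decomposition of $u^{(n)}(\X^{(n)}_t)$ you write down must anyway be obtained from the Dirichlet-form machinery (identification of the zero-energy part through the Revuz correspondence for the boundary measure $(gu^{(n)}-f)\cdot\sigma$); but that machinery, namely \cite[Theorem 5.4.2]{Fukushimaetal} applied to the bounded weak solution $u\in H^1(D)$, works directly for the merely measurable $\kappa$, which is exactly the paper's route --- the detour through $\kappa^{(n)}$ buys nothing and creates a new problem: your passage $n\to\infty$ claims that $p^{(n)}\to p$ uniformly on compacts plus Lemma~\ref{lem:occ} gives $(u^{(n)})^{\ast}(x)\to u^{\ast}(x)$, but the functional $\EW_{x}\int_0^\infty e^{(n)}_g(t)f(\X^{(n)}_t)\dd L^{(n)}_t$ is nonlinear in the occupation measure because of the exponential weight, so it is \emph{not} reduced to time-integrated transition kernels; one would need convergence of the perturbed kernels $p^{g,(n)}$ (or convergence in law of the pairs $(\X^{(n)},L^{(n)})$), neither of which you establish. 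By contrast, in Theorem~\ref{thm:cont} the representation is linear in $p^{(n)}$, which is why kernel convergence suffices there but not here. The paper avoids both problems: continuity comes from the identity $u(x)=T^g_tu(x)+\EW_{x}\int_0^te_g(s)f(\X_s)\dd L_s$ together with Theorem~\ref{thm:3} and $\sup_x\EW_xL_t\to0$, and the identification comes from the Fukushima decomposition of $\tilde u$ for the original $\kappa$, multiplication by $e_g$, localization, and the boundedness of the gauge. To repair your proof you would either have to prove the strong Feller property of $T^g_t$ and the convergence of the perturbed kernels, or drop the regularization and argue directly as the paper does.
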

Before we are ready to give a proof of Theorem \ref{thm:cem} let us
introduce the \emph{Feynman-Kac semigroup} of the complete electrode
model, i.e., the one-parameter family of operators $\{T^g_t,t\geq 0\}$
defined by
\begin{equation} 
T^g_tv(x):=\EW_x e_g(t)v(\X_t),\quad x\in\overline{D}\text{ and } t\geq 0.
\end{equation}
The following Theorem is crucial for proving the claimed regularity of the potential.
\begin{theorem}\label{thm:3}
$\{T^g_t,t\geq 0\}$ is a strong Feller semigroup on $L^2(D)$.
\end{theorem}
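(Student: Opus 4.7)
The plan is to establish the strong Feller property by expanding $T^g_t v$ as a Dyson--Phillips series whose summands are iterated boundary convolutions of the unperturbed transition kernel $p$, and then to appeal to the H\"older regularity up to the boundary provided by Theorem~\ref{thm:1}. Since $g\geq 0$ is bounded by $1/z$, the multiplicative functional $e_g$ takes values in $[0,1]$ and satisfies the elementary identity
\begin{equation*}
e_g(t)=1-\int_0^t g(\X_s)e_g(s)\dd L_s.
\end{equation*}
Combining this with the strong Markov property of $\X$ and the occupation formula of Lemma~\ref{lem:occ}, I would derive for any bounded Borel measurable $v$ the Volterra identity
\begin{equation*}
T^g_t v(x)=T_tv(x)-\int_0^t\int_{\partial D}p(s,x,y)\,g(y)\,T^g_{t-s}v(y)\dd\sigma(y)\dd s.
\end{equation*}
Iterating this integral equation produces the Dyson--Phillips series $T^g_tv=\sum_{n\geq 0}(-1)^n J_n(t,\cdot,v)$, where $J_0=T_tv$ and $J_n$ is the $n$-fold boundary convolution of $p$ weighted by $g$ over the time-ordered simplex $\{0<s_1<\cdots<s_n<t\}$, terminating in a bounded factor $T_{t-s_n}v(y_n)$.

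Next I would show that each iterate $J_n(t,\cdot,v)$ lies in $C(\overline D)$. For $n=0$ this is immediate from Theorem~\ref{thm:1} together with bounded convergence. For $n\geq 1$, continuity at $x$ follows from the joint H\"older continuity of $p(s,\cdot,y)$ for $s>0$ combined with the dominated convergence theorem; an integrable majorant is supplied by the Aronson type Gaussian bound~(\ref{eqn:Gaussian}), while the integrability near $s_1=0$ is controlled by the uniform boundedness of the $1$-potential $G_1\sigma$ established in Lemma~\ref{lem:smooth}.

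The main obstacle is the uniform convergence of the series. Here I would invoke a Khasminskii type estimate: since $M:=\sup_{x\in\overline D}G_1\sigma(x)<\infty$ by Lemma~\ref{lem:smooth} and $\|g\|_\infty\leq 1/z$, a standard induction on the time-ordered simplex yields
\begin{equation*}
\sup_{x\in\overline D}\lvert J_n(t,x,v)\rvert\leq\frac{C(t)^n}{n!}\,\|v\|_\infty
\end{equation*}
for some constant $C(t)$ depending only on $t$, $\|g\|_\infty$ and $\sup_x\EW_xL_t$. Consequently the Dyson--Phillips series converges absolutely and uniformly on $\overline D$, so $T^g_tv$ is a uniform limit of continuous functions and thus belongs to $C(\overline D)$. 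Combined with the contraction bound $\lvert T^g_tv(x)\rvert\leq\|v\|_\infty$, this proves the strong Feller property on bounded Borel functions; the stated version on $L^2(D)$ then follows by approximating $v\in L^2(D)$ through bounded truncations and using that the resulting Dyson kernel $p^g(t,x,y)$ is uniformly bounded in $y$ for fixed $t>0$ and jointly continuous in $(x,y)$.
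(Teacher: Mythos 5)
Your strategy \mdash expanding the Feynman--Kac semigroup into a series of iterated boundary convolutions of $p$ against $g\,\sigma$, proving continuity of each iterate from Theorem~\ref{thm:1} together with the Gaussian bound~(\ref{eqn:Gaussian}) and Lemma~\ref{lem:smooth}, and concluding by uniform convergence of the series \mdash is exactly the paper's argument; the paper phrases it at the kernel level (the iterates $p^g_k$) rather than at the operator level (your $J_n$), but these are the same objects. The genuine gap is your convergence estimate. Writing $J_n$ via the Markov property as $J_n(t,x,v)=\frac{1}{n!}\EW_x\bigl\{\bigl(\int_0^t g(\X_s)\dd L_s\bigr)^n v(\X_t)\bigr\}$, the standard Khasminskii induction over the time-ordered simplex gives $\EW_x\bigl(\int_0^t g(\X_s)\dd L_s\bigr)^n\leq n!\,M_t^n$ with $M_t:=\sup_{x\in\overline D}\EW_x\int_0^t g(\X_s)\dd L_s$, so the $1/n!$ coming from the simplex is exactly consumed and the resulting bound is geometric, $\sup_x\lvert J_n(t,x,v)\rvert\leq M_t^n\,\lVert v\rVert_\infty$ \mdash which is precisely the paper's estimate~(\ref{eqn:densest}). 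Your claimed bound $C(t)^n\lVert v\rVert_\infty/n!$ would require $\EW_x\bigl(\int_0^t g\dd L_s\bigr)^n\leq C(t)^n$ uniformly in $n$, i.e.\ essential boundedness of the boundary local time, which fails; at each inductive step the conditional expectation is only controlled by the full-horizon quantity $M_t$, not by a fraction of it. So as written, your series is only guaranteed to converge when $M_t<1$, and the claim for all $t$ does not follow.

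The repair is standard and is what the paper does: by Lemma~\ref{lem:occ} (or the computation in Lemma~\ref{lem:smooth}, which in fact yields $M_t\leq c\sqrt t$ for small $t$) one has $M_t\rightarrow 0$ as $t\rightarrow 0$, so the series converges absolutely and uniformly for $t\in(0,T]$ with $T$ sufficiently small; continuity of $T^g_t v$ for arbitrary $t>0$ then follows from the semigroup property (Chapman--Kolmogorov), $T^g_t=T^g_{t_1}T^g_{t-t_1}$ with $t_1\leq T$ \mdash a step your factorial bound was meant to render unnecessary but which must be reinstated. Two smaller remarks: your alternating-sign Volterra/Dyson--Phillips formulation is fine (and consistent with $g\geq0$, $e_g\leq1$), since only absolute convergence is used; and the theorem also asserts that $\{T^g_t\}$ is a strongly continuous semigroup on $L^2(D)$, which the paper handles (in sketch) by identifying it with the semigroup of the perturbed Dirichlet form $\E^g(v,w)=\E(v,w)+\int_{\partial D}gvw\dd\sigma$ \mdash your proposal should at least record this identification rather than only the mapping property into $C(\overline D)$.
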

\begin{proof}
To show that $\{T^g_t,t\geq 0\}$ is a strongly continuous semigroup on
$L^2(D)$, one can employ the theory of symmetric Dirichlet spaces. To be
precise, one must show that $\{T^g_t,t\geq 0\}$ is associated with the
perturbed Dirichlet space $(\D(\E^g),\E^g)$, which is obtained by
perturbation of $(\D(\E),\E)$ with the measure $-g\cdot\sigma$, i.e.,
\begin{equation*}
\E^g(v,w)=\E(v,w)+\int_{\partial D}g(x)v(x)w(x)\dd \sigma(x),\quad
v,w\in\D(\E^g)=H^1(D),
\end{equation*}
where the identity $\D(\E^g)=H^1(D)$ follows from the standard
Trace Theorem.  As in the proof of \cite[Theorem 6.1.1]{Fukushimaetal},
it is sufficient to show $G_{\alpha}^g\phi\in H^1(D)$,
$\E_{\alpha}^{g}(G_{\alpha}^g\phi,v)=\left<\phi,v\right>$ for all
$\phi\in L^2(D)$ and $v\in H^1(D)$, where $G_{\alpha}^g\phi$ denotes the
Laplace transform
\begin{equation*}
G_{\alpha}^g\phi(x)=\EW_x\int_0^{\infty}e_g(t)e^{-\alpha
t}\phi(\X_t)\dd  t.
\end{equation*} 
We omit this computation for brevity. Moreover $T^g_t$ is a bounded
operator from $L^1(D)$ to $L^{\infty}(D)$ for every $t>0$ which can be
shown using Fatou's Lemma. By the Dunford-Pettis Theorem $T^g$ can thus
be represented as an integral operator for every $t>0$,
\begin{equation}
T^g_t\phi(x)=\int_Dp^g(t,x,y)\phi(y)\dd  y\quad\text{a.e. on }D,\text{
for every }\phi\in L^1(D), 
\end{equation}
where for all $t>0$ we have $p^g(t,x,y)\in L^{\infty}(D\times D)$ and
$p^g(t,x,y)\geq 0$ for a.e. $x, y \in \overline{D}$.  For the strong Feller
property we have to show
that $T^g_t$, $t>0$ maps bounded measurable functions to
$C(\overline{D})$. We use the method from the papers
\cite{Hsu,Papanicolaou} to construct the transition kernel density
$p_g$. Let $p^g_0(t,x,y):=p(t,x,y)$ and set 
\begin{equation*}
p_k^g(t,x,y):=\int_0^t\int_{\partial
D}p(s,x,z)g(z)p_{k-1}^g(t-s,z,y)\dd \sigma(z)\dd  s,\quad k\in\N.
\end{equation*}
Note that the terms $p_k^g$ are positive and symmetric in the $x$ and
$y$ variables by the properties of $p$. By induction using Lemma
\ref{lem:occ} it is not difficult to verify that for all $k\in\N$
\begin{equation*}
\int_0^t\int_{\partial D}g(x)p_k^g(s,x,y)\dd \sigma(x)\dd  s\leq
\Big(\sup_{x\in\overline{D}}\Big\{\EW_x\int_0^tg(\X_s)\dd 
L_s\Big\}\Big)^{k+1}
\end{equation*}
and that there is a positive constant $c_1$ such that 
\begin{equation}\label{eqn:densest}
p_k^g(t,x,y)\leq c_1^{k+1}t^{-d/2}\Big(\sup_{x\in\overline{D}}
\Big\{\EW_x\int_0^tg(\X_s)\dd  L_s\Big\}\Big)^{k+1}\quad
\text{for all }k\in\N.
\end{equation}
Let us show the continuity of $p^g_k$, $k\in\N\cup\{0\}$. For $k=0$ this
is a consequence of Theorem \ref{thm:1}. Now assume that $p^g_{k-1}$ is
continuous on $(t_0,T]\times\overline{D}\times\overline{D}$ for $t_0>0$,
then we have for $t\in (t_0,T]$
\begin{equation*}\begin{split}
p_k^g(t,x,y)=&\int_0^{t_0}\int_{\partial D}p(s,x,z)g(z)
p_{k-1}^g(t-s,z,y)\dd \sigma(z)\dd  s\\
&+\int_{t_0}^{t}\int_{\partial
D}p(s,x,z)g(z)p_{k-1}^g(t-s,z,y)\dd \sigma(z)\dd  s.
\end{split}
\end{equation*}
Note that the first integral on the right-hand side tends to zero
uniformly as $t_0\rightarrow 0$, which is a consequence of
(\ref{eqn:densest}), while the second integral is continuous by
assumption. Hence there exists a $T>0$ such that the series
$p^g(t,x,y):=\sum_{k=0}^{\infty}p_k^g(t,x,y)$ converges absolutely and
uniformly on any compact subset of
$(0,T]\times\overline{D}\times\overline{D}$ and is thus continuous on
$(0,T]\times\overline{D}\times\overline{D}$. By the Markov property we
have for all $t\in (0,T]$ and every $x\in\overline{D}$ the following 
expression for $T^g_t\phi(x)$:
\begin{equation*}
\int_Dp^g(t,x,y)\phi(y)\dd  y=\EW_x\phi(\X_t)+\sum_{k=1}^{\infty}
\frac{1}{k!}\EW_x\Big\{\Big(\int_0^tg(\X_s)\dd 
L_s\Big)^k\phi(\X_t)\Big\}.
\end{equation*}
The assertion for arbitrary $T>0$ follows from the Chapman-Kolmogorov
equation.
\end{proof}
 
\begin{proof}[Proof of Theorem \ref{thm:cem}]
First we show that $u\in C(\overline{D})$. Let us define a martingale
with respect to $\{\F_t,t\geq 0\}$ by 
\begin{equation*}
\EW_x\Big\{\int_0^{\infty}e_g(s)f(\X_s)\dd  L_s|\F_t\Big\}
=\int_0^te_g(s)f(\X_s)\dd  L_s+e_g(t)u(\X_t),
\end{equation*}
where the right-hand side is obtained using the Markov property of $\X$
together with the fact that $e_g$ is a multiplicative functional of
$\X$. Obviously
\begin{equation*}
e_g(t)u(\X_t)-u(x)+\int_0^te_g(s)f(\X_s)\dd  L_s
\end{equation*} 
is a martingale with respect to $\{\F_t,t\geq 0\}$ as well and hence we
have for all $0\leq s\leq t$:
\begin{equation*}
e_g(s)u(\X_s)=e_g(s)\EW_{\X_s}e_g(t-s)u(\X_{t-s})+e_g(s)
\EW_{\X_s}\int_0^{t-s}e_g(r)f(\X_r)\dd  L_r.
\end{equation*}
Setting $s=0$ yields thus 
\begin{equation*}
u(x)=T^g_t u(x)+\EW_x\int_0^t e_g(s)f(\X_s)\dd  L_s
\quad\text{for all }t\geq 0. 
\end{equation*}
By the Markov property we have $T_t^gu(x)=T^g_s(T^g_{t-s}u)(x)$ and
$T^g_tu$ is continuous on $\overline{D}$ by Theorem \ref{thm:3}. To
prove that $u$ is continuous on $\overline{D}$ it is sufficient to show
that the second term on the right-hand side tends to zero uniformly in
$x$ as $t\rightarrow 0$. This is, however, clear since we may estimate
\begin{equation*}
\sup_{x\in\overline{D}}\Big\{\EW_x\int_0^te_g(s)f(\X_s) \dd 
L_s\Big\}\leq
z^{-1}\max_{l=1,...,N}\{U_l\}\sup_{x\in\overline{D}}\{\EW_xL_t\},
\end{equation*}
where the right-hand side tends to zero as $t\rightarrow 0$ by Lemma
\ref{lem:occ}.

It remains to show that $u$ is given by (\ref{eqn:FK}). Note
first that the \emph{gauge function} $\EW_x\int_0^{\infty}e_g(t)\dd  L_t$ is
$\mathbb{P}_x$-a.s. bounded for every $x\in\overline{D}$, hence the
expression (\ref{eqn:FK}) is well-defined.
By the Lax-Milgram Theorem there exists a weak solution of the boundary
value problem (\ref{eqn:con}), (\ref{eqn:cem}) such that for every $v\in
H^1(D)$
\begin{equation*}
\E(u,v)=\int_{\partial D}f(x)v(x)\dd \sigma(x)- \int_{\partial
D}g(x)u(x)v(x)\dd \sigma(x).
\end{equation*}
By standard theory of linear elliptic boundary value problems $u$ is
bounded, cf.~\cite{Ladyzhenskaya}, which implies by \cite[Theorem
5.4.2]{Fukushimaetal} together with the Fukushima decomposition
(\ref{eqn:Fukudecomp}) that for q.e. $x\in\overline{D}$, $\mathbb{P}_x\text{-a.s.}$
\begin{equation*}
\tilde{u}(\X_t)=\tilde{u}(x)+\int_0^t\nabla \tilde{u}(\X_s)\dd 
M_s-\int_0^t f(\X_s)\dd  L_s+\int_0^t g(\X_s) \tilde{u}(\X_s)\dd 
L_s.
\end{equation*}
Note that the second term on the right-hand side is a local martingale
with respect to $\{\F_t,t\geq 0\}$ and that $e_g$ is continuous, adapted
to $\{\F_t,t\geq 0\}$ and of bounded variation. Multiplication by such functions leaves
the class of local martingales invariant.
Using integration by parts we obtain thus for q.e.
$x\in\overline{D}$ and $t\geq 0$ the identity
\begin{equation*}
\tilde{u}(\X_t)e_g(t)=\tilde{u}(x)+\int_0^te_g(s)\nabla
\tilde{u}(\X_s)\dd  M_s-\int_0^t e_g(s)f(\X_s)\dd  L_s,
\end{equation*}
where the second summand on the right-hand side is a local martingale.
That is, there exists an increasing sequence $(\tau_k)_{k\in\N}$ of
stopping times which tend to infinity such that for every $k\in\N$
$$\Big\{\int_0^{t\wedge\tau_k}e_g(s)\nabla \tilde{u}(\X_s)\dd  M_s,t\geq
0\Big\}$$
is a martingale with respect to $\{\F_t,t\geq 0\}$. In particular we
have for q.e. $x\in\overline{D}$ and every $k\in\N$
\begin{equation*}
\tilde{u}(x)=\EW_x\int_0^{t\wedge\tau_k}e_g(s)f(\X_s)\dd 
L_s+\EW_x\tilde{u}(\X_{t\wedge\tau_k})e_g(t\wedge\tau_k)\quad\text{for
all }t\geq 0.
\end{equation*}
By the uniform integrability of $\{e_g(s),0\leq s\leq t\}$ with respect
to $\mathbb{P}_x$, $x\in\overline{D}$, $t>0$, together with the Monotone
Convergence Theorem we obtain 
\begin{equation*}
\tilde{u}(x)=\EW_x\int_0^te_g(s)f(\X_s)\dd  L_s+
\EW_x\tilde{u}(\X_t)e_g(t)\quad\text{for q.e.  }x\in\overline{D}.
\end{equation*}
Letting $t\rightarrow\infty$ finally yields 
\begin{equation*}
\tilde{u}(x)=\EW_x\int_0^{\infty}e_g(t)f(\X_t)\dd 
L_t\quad\text{for q.e. }x\in\overline{D},
\end{equation*}
where we have used the fact that $\tilde{u}$ is bounded. As we have
shown that the right-hand side in the last equality is continuous up to
the boundary, the function $u$ coincides with its quasi-continuous
version $\tilde{u}$ and the assertion holds for every
$x\in\overline{D}$.
\end{proof}
\begin{remark}
Note that the technique we used to prove Theorem \ref{thm:cem} fails for
the Neumann problem corresponding to the continuum model. This comes
from the fact that in this case the gauge function becomes infinite.
For the same reason Theorem 1.2 from \cite{ChenZhang}, specialized to a
zero lower-order term, does not yield the desired Feynman-Kac type
formula for the continuum model either.
\end{remark}
We can now generalize the martingale characterization obtained in
\cite{Papanicolaou} for weak solutions of (\ref{eqn:con}),
(\ref{eqn:cem}). 
\begin{theorem}
Suppose the conditions of Theorem \ref{thm:cem} are satisfied. Then the following statements are equivalent:
\begin{itemize}
\item[(i)] $u$ is the weak solution of the boundary value problem
(\ref{eqn:con}), (\ref{eqn:cem})
\item[(ii)] For every $x\in\overline{D}$ the expression 
\begin{equation}
\mathcal{M}_t(u):=u(\X_t)-u(x)-\int_0^tg(\X_s)u(\X_s)\dd 
L_s+\int_0^tf(\X_t)\dd  L_s
\end{equation}
is a continuous martingale with respect to
$\{\F_t,t\geq 0\}$. 
\end{itemize}
\end{theorem}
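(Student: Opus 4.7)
The plan is to handle the two implications separately. For (i) $\Rightarrow$ (ii) I would apply the Fukushima decomposition and identify the zero-energy part explicitly in terms of the boundary local time; for (ii) $\Rightarrow$ (i) I would take expectations in the martingale identity, pair against a suitable test function, and pass to the limit as $t\to 0^+$ to recover the weak formulation. For the forward direction, the Fukushima decomposition (\ref{eqn:Fukudecomp}) applied to $u \in H^1(D)\cap C(\overline{D})$, combined with boundedness of $u$ (standard elliptic regularity, cf.~\cite{Ladyzhenskaya}), the weak form of the Robin condition, \cite[Theorem 5.4.2]{Fukushimaetal}, and the Revuz correspondence (\ref{eqn:Revuz}) identify the zero-energy continuous additive functional as
\begin{equation*}
N_t^{[u]} = -\int_0^t f(\X_s)\dd L_s + \int_0^t g(\X_s)u(\X_s)\dd L_s,
\end{equation*}
exactly as carried out in the proof of Theorem \ref{thm:cem}. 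Rearranging gives $\mathcal{M}_t(u) = M_t^{[u]}$, which is automatically continuous since $\X$ and $L$ have continuous paths and $u$ is continuous on $\overline{D}$. Boundedness of $u$ together with the finite energy of $M^{[u]}$ upgrades the local martingale property to a genuine martingale, and the refinement of Section \ref{section3} allows us to assert this for every starting point $x\in\overline{D}$.

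For the reverse implication I would take expectations in $\EW_x\mathcal{M}_t(u) = 0$ to obtain
\begin{equation*}
T_tu(x) - u(x) = \EW_x\int_0^t g(\X_s)u(\X_s)\dd L_s - \EW_x\int_0^t f(\X_s)\dd L_s.
\end{equation*}
Pairing against a test function $v\in \D(\mathcal{L})\cap C(\overline{D})$ (for instance a Neumann eigenfunction from Lemma \ref{lem:exponential}) via the $L^2(D)$ inner product, exploiting self-adjointness of $\{T_t,t\geq 0\}$, the occupation formula (Lemma \ref{lem:occ}), Fubini, and symmetry of $p$, I arrive at
\begin{equation*}
\left<u,\,T_tv - v\right> = \int_0^t\int_{\partial D}(g(y)u(y) - f(y))\,T_sv(y)\dd\sigma(y)\dd s.
\end{equation*}
Dividing by $t$ and sending $t\to 0^+$, the left-hand side converges to $-\E(u,v)$, while by continuity of $s\mapsto T_sv$ at $s=0$ uniformly on $\overline{D}$ (a consequence of the H\"older regularity from Theorem \ref{thm:1}) the right-hand side tends to $\int_{\partial D}(gu - f)v\dd\sigma$. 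This yields the weak formulation (\ref{eqn:cem}), and density of the eigenfunction basis in $(H^1(D),\lvert\lvert\cdot\rvert\rvert_{\E_1})$ extends the identity to all $v\in H^1(D)$.

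The main obstacle I anticipate is the passage to the limit on the right-hand side in (ii) $\Rightarrow$ (i): one needs uniform control of the boundary values of $T_sv$ near $s=0$ in order to commute the $t\to 0^+$ limit with the surface integral. Restricting first to a smooth dense class such as the eigenfunction basis of Lemma \ref{lem:exponential}, for which $T_sv\to v$ uniformly on $\overline{D}$ by continuity of $p$ up to the boundary, bypasses this difficulty; the conclusion then follows by the aforementioned density argument.
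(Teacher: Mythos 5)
Your reverse implication (ii) $\Rightarrow$ (i) is sound and genuinely different from the paper's: you take expectations in $\EW_x\mathcal{M}_t(u)=0$, use the occupation formula of Lemma \ref{lem:occ}, symmetry of $p$ and Fubini, test against the $\kappa$-Neumann eigenfunctions $\phi_j$ from Lemma \ref{lem:exponential} (for which $T_s\phi_j=e^{-\lambda_j s}\phi_j$, so the $t\to 0^+$ limit on both sides is immediate), and conclude by $\E_1$-density of the eigenfunction expansion together with the Trace Theorem. The paper instead multiplies by $e_g(t)$, compares with the martingale attached to the true solution $v$ in the proof of Theorem \ref{thm:cem}, deduces $u-v=T^g_t(u-v)$ for all $t$ and hence $\E^g(u-v,\cdot)=0$. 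Both routes work; yours avoids the perturbed form $\E^g$, the paper's avoids the limiting/density step.

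The forward direction, however, has a genuine gap at exactly the point the statement cares about: (ii) demands the martingale property for \emph{every} $x\in\overline{D}$, while the Fukushima decomposition (\ref{eqn:Fukudecomp}) as used in the proof of Theorem \ref{thm:cem} identifies $\mathcal{M}_t(u)=M^{[u]}_t$ only $\mathbb{P}_x$-a.s.\ for q.e.\ $x$, and $M^{[u]}$ is a (square-integrable) martingale only for $x$ outside an exceptional set. Your appeal to \lq\lq the refinement of Section \ref{section3}\rq\rq\ does not close this: the strict-sense decomposition stated there requires the energy measure $\mu_{\left<u\right>}=2\kappa\nabla u\cdot\nabla u\dd x$ to be smooth in the strict sense (bounded $1$-resolvents $G_1([D_k]\mu_{\left<u\right>})$ along a finely open exhaustion), a condition you never verify and which is not obvious for a weak solution with merely $L^{\infty}$ conductivity, where $\nabla u\in L^2(D)$ only; note that the paper itself never invokes a strict-sense decomposition for $u$, and in Theorem \ref{thm:cem} passes from q.e.\ to all $x$ only through continuity of the Feynman-Kac representative. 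The paper's proof of (i) $\Rightarrow$ (ii) avoids the issue entirely: by the Markov property it suffices to show $\EW_x\mathcal{M}_t(u)=0$ for all $x$ and $t$, which is obtained by differentiating $s\mapsto \EW_x u(\X_s)$ via $p(s,x,y)=T_{s-t}p(t,y,x)$, inserting the weak formulation (using $p(s,x,\cdot)\in H^1(D)$), and applying the occupation formula \mdash\ an argument valid pointwise for every $x$. Your forward direction can be repaired without the strict-sense decomposition, e.g.\ by establishing the q.e.\ statement and then using the Markov property at times $s>0$ (the law of $\X_s$ under $\mathbb{P}_x$ has density $p(s,x,\cdot)$ and so charges no $\E$-exceptional set), letting $s\downarrow 0$ with the integrability supplied by the Gaussian bound; but as written the \lq\lq every $x\in\overline{D}$\rq\rq\ step is unjustified.
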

\begin{proof}
Let us assume that (i) holds. First recall that the integral with
respect to $L$ is defined in pathwise Lebesgue-Stieltjes sense with
respect to the induced random measure $\lambda((s,t]):=L_t-L_s$ on
$\R_+\cup\{0\}$, which is absolutely continuous in the sense that for any
$\varepsilon>0$ there exists a $\delta>0$ such that $\int_B\lvert
\phi\rvert\dd \lambda\leq\varepsilon$ holds for all measurable sets $B$
with $\lambda(B)<\delta$ and bounded measurable functions $\phi$. In
particular this implies the continuity of the maps $t\mapsto\int_0^tf\dd 
L_s$ and $t\mapsto\int_0^tgu\dd  L_s$, respectively. The solution $u$ is continuous up
to the boundary by Theorem \ref{thm:cem}. By the Markov property of $\X$
we have for all $s\leq t$:
 \begin{eqnarray*}
 \EW_x\{\mathcal{M}_t(u)|\F_s\}&=&\EW_{\X_s}\Big\{u(\X_{t-s})+\int_0^{t-s}(f(\X_r)-g(\X_r)u(\X_r))\dd 
 L_r\Big\}\\
 &&+\int_0^s(f(\X_r)-g(\X_r)u(\X_r))\dd  L_r-u(x)\\
 &=&\EW_{\X_s}\mathcal{M}_{t-s}(u)+\mathcal{M}_s(u)
 \end{eqnarray*}
Thus in order to obtain (ii) it suffices to show that
$\EW_x\mathcal{M}_t(u)=0$ $\mathbb{P}_x$-a.s. for all $t\geq 0$ and
all $x\in\overline{D}$. 
From standard theory of strongly continuous semigroups it is known that
$\EW_xu(\X_t)=T_t u(x)$, considered as a Banach space valued mapping
from $\R_{+}$ to $H^1(D)$, is continuously differentiable for every
$t>0$, cf.~\cite{Pazy}. Note that we may write $p(s,x,y)=T_{s-t}p(t,y,x)$
for every $s\geq t>0$ so that $p$, considered as a Banach space valued
  mapping is also continuously differentiable with derivative
  $\frac{\mathrm{d}}{\mathrm{d} s} p(s,x,y)=-\mathcal{L}_yp(s,x,y)$.
We may thus differentiate the expression $\EW_xu(\X_s)$ under the
integral sign to obtain
\begin{equation*}
\begin{split}
\frac{\mathrm{d}}{\mathrm{d} s}\EW_xu(\X_s)=&\int_D
u(y)\frac{\mathrm{d}}{\mathrm{d} s} p(s,x,y)\dd  y=-\int_{\partial
D}p(s,x,y)f(y)\dd \sigma(y)\\
&+\int_{\partial D}p(s,x,y)g(y)u(y)\dd \sigma(y),
\end{split}
\end{equation*}
where we have used (i) together with the fact that for fixed $s>0$ and
$x\in\overline{D}$ the function $p(s,x,\cdot)$ belongs to $H^1(D)$. 
By integration form $0$ to $t$ together with Lemma \ref{lem:occ} we
arrive at
\begin{equation*}
\EW_xu(\X_t)-u(x)=\EW_x\Big\{\int_0^t(-f(\X_s)+g(\X_s)u(\X_s))\dd  L_s\Big\}
\end{equation*}
and (ii) is proved. 

Now let us assume that (ii) holds. By the continuity of $u$
and uniqueness of the Doob-Meyer decomposition the term $u(\X_t)$ is a
continuous semimartingale with respect to $\{\F_t,t\geq 0\}$. Moreover
$e_g(t)$ is continuous, adapted to $\{\F_t,t\geq 0\}$ and of bounded
variation. Multiplication by such functions leaves the class of
semimartingales invariant, i.e., $e_g(t)u(\X_t)$, $t\geq 0$ is a
continuous semimartingale as well. In particular we may define the
It\^{o} stochastic integral with respect to this semimartingale and
integration from $0$ to $t$ of the expression $\dd 
(e_g(s)u(\X_s))+\exp(-\int_0^sg(\X_r)f(\X_s)\dd  L_s)$ yields another
martingale with respect to $\{\F_t,t\geq 0\}$, namely
\begin{equation*}
e_g(t)u(\X_t)-u(x)+\int_0^te_g(s)f(\X_s)\dd  L_s.
\end{equation*}
Let $v$ denote the unique solution to (\ref{eqn:con}), (\ref{eqn:cem}),
then we know from the proof of Theorem \ref{thm:cem} that 
\begin{equation*}
e_g(t)v(\X_t)-v(x)+\int_0^te_g(s)f(\X_s)\dd  L_s
\end{equation*}
is a martingale with respect to $\{\F_t,t\geq 0\}$. Hence
$e_g(t)(u(\X_t)-v(\X_t))$ is a martingale with respect to $\{\F_t,t\geq
0\}$ and by taking the expectation we obtain $u(x)-v(x)=T_t^g(u-v)(x)$.
That is,  $\E^g(u-v,w)=0$ for every $w\in H^1(D)$ or equivalently
\begin{equation*}
\E^g(u,w)=\E^g(v,w)=\int_{\partial D}f(x)w(x)\dd \sigma(x)\quad \text{for
every }w\in H^1(D)
\end{equation*}
which is statement (i).
\end{proof}
\section{Probabilistic interpretation of the inverse conductivity problem}
\label{section5}
The inverse conductivity problem for the continuum model, the so-called 
\emph{Calder\'on problem} reads as follows: \emph{Given the Cauchy data on the 
boundary, i.e., all pairs of voltage and current patterns
 $(\phi,\partial_{\kappa\nu}u)$, each pair corresponding to a 
 solution of the conductivity equation (\ref{eqn:con}) with 
 $u|_{\partial D}=\phi$, is it possible to determine the conductivity $\kappa$ uniquely? }

Since we assume that $D$ is a Lipschitz domain
and the conductivity is uniformly elliptic and bounded,
the solution of the Dirichlet boundary value problem is unique.
Therefore, the Cauchy data can be described as a Dirichlet-to-Neumann
map
\[
\Lambda_\kappa \colon \phi \mapsto \psi = \partial_{\kappa\nu} u, \quad
H^{1/2}(\partial D) \to H^{-1/2}(\partial D)
\]
where both the domain and the range are given by the standard Trace Theorem.
This means that the Calder\'on problem can be restated as \emph{given
$\Lambda_\kappa$, is it possible to determine $\kappa$ uniquely?}. 

We have already demonstrated that solving the forward problem for
the conductivity equation is intimately connected with the diffusion
process $\X$. 
We start with the reflecting diffusion $\X$ and we stop it at the first exit time $\tau_D$
from the domain $D$, leading to the representation of the solution as
\[
u(x) = \EW_x \phi(\X_{\tau_D}),\quad x\in\overline{D},
\]
cf.~Remark \ref{rem:1}.
Therefore, the forward problem related to the conductivity equation
could be probabilistically interpreted as \emph{given $\X$ and the boundary
data $\phi$ determine the corresponding potential $u$}.

Another way of thinking of the forward problem would be to just
recover $\Lambda_\kappa$ given $\kappa$ since this is the actual inverse
of the inverse problem. Since $\mathcal{L}=\nabla\cdot\kappa\nabla$ is the infinitesimal
generator of the Markov process $\X$, we are tempted to seek for a
Markov process $\vX$ with the generator $\Lambda_\kappa$. This
observation was first made by Hsu in 1986 for the reflecting Brownian
motion~\cite{Hsu2}. The Dirichlet-to-Neumann map generates the so-called
\emph{boundary process} $\vX$ associated with the Markov process $\X$, which 
we shall define below.
This way the probabilistic interpretation of the forward problem could be stated as 
\emph{given $\X$ determine the associated boundary process
$\vX$}.

This leads to the following probabilistic interpretation of the Calder\'on problem:
\emph{Given a boundary process $\vX$, is it possible to uniquely determine a process $\X$ whose
boundary process $\vX$ is?}.

Let us now show that this interpretation can be carried out rigorously in our
setting. The boundary local time $L$ is a nondecreasing, adaptive process
that increases only when $\X$ is on the boundary. Following~\cite{Fukushimaetal}, 
we define the right-continuous
right-inverse $\tau$ of $L$ by 
\begin{equation}
  \label{RCRI}
  \tau(s) := \sup\{ r \ge 0 : L_r \le s \}.
\end{equation}
The random variable $\tau(s)$, $s \in [0,\infty)$, is a stopping time with
respect to the right-continuous history $(\mathcal F_t)$ of $\X$ since
$\{ \tau(s) \ge t \} = \{ L_t \le s \} \in \mathcal F_t$ and moreover,
by continuity of the sample paths of $\X$ we see that for every $s \in [0,\infty)$, the
process $\X$ is on the boundary $\partial D$ at time $\tau(s)$.
\begin{definition}
  We define the \emph{boundary process} $\vX$ associated with $\X$ as
  the time-changed trace
  \[
  \vX_t := \X_{\tau(t)}
  \]
  and the \emph{boundary filtration}
  \[
  \widehat{\mathcal F}_t := \mathcal F_{\tau(t)}.
  \]
\end{definition}
We know that the boundary local time $L$ is a PCAF in the strict sense by
Lemma~\ref{lem:smooth} and therefore, the boundary $\partial D$ is the
quasi support of $L$, cf.~\cite[Theorem 5.1.5]{Fukushimaetal}. Moreover,
since we have a Lipschitz domain, every boundary point is a
regular point, and therefore, the
boundary process is a Hunt process on the boundary $\partial D$, cf.~\cite[Theorem
A.2.12., Theorem 6.2.1]{Fukushimaetal}.

In the sequel, we will denote the transition
semigroup of the boundary process by $\{\vT_t, t \ge 0\}$ and the generator of
the semigroup by $\widehat{\mathcal{L}}$.

We note that the representation Theorem~\ref{thm:cont} can be expressed
with the help of the boundary process $\vX$, the first exit time
$\tau_D$ and the first exit place $X_{\tau_D}$.
\begin{lemma}
  \label{lemma:cont2}
    Suppose the conditions of Theorem~\ref{thm:cont} are satisfied.
    Then the solution has a representation
    \[
    u(x) = \lim_{t\rightarrow \infty} \EW_{x} v(X_{\tau_D},t)
    \]
    where 
    \[
    v(x,t) := \int_0^t \EW_{x} f(\vX_s) \dd  s\quad\text{for all }x\in\overline{D}.
    \]
\end{lemma}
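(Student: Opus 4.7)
The plan is to begin from Theorem~\ref{thm:cont} and rewrite its representation in terms of the boundary process via the classical time-change formula for PCAFs. Since $\tau$ is the right-continuous inverse of $L$ defined in~\eqref{RCRI}, the substitution $s = L_r$ yields the pathwise identity
\[
\int_0^t f(\X_r)\dd L_r = \int_0^{L_t} f(\X_{\tau(s)})\dd s = \int_0^{L_t} f(\vX_s)\dd s
\]
for every $t \ge 0$, so that Theorem~\ref{thm:cont} becomes
\[
u(x) = \lim_{t \to \infty}\EW_x \int_0^{L_t} f(\vX_s)\dd s\quad\text{for all }x \in \overline{D}.
\]

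Next I would invoke the strong Markov property of $\X$ at the first hitting time $\tau_D$ of $\partial D$. Since $L$ vanishes identically on $[0,\tau_D]$, one has $\tau(s) \ge \tau_D$ for every $s \ge 0$, and the local time of the shifted process $(\X_{\tau_D + r})_{r \ge 0}$ is $L_{\tau_D + \cdot}$; hence $\vX_s$ is a measurable functional of the shifted path, and the law of $(\vX_s)_{s \ge 0}$ under $\mathbb{P}_x$ conditional on $\F_{\tau_D}$ coincides with its law under $\mathbb{P}_{\X_{\tau_D}}$. Combining this with Fubini's theorem yields
\[
\EW_x v(\X_{\tau_D},t)
= \int_0^t \EW_x\bigl[\EW_{\X_{\tau_D}} f(\vX_s)\bigr]\dd s
= \int_0^t \EW_x f(\vX_s)\dd s
= \EW_x \int_0^t f(\vX_s)\dd s.
\]

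It therefore remains to verify
\[
\lim_{t \to \infty}\EW_x \int_0^{L_t} f(\vX_s)\dd s
= \lim_{t \to \infty}\EW_x \int_0^t f(\vX_s)\dd s,
\]
the natural candidate for both limits being $\int_0^\infty \EW_x f(\vX_s)\dd s$. By Lemma~\ref{lem:smooth} the Revuz measure of $L$ is $\sigma$, and the general theory of trace Dirichlet spaces identifies $\sigma$ (up to normalization) as the invariant measure of $\vX$; combined with the compatibility condition $\int_{\partial D} f\dd\sigma = 0$, this forces $\vT_s f(x) \to 0$ as $s \to \infty$. The key technical step is to promote this to uniform exponential decay in $s$ via an analogue of Lemma~\ref{lem:exponential} for $\vT_s$, obtained from a Nash-type inequality for the trace Dirichlet form together with an Aronson-type bound for the transition density of $\vX$ in the spirit of Section~\ref{section3}. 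Once this is in hand, $\int_0^\infty \EW_x f(\vX_s)\dd s$ converges absolutely, so that the right-hand limit is elementary, while the left-hand limit follows by dominated convergence applied to the Fubini representation
\[
\EW_x \int_0^{L_t} f(\vX_s)\dd s
= \int_0^\infty \EW_x\bigl[\mathbf{1}_{\{L_t \ge s\}}\, f(\vX_s)\bigr]\dd s
\]
together with $L_t \to \infty$ $\mathbb{P}_x$-almost surely. The main obstacle is thus the exponential ergodicity of the boundary process $\vX$, whose verification requires adapting the transition-kernel estimates of Section~\ref{section3} to the time-changed setting; once granted, the remainder of the argument is bookkeeping.
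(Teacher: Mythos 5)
Your first two steps are exactly the two ingredients the paper's proof invokes: the change-of-variables formula \eqref{CVF}, in the form $\int_0^t f(\X_r)\dd L_r=\int_0^{L_t}f(\vX_s)\dd s$, and the strong Markov property at $\tau_D$, which, since $L$ vanishes on $[0,\tau_D]$, gives $\EW_x v(\X_{\tau_D},t)=\EW_x\int_0^t f(\vX_s)\dd s$. Up to this point your argument is correct and coincides with the paper's.

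The genuine gap is your remaining step, and it is not bookkeeping. First, the uniform exponential decay of $\vT_s f$ is precisely what would have to be proved, and the route you sketch does not transfer: by Theorem~\ref{lemma:integrodiff} the boundary process is a pure jump process generated by the Dirichlet-to-Neumann map, a nonlocal operator of order one, so the Nash/Aronson machinery of Section~\ref{section3} (local, divergence-form, Gaussian bounds) is the wrong model; one would instead need something like the Steklov spectral gap together with regularity of $\vT_t$, none of which is established. Second, even granting such decay, the dominated-convergence step fails: $\mathbf{1}_{\{L_t\ge s\}}$ and $f(\vX_s)$ are dependent, so $\lvert\EW_x[\mathbf{1}_{\{L_t\ge s\}}f(\vX_s)]\rvert$ is not controlled by $\lvert\EW_x f(\vX_s)\rvert$, and there is no integrable dominating function in $s$ uniform in $t$. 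Third, and decisively, the equality of limits you still need is false in general: one checks (e.g.\ via \eqref{eq:IG1} applied to $\phi=u|_{\partial D}$, so that $\Lambda_\kappa\phi=f$, combined with your strong Markov step, or by stopping the martingale $\int_0^T f(\X_s)\dd L_s+u(\X_T)$ at $\tau(t)$) that $\EW_x\int_0^t f(\vX_s)\dd s=u(x)-\EW_x u(\vX_t)$, so your claimed identity amounts to $\lim_{t\to\infty}\EW_x u(\vX_t)=0$. Since $\sigma$ is the symmetrizing (invariant) measure of $\vX$, that limit, when it exists, is $\sigma(\partial D)^{-1}\int_{\partial D}u\dd\sigma$, and this boundary mean need not vanish for the solution of Theorem~\ref{thm:cont}, which is normalized by $\int_D u\dd x=0$ (take $\kappa\equiv1$ and $u_0=x_1^2-x_2^2$ on an ellipse: the area mean and the boundary mean differ). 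Hence the deterministic-horizon limit $\lim_t\EW_x\int_0^t f(\vX_s)\dd s$ recovers $u$ only up to an additive constant, and the bridge you propose between $\int_0^{L_t}$ and $\int_0^t$ cannot be closed as stated; the substance your attempt does supply is exactly the strong Markov plus \eqref{CVF} part, which is all the paper's own two-line proof makes explicit, the horizon/normalization issue being glossed over there as well.
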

\begin{proof}
This follows from Theorem~\ref{thm:cont} by the
strong Markov property and the change of variables formula
\begin{equation}\label{CVF}
 \int_{\tau(a)}^{\tau(b)} f(s) \dd  L_s = \int_a^b f(\tau(s)) \dd  s 
\end{equation}
which follows by Monotone Class Theorem from the observation that
$$[L_a,L_b] = \tau \circ g_{a,b},$$ where we have set $g_{a,b}(t):= [ t \in [a,b] ]$.
\end{proof}

We will next verify the observation of Hsu~\cite{Hsu2} for our setting.
\begin{theorem}\label{bdry:generator}
  Suppose assumption (A2) holds.
  The infinitesimal generator of the boundary process $\vX$
  coincides with the Dirichlet-to-Neumann map $\Lambda_\kappa$ on 
  $H^{3/2}(\partial D)$.
\end{theorem}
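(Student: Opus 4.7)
The plan is to apply Fukushima's decomposition~(\ref{eqn:Fukudecomp}) to the harmonic extension of $\phi$ and then pass to the boundary via the time change $\tau$. Given $\phi\in H^{3/2}(\partial D)$, let $u\in H^1(D)$ be the unique weak solution of the Dirichlet problem $\nabla\cdot(\kappa\nabla u)=0$ in $D$ with $u|_{\partial D}=\phi$. Under assumption~(A2), elliptic boundary regularity gives $\Lambda_\kappa\phi=\partial_{\kappa\nu}u\in H^{1/2}(\partial D)\subset L^2(\partial D)$, which is what allows the identity we want to prove to be a statement about bona fide functions on $\partial D$.

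The crux is to identify the zero-energy continuous additive functional $N^{[u]}$ in
$$u(\X_t)=u(\X_0)+M^{[u]}_t+N^{[u]}_t.$$
Since $u$ is weakly harmonic with conormal derivative $\Lambda_\kappa\phi$, Green's identity yields $\E(u,w)=\int_{\partial D}\Lambda_\kappa\phi\,w\dd\sigma$ for every $w\in H^1(D)$. Hence the signed smooth measure associated with $u$ in the sense of the Dirichlet form is $\Lambda_\kappa\phi\cdot\sigma$; smoothness is immediate from the $L^2$-bound on $\Lambda_\kappa\phi$ together with the fact that $\sigma$ is a smooth measure of $\X$ (cf.~Lemma~\ref{lem:smooth}). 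Splitting $\Lambda_\kappa\phi$ into its positive and negative parts and applying the Revuz correspondence~(\ref{eqn:Revuz}) to each, one obtains
$$N^{[u]}_t=-\int_0^t\Lambda_\kappa\phi(\X_s)\dd L_s$$
as a strict-sense AF, using the refinement from Section~\ref{section3} and Lemma~\ref{lem:smooth}.

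Now perform the time change. Since $\X_{\tau(t)}=\vX_t\in\partial D$ and $u=\phi$ on $\partial D$, and by the change-of-variables formula~(\ref{CVF}) from Lemma~\ref{lemma:cont2},
$$\phi(\vX_t)=\phi(x)+\widehat M_t-\int_0^t\Lambda_\kappa\phi(\vX_s)\dd s,\qquad \widehat M_t:=M^{[u]}_{\tau(t)}.$$
Each $\tau(t)$ is an $(\F_t)$-stopping time with $L_{\tau(t)}=t$, so optional stopping, together with the finite-energy property of $M^{[u]}$, shows that $\widehat M$ is an $(\widehat\F_t)$-martingale with $\widehat M_0=0$. Taking $\EW_x$ then gives
$$\vT_t\phi(x)-\phi(x)=-\int_0^t\vT_s(\Lambda_\kappa\phi)(x)\dd s,$$
and dividing by $t$ and letting $t\downarrow 0$, the right-hand side tends to $-\Lambda_\kappa\phi(x)$ by strong continuity of $\vT$; this yields $\widehat{\mathcal L}\phi=-\Lambda_\kappa\phi$ on $\partial D$, i.e., the generator of $\vX$ coincides with $\Lambda_\kappa$ up to the usual sign convention between probabilistic generators and positive Dirichlet form operators.

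The main obstacle is the strict-sense identification of $N^{[u]}$: one needs $\Lambda_\kappa\phi\cdot\sigma$ to be a legitimate (signed) smooth measure of $\X$, which relies on $\Lambda_\kappa\phi\in L^2(\partial D)$ and hence on $\phi\in H^{3/2}$ together with (A2); without this regularity, $\Lambda_\kappa\phi$ only lives in $H^{-1/2}(\partial D)$ and the representation of $N^{[u]}$ as a Lebesgue-Stieltjes integral against $L$ fails. A secondary technical point is the promotion of $M^{[u]}_{\tau(\cdot)}$ from local to honest martingale under the time change, which follows from the uniform control $\EW_x \langle M^{[u]}\rangle_{\tau(t)}=2\EW_x\int_0^t\kappa\nabla u\cdot\nabla u(\vX_s)\dd s<\infty$ via the boundedness of $\nabla u$ near $\partial D$ afforded by (A2).
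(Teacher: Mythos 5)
Your argument is essentially the paper's own proof: harmonic extension of $\phi$ with $H^2(D)$ regularity from (A2), the Fukushima decomposition with the identification $N^{[u]}_t=-\int_0^t\partial_{\kappa\nu}u(\X_s)\dd L_s$, the time change by $\tau$, optional stopping at $\tau(t)$, and the change-of-variables formula (\ref{CVF}) to land on $\vT_t\phi-\phi=-\int_0^t\vT_s\Lambda_\kappa\phi\dd s$. The only cosmetic difference is the last step: you differentiate at $t=0$ using strong continuity of $\vT$, while the paper observes that $r\mapsto\vT_r\phi$ solves the abstract Cauchy problem for both $\Lambda_\kappa$ and $\widehat{\mathcal L}$ and concludes by uniqueness; your explicit remark about the sign convention is consistent with the paper's statement.

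One detail in your closing paragraph is incorrect, although it concerns a point the paper itself passes over silently (it simply invokes the Optional Stopping Theorem at the unbounded stopping time $\tau(t)$). The energy measure of $M^{[u]}$ is $2\kappa\nabla u\cdot\nabla u\dd x$, a measure on $D$, so $\langle M^{[u]}\rangle_{\tau(t)}=\int_0^{\tau(t)}2(\kappa\nabla u\cdot\nabla u)(\X_s)\dd s$; the formula (\ref{CVF}) converts $\dd L_s$-integrals into $\dd s$-integrals and does not apply here, so this quantity is not $2\int_0^t(\kappa\nabla u\cdot\nabla u)(\vX_s)\dd s$ (indeed the integrand would then be evaluated on $\partial D$, where the energy density is not defined). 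Moreover, (A2) with $\phi\in H^{3/2}(\partial D)$ gives $u\in H^2(D)$, which does not imply boundedness of $\nabla u$ near $\partial D$ for $d\geq 2$. So your proposed uniform-integrability justification does not stand as written; a genuine argument would, e.g., stop at $\tau(t)\wedge n$ and pass to the limit with a separate integrability estimate, a point that the paper's proof also leaves implicit.
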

\begin{proof}
When $\phi \in H^{3/2}(\partial D)$, the conductivity equation (\ref{eqn:con}) 
with Dirichlet boundary value $\phi$ admits a
solution in $H^2(D)$. We may apply the Fukushima decomposition
\[
\tilde{u}(\X_t)=\tilde{u}(\X_0)+M_t^{[u]}+N_t^{[u]}
\]
for all $t> 0$ and all $x \in \partial D$ since the Revuz measures
corresponding to $M^{[u]}$ and $N^{[u]}$ are smooth measures of $\X$ when $u \in H^2(D)$ and
$\partial_{\kappa\nu} u \in H^{1/2}(\partial D)$. We may hence assume that $u$
itself is quasi-continuous, i.e., $u = \tilde{u}$. Since $u$ solves the
conductivity equation, we have
\[
N_t^{[u]} = \int_0^t \nabla \cdot (\kappa\nabla u) (\X_s) \dd  s -
\int_0^t \partial_{\kappa \nu} u (\X_s) \dd  L_s
= -\int_0^t \partial_{\kappa \nu} u (\X_s) \dd  L_s.
\]
Since $M^{[u]}$ is a martingale AF, we obtain that the process
\[
Z_t = u(\X_t)-u(\X_0) +\int_0^t \partial_{\kappa \nu} u (\X_s) \dd  L_s
\]
is a martingale. Since $\tau(t)$ is a stopping time, we may apply the
Optional Stopping Theorem and we obtain
\[
0 = \EW_x Z_{\tau(t)} = \EW_x \phi(\vX_t)-\phi(x)
+\EW_x \int_0^{\tau(t)} \partial_{\kappa \nu} u (\X_s) \dd  L_s
\]
for every $x \in \partial D$.  After an application of the change of variables
formula~\eqref{CVF} we arrive to
\begin{equation}
  \label{eq:IG1}
  \EW_x \phi(\vX_t) = \phi(x)
  -\EW_x \int_0^t \partial_{\kappa \nu} u (\vX_s) \dd  s=
  \phi(x)-\EW_x \int_0^t \Lambda_\kappa \phi (\vX_s) \dd  s.
\end{equation}
The identity~\eqref{eq:IG1} applied to $\widehat T_r \phi$ gives
\[
\vT_{t+r} \phi = \vT_t(\vT_r \phi) = \vT_r \phi - \int_0^t
\vT_s\Lambda_\kappa \vT_r \phi \dd  s.
\]
Therefore, the function $v(r) = \vT_r \phi$ solves the abstract Cauchy
problem~\eqref{parabolic} with both $\Lambda_\kappa$ and the generator
$\widehat{\mathcal{L}}$ of the boundary semigroup in place of
$\mathcal{L}$ in~\eqref{parabolic} for the intial value $\phi$. This proves
the claim.
\end{proof}
\begin{remark}
  In this section we will not try to obtain the optimal regularity
  assumptions for the conductivity $\kappa$. Neither will we try to
  analyze the optimal regularity needed for the domain $D$. For instance, the
  assumption (A2) in
  Theorem~\ref{bdry:generator} is clearly not optimal. The assumption is
  needed for the elliptic regularity so that we have a simple
  representation for the additive functional $N^{[u]}$ with zero energy. This
  could be improved by using an approximation procedure similar to the one
  we used in the proof of Theorem~\ref{thm:cont}.
  However, we will leave these improvements for future work.

\end{remark}

We can now elaborate the probabilistic inverse problem a bit further.
For every given $\kappa$ we have an associated diffusion process
$\X_\kappa$. The above construction associates the diffusion process
$\X_\kappa$ with its boundary process $\vX_\kappa$. Suppose we are given
a boundary process $\vX$ and we know a priori that there exists at least
one $\kappa_0$ such that $\vX = \vX_{\kappa_0}$. Note that equality in
the sequel means equality in distribution. The uniqueness question
related to the Calder\'on problem would then be recasted as \emph{suppose
there exists a $\kappa$ such that $\vX = \vX_{\kappa}$. Does it
follow that $\X_\kappa = \X_{\kappa_0}$?} The reconstruction problem
can be stated as \emph{reconstruct the process $\X$ such that
$\vX_{\kappa} = \vX$}.

The Calder\'on problem in $2$ dimensions is known to be solvable for 
isotropic $\kappa \in L^{\infty}(D)$. Given the boundary
process $\vX = \vX_{\kappa_0}$ we can thus uniquely determine the generator
$\Lambda = \Lambda_{\kappa_0}$. The celebrated result of
Astala and P\"aiv\"arinta~\cite{AstalaP} says that whenever $\Lambda_\kappa =
\Lambda$ and both $\kappa$ and $\kappa_0$ are isotropic, 
uniformly bounded and uniformly elliptic, then $\kappa = \kappa_0$.
Therefore, the equality $\X_\kappa = \X_{\kappa_0}$ must hold as well.

The recent result by Haberman and Tataru~\cite{HabermanTataru} implies
the same for higher dimensional cases when $\kappa$ and $\kappa_0$ are
assumed to be $C^1(D)$ or if they are Lipschitz continuous and close to identity
in certain sense.

When the conductivity is not assumed to be isotropic the uniqueness has
always an obstruction, namely we have $\Lambda_{\kappa_0}=
\Lambda_{\kappa_1}$, whenever $\kappa_1 = F_* \kappa_0$ is the
push-forward conductivity by a diffeomorphism $F$ on $D$ that leaves the
boundary $\partial D$ invariant. In the plane, this is known to be the
only obstruction by the result of Astala, Lassas and
P\"aiv\"arinta~\cite{AstalaLP} which holds without additional regularity assumptions
on the conductivity. In higher dimensions the question is still very
much open in general, see~\cite{DSFKSU} for further discussion.

These results from analysis all rely on so-called \emph{complex geometric
optics solutions} and the authors are not aware of any probabilistic interpretation
of these solutions. Therefore, a probabilistic solution
to the (probabilistic interpretation of the) Calder\'on problem should
use some other techniques.

One possible approach could be to understand the structure of the 
boundary process more thoroughly and attempt to ``join the dots'' 
by constructing the compatible excursions between the boundary 
points or by showing the uniqueness of the distribution of the 
compatible excursions.
As a first step towards that direction we adapt the representation 
result from~\cite{Hsu2} to our setting. The following Theorem is the main result of this section.
\begin{theorem}
  \label{lemma:integrodiff}
  Suppose both assumptions $(A1)$ and $(A2)$ hold. Then the Dirichlet-to-Neumann map $\Lambda_\kappa$ is of form
  \[
  \Lambda_\kappa \phi = b \cdot \nabla_T \phi + A_0
  \]
  where $b$ is a vector field given by
  \[
  b := \Lambda_\kappa \mathrm{id}.
  \]
  The operator $A_0$ is the integral operator
  \[
  A_0 \phi(x): = 2\int_{\partial D} A_1(x) \phi(y) N(x,y) \dd \sigma(y),
  \]
  where $A_1(x) \phi(y) := \phi(y)-\phi(x) - \nabla_T \phi(x) \cdot
  (y-x)$ and $N$ is
  the conormal derivative of the Poisson kernel, explicitly given by the
  transition density kernel $p_0$ of the killed diffusion $\X_0$ as
  \[
  N(x,y) = \int_0^\infty \partial_{\kappa\nu(x)} \partial_{\kappa\nu(y)}
  p_0(t,x,y) \dd  t.
  \]
\end{theorem}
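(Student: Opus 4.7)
The plan is to combine Theorem \ref{bdry:generator}, which identifies $\Lambda_\kappa$ with the generator $\widehat{\mathcal{L}}$ of the boundary process $\vX$ on $H^{3/2}(\partial D)$, with a Poisson kernel representation of the $\kappa$-harmonic extension. Since $\vX$ is a Hunt process on $\partial D$, its generator decomposes into a local drift along the boundary and a non-local contribution produced by excursions of $\X$ into the interior, and my aim is to make each piece explicit.

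For $\phi \in H^{3/2}(\partial D)$, let $u \in H^2(D)$ denote its $\kappa$-harmonic extension. Writing $u(x) = \int_{\partial D} P(x,y)\phi(y)\dd\sigma(y)$ via the Poisson kernel $P$ associated to $\mathcal{L} = \nabla\cdot\kappa\nabla$, one formally has
\[
\Lambda_\kappa\phi(x) = \partial_{\kappa\nu(x)}u(x) = \int_{\partial D} K(x,y)\phi(y)\dd\sigma(y),\qquad K(x,y) := \partial_{\kappa\nu(x)}P(x,y).
\]
An application of Green's identity identifies $P$ with the conormal derivative in the second variable of the Dirichlet Green's function $G(x,y) = \int_0^\infty p_0(t,x,y)\dd t$ of the diffusion killed on hitting $\partial D$. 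Commuting differentiation and time integration then yields $K(x,y) = 2 N(x,y)$, the factor $2$ being tracked through the scaling convention relating $\E$ to the generator of $\X$ (the same convention responsible for the factor $2$ appearing in the Skorohod decomposition of $\X$).

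The kernel $K$ is not integrable near the diagonal, where $N(x,y)$ has a singularity of order $|y-x|^{-d}$ reflecting the fact that short excursions return very close to their starting point. The regularization uses the identities $\Lambda_\kappa 1 = 0$ (constants are $\kappa$-harmonic) and $\Lambda_\kappa \mathrm{id} = b$, read componentwise and justified by (A2) together with elliptic regularity for the harmonic extensions of the coordinate functions. Subtracting the first-order Taylor expansion $\phi(x) + \nabla_T\phi(x)\cdot(y-x)$, i.e.\ applying $\Lambda_\kappa$ to $y \mapsto A_1(x)\phi(y)$, gives
\[
\Lambda_\kappa\phi(x) = b(x)\cdot\nabla_T\phi(x) + 2\int_{\partial D} A_1(x)\phi(y)\, N(x,y)\dd\sigma(y),
\]
with the remaining integral absolutely convergent because $A_1(x)\phi(y) = O(|y-x|^2)$ for smooth $\phi$ and $|y-x|^{2-d}$ is integrable against the $(d{-}1)$-dimensional Hausdorff measure on $\partial D$.

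The main obstacles are two-fold. First, rigorously justifying the distributional identity $K(x,y) = 2N(x,y)$ at the boundary requires H\"older regularity up to the boundary of the killed transition kernel $p_0$—an analogue of Theorem \ref{thm:1} proved via the same reflection-and-interior-regularity technique, simplified by assumption (A1) which reduces $\X$ to a pure reflecting Brownian motion in a neighbourhood of $\partial D$. Second, establishing the near-diagonal singularity estimate $N(x,y) \lesssim |y-x|^{-d}$ quantitatively is the core technical step; this can be obtained from the Aronson-type Gaussian bound (\ref{eqn:Gaussian}) applied to $p_0$ in local boundary charts after the same straightening-of-the-boundary procedure used in the proof of Theorem \ref{thm:1}, and it is the step where the adaptation of Hsu's \cite{Hsu2} excursion-theoretic argument to the anisotropic divergence-form setting requires the most care. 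Assumption (A2) ensures that the Fukushima decomposition underlying Theorem \ref{bdry:generator} is sufficiently strong to evaluate $\Lambda_\kappa \mathrm{id}$ pointwise on $\partial D$.
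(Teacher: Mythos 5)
Your overall route coincides with the paper's: represent the $\kappa$-harmonic extension through the conormal derivative of the killed kernel $p_0$ (this is Lemma~\ref{Poissonkernel}), then remove the non-integrable diagonal singularity by subtracting the zeroth- and first-order terms, accounting for them via $\Lambda_\kappa\mathbf 1=0$ and $b=\Lambda_\kappa\mathrm{id}$. (The opening appeal to Theorem~\ref{bdry:generator} and the Hunt-process generator decomposition plays no role in what you actually do, and the paper's proof does not use the boundary process either.) The genuine gap is in the step you yourself single out as the core: the near-diagonal bound $N(x,y)\lesssim\lvert x-y\rvert^{-d}$ does not follow from the Aronson bound (\ref{eqn:Gaussian}) in boundary charts. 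That bound controls $p_0$ itself, not its derivatives; for merely measurable anisotropic $\kappa$ and a Lipschitz boundary there are no gradient estimates for the heat kernel in general, let alone pointwise control of the double conormal derivative $\partial_{\kappa\nu(x)}\partial_{\kappa\nu(y)}p_0(t,x,y)$ at boundary points. This is exactly why the paper does not estimate $N$ directly: Lemma~\ref{singularity} is proved by a comparison argument, where (A1) forces $\kappa\equiv 1$ near $\partial D$, so $\Lambda_\kappa-\Lambda_{\mathbf 1}$ is smoothing by elliptic regularity and $N$ has the same leading singularity as the kernel $N_1$ of the reflecting Brownian motion, for which Hsu's estimate is available; note also that this lemma assumes a $C^{1,1}$ boundary and only delivers integrability of $N(x,y)(\lvert x-y\rvert^2\wedge 1)$, which is all that is needed. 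Without this comparison (or a genuinely new boundary derivative estimate) your key bound is unsupported.

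A secondary gap: you assert that $\Lambda_\kappa$ applied to $y\mapsto A_1(x)\phi(y)$ is given by the absolutely convergent integral against $2N(x,\cdot)$, but absolute convergence alone does not justify exchanging the conormal differentiation at $x$ with the surface integral in $y$, and H\"older continuity of $p_0$ up to the boundary is not enough even to make sense of the conormal derivatives. The paper supplies both missing pieces: Lemma~\ref{killed:diff} uses (A2) and the eigenfunction expansion to put $p_0$ in $H^2_0(D)$ in each spatial variable, and the proof introduces $w:=\mathcal K\phi-\widetilde\phi\,\mathcal K\mathbf 1-\sum_j\widetilde V_j\bigl(\mathcal K W_j-\widetilde W_j\mathcal K\mathbf 1\bigr)$ with $\phi$ and $\nabla_T\phi$ extended constant along conormal directions, so that $\partial_{\kappa\nu}$ commutes with these multiplications; computing $\partial_{\kappa\nu}w$ once through the identities $\Lambda_\kappa\mathbf 1=0$, $\Lambda_\kappa W=b$ and once by differentiating under the integral (dominated convergence, licensed by Lemma~\ref{singularity}) produces the stated formula. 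Your proposal names the right algebraic ingredients but leaves both analytic justifications resting on tools that are too weak to carry them.
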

From this representation, we see that the generator of the boundary
process $\vX$ is of form
\[
\widehat{\mathcal{L}} = \nabla \cdot \widehat A \nabla + b \cdot
\nabla + A_0,
\]
with diffusion coefficient $\widehat A = 0$. This is an
integro-differential operator in the sense of Lepeltier and
Marchal~\cite{LepeltierM}. As in Hsu~\cite{Hsu2}, this means that $\vX$
is a pure jump process without diffusion part and 
the jump distribution can be described with the help
of~\cite[Th{\'e}or{\`e}me 10]{LepeltierM}.
\begin{lemma}
  \label{lemma:jumps}
   For every Borel measurable $\phi \colon \partial D \times \partial D \to
   \R_+$ vanishing on the diagonal and any stopping time $\tau$ for $\vX$, we have
   \[
   \EW_x \sum_{s \le \tau} \phi(\vX_{s-}, \vX_s)[\vX_{s-} \ne \vX_s] = 2\EW_x \int_0^\tau
   \int_{\partial D} \phi(\vX_s, y) N(\vX_s, y)\dd \sigma(y)\dd  s.
   \]
\end{lemma}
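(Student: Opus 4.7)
The plan is to derive the identity as the Lévy system formula for the pure jump Hunt process $\vX$. By Theorem \ref{lemma:integrodiff}, the generator $\widehat{\mathcal{L}}$ of $\vX$ has the Lepeltier--Marchal form of an integro-differential operator with vanishing diffusion coefficient, drift $b$, and candidate Lévy kernel
\[
n(x,\dd y) := 2N(x,y)\dd\sigma(y).
\]
First I would verify that $n$ is a genuine Lévy kernel on $\partial D$, i.e., that $\int_{\partial D}(1 \wedge |x-y|^2)\,n(x,\dd y) < \infty$ uniformly in $x\in\partial D$. This reduces to a local estimate on the conormal-conormal derivative $N(x,y)$ of the Poisson kernel of the killed diffusion $\X_0$; under assumption (A2) it has the same leading singularity on the diagonal as the corresponding kernel for the Laplacian, so the required integrability follows.

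Then I would apply Théorème 10 of \cite{LepeltierM}: a Hunt process whose generator has the above form is a pure jump process, and the pair $(H_s,n)$ with $H_s = s$ is a Lévy system for $\vX$. The standard sum-integral identity for Lévy systems of Hunt processes (cf.~\cite[Appendix A]{Fukushimaetal}) then yields, for every non-negative Borel $\phi$ vanishing on the diagonal and every stopping time $\tau$ of $\vX$,
\[
\EW_x\sum_{s\le\tau}\phi(\vX_{s-},\vX_s)[\vX_{s-}\ne\vX_s]
= \EW_x\int_0^{\tau}\!\!\int_{\partial D}\phi(\vX_s,y)\,n(\vX_s,\dd y)\dd s,
\]
which upon substituting the explicit form of $n$ is exactly the claim. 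A monotone class argument reduces the general Borel $\phi$ to products $\phi_1(\vX_{s-})\phi_2(\vX_s)$ with $\phi_1,\phi_2$ continuous and $\phi_2$ vanishing in a neighborhood of $\vX_{s-}$, where the Lévy system identity can be checked directly by evaluating $\widehat{\mathcal L}$ on two-point test functions.

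The main obstacle is the rigorous identification of $n(x,\dd y)=2N(x,y)\dd\sigma(y)$ as the Lévy kernel of $\vX$, rather than merely a kernel that reproduces $\widehat{\mathcal L}\phi$ pointwise. A clean route is to exploit symmetry: $\vX$ is $\sigma$-symmetric as the boundary trace of the $\dd x$-symmetric diffusion $\X$, so by the Beurling--Deny decomposition the trace Dirichlet form on $\partial D$ splits into drift and pure-jump parts, and the Revuz correspondence between the jump density and $N$ fixes both the explicit form and the factor $2$. Alternatively one matches $\widehat{\mathcal L}$ against the Lepeltier--Marchal decomposition on a dense class of smooth test functions separating jumps from drift, invoking uniqueness of the Lévy system of a Hunt process up to the normalization $H_s=s$. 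A subtlety in both approaches is that the excursions of $\X$ into $D$ between successive hits of $\partial D$ correspond bijectively to the jumps of $\vX$ via the time change $\tau$; one must check that this bijection preserves the predictable compensator structure, which is precisely the content of the pure-jump classification of Lepeltier--Marchal.
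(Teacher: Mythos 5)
Your overall strategy \mdash\ read the L\'evy system of $\vX$ off the Lepeltier--Marchal form of $\widehat{\mathcal{L}}$ and then invoke the sum/compensator identity \mdash\ is the same as the paper's, but the step you yourself flag as \qq{the main obstacle} is exactly the step that constitutes the proof, and your proposal leaves it unresolved. Th{\'e}or{\`e}me 10 of \cite{LepeltierM} concerns processes on $\R^d$ whose generator has the canonical integro-differential form with the truncation $[\lvert z\rvert\le 1]$ in the jump compensation, whereas $\vX$ lives on $\partial D$ and the representation of Theorem~\ref{lemma:integrodiff} uses the untruncated difference $A_1(x)\phi(y)=\phi(y)-\phi(x)-\nabla_T\phi(x)\cdot(y-x)$. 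The paper bridges both gaps concretely: it extends $\phi$, its tangential gradient and the drift as constants along the conormal directions, producing an extended process $\overline{\X}$ on $\R^d$ that stays on $\partial D$ when started there and coincides with $\vX$, to which Th{\'e}or{\`e}me 10 applies verbatim; and it first assumes $\mathrm{diam}(D)\le 1$, so that every jump has size at most one and the truncation is vacuous, then removes this restriction by an explicit scaling argument ($L^R_t=RL_t$, $\tau^R(t)=\tau(R^{-1}t)$, $\vX^R_t=R^{-1}\vX_{R^{-1}t}$, $N^R(x,y)=R^{d-2}N(Rx,Ry)$). Neither the extension nor the truncation/diameter issue appears in your plan, and \qq{matching $\widehat{\mathcal{L}}$ on a dense class of test functions and invoking uniqueness of the L\'evy system} is not by itself a proof that $(t,\,2N\dd\sigma)$ \emph{is} a L\'evy system for $\vX$: knowing the generator pointwise on $H^{3/2}(\partial D)$ does not yet deliver the martingale-problem/compensator statement that the Lepeltier--Marchal theorem takes as input.

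Your alternative route via $\sigma$-symmetry of the trace process and the Beurling--Deny decomposition is a genuinely different and plausible avenue (the paper itself records, after the lemma, that $\tfrac12\widetilde N(x,B)\dd\sigma(x)$ is the jumping measure of $\vX$), but as sketched it again presupposes the identification of the jump part of the trace form with $N(x,y)\dd\sigma(x)\dd\sigma(y)$, which is precisely the analytic content you deferred; to make it work you would need to derive that identification from Theorem~\ref{lemma:integrodiff} together with the L\'evy system theory for symmetric Hunt processes in \cite{Fukushimaetal}, including the factor-$2$ bookkeeping. Finally, a small but real slip: the integrability of $y\mapsto N(x,y)(\lvert y-x\rvert^2\wedge 1)$ is Lemma~\ref{singularity}, which is proved under assumption (A1) and a $C^{1,1}$ boundary, not under (A2) as you assert.
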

\begin{proof}
  Suppose first that $\textrm{diam}(D) \le 1$. We note
  that the operator $A_0$ in Lemma~\ref{lemma:integrodiff} coincides with
  the integral operator causing the jumps, namely when $\phi \in
  H^{3/2}(\partial D)$
  and it is continued as $\psi \in H^2(\R^d)$ so that $\phi$ and its
  tangential derivative are continued as constants along the conormal
  directions in the neighborhood of the boundary $\partial D$, then for
  every $x \in \partial D$ we have
  \[
  A_0\phi (x) = \int_{\R^d \setminus \{0\}} \big(\psi(x+z) - \psi(x) - \big[\lvert z\rvert\le
  1\big] z \cdot \nabla \psi(x) \big)
  S(x,\mathrm{d} z),
  \]
  where we have set $S(x,\mathrm{d} z):= 2N(x,x+z)\dd  \sigma_x(z)$ and
  $\sigma_x(B):= \sigma(x+B)$ for every Borel set $B \subset \R^d$. 
  In the same way we can extend the drift
  term and we obtain an extended process $\overline{\X}$ into whole
  space $\R^d$.  Since we know that $\vX_t \in \partial
  D$ for all $t\geq 0$, it follows that the extended process $\overline{\X}$ will stay on the boundary
  $\partial D$ if we start it from the boundary and it coincides with
  $\vX$ there.
  
  For this extended process $\overline{\X}$ we can apply
  the result~\cite[Th{\'e}or{\`e}me 10]{LepeltierM}
  and we obtain
  \[
  \begin{split}
  &\EW_x \sum_{s \le \tau} \phi(\vX_{s-}, \vX_s)[\vX_{s-} \ne \vX_s] \\
  &= 2\EW_x \int_0^\tau 
  \int_{\R^d\setminus \{0\}} \phi(\vX_s, \vX_s + y)
  N(\vX_s, \vX_s + y) \dd  \sigma_{\vX_s}(y)  \dd  s.
  \end{split}
  \]
  The claim follows now in this special case by change of integration variable.

  The general case follows by scaling: Let us denote $\X^R_t :=
  R^{-1}\X_t$. This is a reflecting diffusion process corresponding to $\kappa^R$ on
  a domain $D^R$, where $D^R := R^{-1}D$ and $\kappa^R(x) :=
  R^{-2}\kappa(Rx)$. Since the diameter of $D^R$ is one, the claim holds
  for the boundary process $\vX^R$ of $\X^R$.
  
  Let $L^R$ denote the local time of $\X^R$ on the boundary $\partial
  D^R$. By definition, this is in Revuz correspondence with the surface
  measure $\sigma^R$ of the boundary $\partial D^R$. By using the Revuz
  correspondence \cite[Theorem 5.1.3]{Fukushimaetal} and change of
  variables, it follows that 
  \[
  L_t^R = RL_t.
  \]

  Therefore, the right-inverse $\tau^R$ of the local time $L^R$ has a
  scaling law 
  \[
  \tau^R(t) = \tau(R^{-1} t)
  \]
  which in turn implies that the boundary processes scale by the law
  \[
  \vX^R_t = R^{-1}\vX_{R^{-1}t}
  \]
  and that $\eta$ is an $\vX$-stopping time if and only if $R\eta$ is an
  $\vX^R$-stopping time.

  If we denote by $N^R$, the conormal derivative of the Poisson kernel of
  $\X^R$ and compute the scaling law, we find out that
  \[
  N^R(x,y) = R^{d-2} N(Rx,Ry).
  \]
  With all these scaling laws, we are now ready to prove the claim for $\vX$. Let
  $\phi^R(x,y) := \phi(x,y)$ and let $\eta$ be an $\vX$-stopping time. We have
  \[
  \EW_x \sum_{s \le \eta} \phi(\vX_s, \vX_{s-})[ \vX_s \ne \vX_{s-}]
  =
  \widehat{\EW}_{R^{-1}x} \sum_{s \le \eta R} \phi^R(\vX^R_s, \vX^R_{s-})[ \vX^R_s
  \ne \vX^R_{s-}],
  \]
  where $\widehat{\EW}_{R^{-1}x}$ denotes the expecation given $\vX^R_0
  = R^{-1} x$.
  By the first part of the proof, the right-hand side is
  \[
  2\widehat{\EW}_{R^{-1}x} \int_0^{\eta R} \int_{\partial {D^R}} \phi^R(\vX^R_s,
  y) N^R(\vX^R_s, y)\dd \sigma(y)\dd  s.
  \]
  With the change of variables $y' = R y$ and $s' = s R^{-1}$ and the
  scaling law $N^R(\vX^R_{Rs}, R^{-1} y) = R^{d-2} N(\vX_s, y)$, the
  claim follows.
\end{proof}
This result states that the pair $(\widetilde N, \mathrm {id})$ is the
L\'evy system (see~\cite[Definition A.3.7]{Fukushimaetal}) of the Hunt
process $\vX$ where $\widetilde N$ is the kernel on $(\partial D,
\mathcal B(\partial D))$ given by
\[
\widetilde N(x, B): = 2\int_B [x \notin B] N(x, y) \dd  \sigma(y)
\]
Since the PCAF $\mathrm {id}_t = t$ with respect to $\vX$ has the Revuz
measure $\sigma$, we see that $\frac12\widetilde N(x,B)\dd \sigma(x)$ is
the \emph{jumping measure} $J$ of the boundary process $\vX$
(see~\cite[Theorem 5.3.1]{Fukushimaetal}).

The same proof as in~\cite[Proposition 4.4]{Hsu2} shows that the random
set of jump times $\{ \vX_{s-} \ne \vX_s\}$ is a countable and dense set
and that there is a constant $c > 0$, depending only on the domain $D$, such that
after any given time $t \in \R_+$ there are always infinitely many jumps
of size at least $c$.

For the proof of Theorem~\ref{lemma:integrodiff} we need the following
auxiliary results.
\begin{lemma}\label{killed:diff}
  Suppose assumption (A2) holds. Then the transition
  density kernel $p_0$ of the killed diffusion $\X^0_t := [t < \tau_D]
  \X_t$ with lifetime $\tau_D$ is H\"older continuous with respect to
  $(t,x,y)$, is in $H_0^2(D)$ with respect to $x$ and $y$ and continuously
  differentiable with respect to $t$ as a Banach space valued map.
\end{lemma}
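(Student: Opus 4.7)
The plan is to use that $p_0$ is the transition density of the semigroup $\{T^0_t,t\ge 0\}$ generated by the Dirichlet realization $\mathcal{L}_0$ of $\mathcal{L}$ on $L^2(D)$, and to establish each of the three assertions in turn by reusing techniques from Section~\ref{section3}.

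For the joint H\"older continuity on $(0,T]\times\overline{D}\times\overline{D}$ I would mirror the proof of Theorem~\ref{thm:1}. By a standard probabilistic comparison $0\le p_0(t,x,y)\le p(t,x,y)$, so the Aronson bound (\ref{eqn:Gaussian}) is inherited. Nash's interior H\"older estimate applies directly in the interior of $D$, while near $\partial D$ the Dirichlet condition forces $p_0$ to vanish continuously at the boundary. \emph{Odd} reflection (rather than the even reflection used in the proof of Theorem~\ref{thm:1}) across a locally straightened piece of $\partial D$ yields a function solving a parabolic equation with bounded uniformly elliptic coefficients on the doubled domain; applying Nash's interior H\"older estimate to this extension, combining with compactness of $\partial D$ and iterating via Chapman-Kolmogorov gives the assertion for arbitrary $T>0$.

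For the $H^2_0(D)$ regularity in $x$ (and, by symmetry of $p_0$, in $y$), fix $t>0$ and $y\in D$ and set $w:=p_0(t,\cdot,y)$. Since the form domain of $\mathcal{L}_0$ is $H^1_0(D)$ one has $w\in H^1_0(D)$, and $w$ solves $\mathcal{L} w=-\partial_t p_0(t,\cdot,y)$ weakly. By analytic semigroup theory the right-hand side lies in $L^2(D)$. Under assumption~(A2) the coefficients of $\mathcal{L}$ belong to $W^{1,\infty}(\mathcal{U}_j)$ with $C^{1,1}$ dividing surfaces $\Gamma_j$, so piecewise interior $H^2$-regularity applies in each $\mathcal{U}_j$, with $C^{1,1}$ transmission conditions across the $\Gamma_j$. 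Together with Grisvard-type $H^2$ elliptic regularity for the Dirichlet problem on the relevant portion of $\partial D$, this yields $w\in H^2(D)\cap H^1_0(D)=H^2_0(D)$.

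Continuous differentiability in $t$ as a Banach space valued map is purely semigroup-theoretic: since $\E$ restricted to $H^1_0(D)$ is a closed, symmetric, coercive form, $\mathcal{L}_0$ is self-adjoint and generates an analytic $C_0$-semigroup on $L^2(D)$, so $t\mapsto T^0_t\phi$ is $C^\infty$ into $L^2(D)$ on $(0,\infty)$ for every $\phi\in L^2(D)$; passing to the kernel via the semigroup identity $p_0(t,x,y)=\int_D p_0(t/2,x,z)p_0(t/2,z,y)\dd z$ upgrades this to continuous differentiability of $t\mapsto p_0(t,\cdot,y)$ into $L^2(D)$, and iterating with the preceding elliptic regularity step, into $H^2_0(D)$. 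The main obstacle is the $H^2_0$-regularity near the merely Lipschitz outer boundary $\partial D$: piecewise regularity under (A2) is standard, but elevating to $H^2$ up to $\partial D$ requires some additional smoothness of $\partial D$ or, alternatively, a flattening-and-reflection argument exploiting the fact that (A1) forces $\kappa$ to be constant in a neighborhood of $\partial D$, so that near $\partial D$ the kernel reduces locally to that of the Dirichlet Laplacian on a collar of the boundary.
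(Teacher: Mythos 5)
Your proposal is essentially correct but takes a genuinely different route on both halves of the lemma. For the H\"older continuity the paper does not redo any reflection construction: it simply writes $p_0(t,x,y)=p(t,x,y)-\EW_x\bigl\{p(t-\tau_D,\X_{\tau_D},y)[t>\tau_D]\bigr\}$ and inherits the regularity from Theorem~\ref{thm:1} via the strong Markov property. Your odd-reflection argument can be made to work, but it is not free: after the bi-Lipschitz flattening you must extend the transformed coefficients with the correct parity (evenly in $\hat{\kappa}_{ij}$ for $i,j<d$ and in $\hat{\kappa}_{dd}$, oddly in the mixed entries $\hat{\kappa}_{id}$, $i<d$) and justify that the odd extension of a function with zero trace on the hyperplane is a weak solution across it; this costs an extra page that the hitting-time identity avoids, though it yields the same conclusion. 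For the spatial and temporal regularity the paper repeats the construction of Lemma~\ref{lem:exponential} with $H^1_0(D)$ in place of $H^1(D)$, obtaining the Dirichlet eigenexpansion $p_0(t,x,y)=\sum_{j}e^{-\mu_j t}\psi_j(x)\psi_j(y)$, and then applies elliptic regularity (Grisvard) to the eigenfunctions, the factors $e^{-\mu_j t}$ guaranteeing convergence of the series in $H^2$ and smoothness in $t$ simultaneously; your route through the analyticity of the killed semigroup and the elliptic equation $\mathcal{L}\,p_0(t,\cdot,y)=-\partial_t p_0(t,\cdot,y)$ reaches the same conclusion and rests on exactly the same elliptic-regularity input, so the two arguments are of comparable strength, the eigenexpansion being slightly more economical because one regularity statement for the $\psi_j$ serves all three claims at once.

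Two caveats. First, the identity $H^2(D)\cap H^1_0(D)=H^2_0(D)$ that you invoke is false in the standard meaning of $H^2_0$ (normal derivative traces need not vanish); what your argument actually delivers, and what the paper's notation should be read as, is $H^2(D)\cap H^1_0(D)$. Second, the difficulty you flag at the end, namely $H^2$ regularity up to the merely Lipschitz outer boundary under (A2) alone, is a genuine point, but it is equally present in the paper's own appeal to \cite{Grisvard}; the authors themselves remark that (A2) is not optimal and is imposed precisely to secure this elliptic regularity, so you need not (and, since the lemma assumes only (A2), should not) route the boundary regularity through (A1) inside this proof.
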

\begin{proof}
  The H\"older continuity follows from Theorem~\ref{thm:1} by Markov property, since
  \[
  p_0(t,x,y) = p(t,x,y)- \EW_x \big\{p(t-\tau_D, X_{\tau_D}, y) [t >
  \tau_D]\big\}.
  \]
  Moreover, we can carry out the same construction as in the proof of
  Lemma \ref{lem:exponential} only by replacing the space $H^1(D)$
  with $H^1_0(D)$. This yields an eigenfunction expansion of $p_0$,
  namely
  \[
  p_0(t,x,y) = \sum_{j=0}^\infty e^{-\mu_j t} \psi_j(x)\psi_j(y),
  \]
  which can be seen to be in $H_0^2(D)$ with respect to both variables $x$
  and $y$ by standard elliptic regularity theory, cf.~\cite{Grisvard}.
\end{proof}
\begin{lemma}
 \label{Poissonkernel}
  Suppose assumption (A2) holds, then 
 for every bounded and measurable $\phi$ on $\partial D$ and every bounded and measurable $\psi$ on
 $\R_+$ we have
 \[
 \EW_x \phi(\X_{\tau_D}) \psi(\tau_D) = -\int_{\partial D}
 \int_{\R_+}\phi(y)\psi(t)\partial_{\kappa\nu(y)} p_0(t,x,y)\dd  t \dd \sigma(y).
 \]
\end{lemma}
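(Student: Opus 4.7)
The plan is to identify both sides of the claimed identity with the value at $(0,x)$ of the solution of a backward parabolic Dirichlet problem, and then compute that value as a boundary integral against $p_0$ via Green's second identity. A standard monotone class argument (with the Gaussian upper bound of Theorem~\ref{thm:1} dominating the right-hand side in the integration variables) reduces the claim to the case $\phi\in C^{\infty}(\partial D)$ and $\psi\in C^{\infty}_c((0,T))$ for some fixed $T>0$.

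For such test functions, let $w$ be the weak solution of
\[
\partial_t w+\mathcal{L}w=0\ \text{in }(0,T)\times D,\qquad w(T,\cdot)=0,\qquad w(t,y)=\psi(t)\phi(y)\ \text{on }\partial D.
\]
Under assumption~(A2) and the smoothness of the boundary data, $w$ is classical in $D$ and continuous up to $\partial D$. The Fukushima decomposition~(\ref{eqn:Fukudecomp}) applied to $s\mapsto w(s,\X_s)$ on $[0,\tau_D)$ (where interior regularity applies) together with $\partial_s w+\mathcal{L}w=0$ shows that $s\mapsto w(s\wedge\tau_D,\X_{s\wedge\tau_D})$ is a martingale; optional stopping at $\tau_D\wedge T$ yields
\[
w(0,x)=\EW_x w(\tau_D\wedge T,\X_{\tau_D\wedge T})=\EW_x\phi(\X_{\tau_D})\psi(\tau_D),
\]
where we use $w(T,\cdot)=0$ in $D$, the boundary values at $\tau_D$, and $\psi(\tau_D)=0$ on $\{\tau_D\geq T\}$.

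It remains to express $w(0,x)$ via $p_0$. Set $V(s):=\int_D p_0(s,x,y)w(s,y)\dd y$. By Lemma~\ref{killed:diff}, $p_0(s,x,\cdot)\in H^2_0(D)$ so the conormal trace $\partial_{\kappa\nu(y)}p_0$ is well-defined in $H^{1/2}(\partial D)$, and $\partial_s p_0=\mathcal{L}_y p_0$. Differentiating $V$ under the integral, using $\partial_s w=-\mathcal{L}w$ and Green's second identity (the $p_0\partial_{\kappa\nu}w$ boundary term drops out because $p_0|_{\partial D}=0$), produces
\[
\frac{d}{ds}V(s)=\int_{\partial D}\psi(s)\phi(y)\partial_{\kappa\nu(y)}p_0(s,x,y)\dd\sigma(y).
\]
Using $V(0^+)=w(0,x)$ (from the initial-mass property of $p_0$ together with continuity of $w(0,\cdot)$, which follows since $\psi$ vanishes near $0$) and $V(T)=0$ (from $w(T,\cdot)=0$), integration from $0$ to $T$ gives
\[
w(0,x)=-\int_0^T\int_{\partial D}\phi(y)\psi(t)\partial_{\kappa\nu(y)}p_0(t,x,y)\dd\sigma(y)\dd t,
\]
which combined with the martingale identification and the initial reduction proves the lemma. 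The main obstacle is justifying the Green's identity at the Lipschitz boundary and across the $C^{1,1}$ interfaces in (A2); Lemma~\ref{killed:diff} supplies the required regularity of $p_0$, while classical parabolic regularity piecewise on the subdomains of (A2), combined with $H^{1/2}$ trace theory on the Lipschitz boundary, handles $w$.
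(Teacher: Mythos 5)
Your route is genuinely different from the paper's: the paper proves Lemma~\ref{Poissonkernel} by citation, asserting that the arguments of Aizenman and Simon for the killed Brownian motion carry over verbatim once one replaces the killed heat kernel by $p_0$, harmonic functions by $\kappa$-harmonic ones and normal by conormal derivatives, with the needed regularity supplied by Lemma~\ref{killed:diff}. You instead give a self-contained derivation: reduce to smooth $\phi,\psi$, identify $\EW_x\phi(\X_{\tau_D})\psi(\tau_D)$ with the value $w(0,x)$ of a backward parabolic Dirichlet problem via optional stopping, and then compute $w(0,x)$ by differentiating $V(s)=\int_D p_0(s,x,y)w(s,y)\dd y$ and applying Green's second identity, where the vanishing trace of $p_0$ kills the $p_0\,\partial_{\kappa\nu}w$ term. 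The core computation is the same one that underlies the cited argument, but your write-up makes explicit exactly which regularity of $p_0$ (the $H^2$-in-space and $C^1$-in-time statements of Lemma~\ref{killed:diff}) is consumed and where, which is a gain in transparency over the paper's one-line appeal.

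Two steps are asserted more strongly than you can justify as written. First, under (A2) the coefficient is only piecewise $W^{1,\infty}$ across the interfaces $\Gamma_j$, so $w$ is not classical in $D$; more importantly, the martingale property of $s\mapsto w(s\wedge\tau_D,\X_{s\wedge\tau_D})$ does not follow from the Fukushima decomposition~(\ref{eqn:Fukudecomp}) as stated in the paper, which applies to time-independent $v\in\D(\E)$. You need either a time-dependent (space-time process) version of the decomposition, or an approximation/regularization argument of the kind used in the proof of Theorem~\ref{thm:cont}; this is standard but it is precisely where the work lies, and it should be supplied rather than waved at. Second, the domination you invoke for the monotone class step is off: Theorem~\ref{thm:1} gives H\"older continuity and the Gaussian bound~(\ref{eqn:Gaussian}) controls $p$, not the conormal derivative $\partial_{\kappa\nu(y)}p_0$, whose integrability near the lateral boundary is not a consequence of either. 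A cleaner extension argument is to note that $-\partial_{\kappa\nu(y)}p_0(t,x,y)\dd t\dd\sigma(y)$ is a positive kernel (since $p_0\geq 0$ in $D$ and vanishes on $\partial D$), so once the identity holds for a measure-determining class of continuous $\phi,\psi$ both sides are finite measures on $\partial D\times\R_+$ and agree for all bounded measurable integrands. With these two repairs your argument is sound and, in substance, reproduces the generalized Aizenman--Simon computation that the paper leaves implicit.
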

\begin{proof}
  The result follows by generalizing the results and proofs of \cite[Theorem
  A.3.2., Lemma A.3.3.]{AizenmanSimon} using the transition density kernel $p_0$ of the
  killed diffusion $\X^0$ instead of the transition density kernel of the
  killed Brownian motion. The proofs and the
  claims generalize in a straightforward manner by replacing the
  Brownian motion by the diffusion $\X$, the harmonic functions $h$ by
  the weak solutions of the conductivity equation $\nabla \cdot \kappa
  \nabla h = 0$ and the normal derivatives by conormal derivatives. The
  regularity we need for the proofs to go through follow from
  Lemma~\ref{killed:diff}.
\end{proof}
\begin{lemma}
  \label{singularity}
  Suppose assumption (A1) holds and the boundary $\partial D$ is of class $C^{1,1}$. For every $x \in \partial D$ the
  function $y \mapsto N(x,y) (\lvert y-x\rvert^2\wedge 1)$ is integrable with
  respect to the surface measure $\sigma$.
\end{lemma}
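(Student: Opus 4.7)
The plan is to reduce the problem, near the diagonal, to a classical estimate for the Brownian Poisson kernel on a $C^{1,1}$ domain, and handle the diagonal-free part by a uniform bound coming from the Gaussian estimate together with exponential decay.

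First I would exploit assumption (A1). Since $\kappa \equiv I$ in a neighborhood $\mathcal{U}$ of $\partial D$, the killed diffusion $\X^0$ coincides (in that strip) in law with killed Brownian motion on $D$, so the short-time boundary behavior of the transition density $p_0(t,x,y)$ at two boundary points is controlled by the analogous Brownian kernel. Fix $x \in \partial D$ and choose $r_0>0$ small enough that the strip $\{y \in \partial D : |y-x| < r_0\}$ lies in $\mathcal{U}$ and the $C^{1,1}$ boundary chart from the proof of Theorem~\ref{thm:1} applies around $x$. Split
\begin{equation*}
\int_{\partial D} N(x,y)(\lvert y-x\rvert^2\wedge 1)\dd \sigma(y) = I_{\text{near}} + I_{\text{far}},
\end{equation*}
according to whether $|y-x|<r_0$ or not.

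For $I_{\text{far}}$, I would bound $N(x,y)$ uniformly. By Lemma~\ref{killed:diff} and the Aronson-type Gaussian estimate (\ref{eqn:Gaussian}), $p_0(t,x,y) \le p(t,x,y)$ is Gaussian for small $t$ and, combined with the eigenfunction expansion of $p_0$ over Dirichlet eigenfunctions (as in Lemma~\ref{lem:exponential}), decays exponentially for large $t$. Standard parabolic interior/boundary regularity for the straightened equation (used already to prove Theorem~\ref{thm:1}) yields $|\partial_{\nu(x)}\partial_{\nu(y)} p_0(t,x,y)| \le C(t^{-\alpha}+e^{-ct})$ uniformly for $|x-y|\ge r_0/2$, so $N(x,y)$ is bounded on that set and $I_{\text{far}} \le C\sigma(\partial D)<\infty$.

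For $I_{\text{near}}$, under (A1) with $\kappa = I$ near $\partial D$ and $\partial D$ of class $C^{1,1}$, the iterated conormal derivative $N(x,y) = \int_0^\infty \partial_{\nu(x)}\partial_{\nu(y)} p_0(t,x,y)\dd t$ coincides near the diagonal with the conormal derivative of the classical Poisson kernel for the Laplacian on $D$. Using the classical estimate $P(x,y) \asymp d(x,\partial D)/|x-y|^d$ for the Poisson kernel of a $C^{1,1}$ domain (see e.g.\ the references to Hsu's work already cited) and differentiating with respect to the conormal at $x \in \partial D$, one obtains the two-sided bound $N(x,y) \le c|x-y|^{-d}$ for $|x-y|<r_0$. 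Consequently, after flattening the boundary by the bi-Lipschitz chart from the proof of Theorem~\ref{thm:1} (so the surface measure on $\partial D \cap B(x,r_0)$ is comparable to $(d-1)$-dimensional Lebesgue measure),
\begin{equation*}
I_{\text{near}} \le c\int_{|y-x|<r_0} \frac{|y-x|^2}{|y-x|^d}\dd \sigma(y) \le c'\int_0^{r_0} r^{2-d}\,r^{d-2}\dd r = c'r_0 < \infty.
\end{equation*}

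The main obstacle is the near-diagonal estimate $N(x,y)\le c|x-y|^{-d}$. Although (A1) reduces the computation to the Brownian setting, one still needs the classical $C^{1,1}$ Poisson kernel bound together with a justification that the integrated conormal derivative of $p_0$ indeed equals the conormal derivative of the Green's function/Poisson kernel, which one obtains by integrating $\partial_{\nu(x)}\partial_{\nu(y)} p_0$ in time and identifying the result via Lemma~\ref{Poissonkernel}. The far-field contribution is routine given Theorem~\ref{thm:1} and Lemma~\ref{lem:exponential}.
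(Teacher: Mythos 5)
Your overall strategy is the same as the paper's in spirit: use (A1) to reduce the near-diagonal behaviour of $N$ to the constant-coefficient (Brownian/Laplacian) case, where the classical $C^{1,1}$ estimates give a bound of order $\lvert x-y\rvert^{-d}$, and then integrate; the paper does precisely this, except that it simply cites Hsu~\cite{Hsu2} for the constant-coefficient kernel $N_1$ instead of re-deriving the Poisson-kernel bound, and it does not need your near/far splitting. Your far-field bound (Gaussian estimate plus spectral decay) and the near-field computation $\int_0^{r_0} r^{2-d}\,r^{d-2}\dd r<\infty$ are fine.

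The gap is in the reduction itself. It is not true that the killed diffusion $\X^0$ ``coincides in law with killed Brownian motion in the strip'': even with $\kappa\equiv 1$ on $\mathcal{U}$, paths started near $\partial D$ leave the strip, travel through the region where $\kappa\neq 1$, and return before $\tau_D$, so $p_0(t,x,y)$, the associated Poisson kernel, and hence $N(x,y)$ all differ from their Brownian counterparts even for $x,y\in\partial D$ close together. What is actually needed \mdash and what the paper supplies in one line \mdash is that this difference is of lower order: under (A1) the operator $\Lambda_\kappa-\Lambda_{\mathbf 1}$ is a smoothing operator by standard elliptic regularity (cf.~\cite{HankeHR}), so $N$ and $N_1$ have the same leading singularity on the diagonal and the bound for $N_1$ transfers to $N$. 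Your proposed fix via Lemma~\ref{Poissonkernel} does not close this step: that lemma only identifies $N$ with the (time-integrated) conormal derivative of the diffusion's own kernel $p_0$; it provides no comparison between the diffusion's kernel and the Brownian one. With the smoothing-operator argument (or any equivalent perturbation estimate for the strip) inserted at that point, your proof becomes a correct, somewhat more explicit version of the paper's.
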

\begin{proof}
  When $\kappa \equiv 1$, the proof given in \cite{Hsu2} for $\kappa
  \equiv \frac12$ generalizes and
  gives the claim for kernel $N_1$ corresponding to $\kappa \equiv 1$. If
  we assume (A1), then the operator $\Lambda_\kappa - \Lambda_{\mathbf
  1}$ is a smoothing operator which follows by the standard elliptic
  regularity, cf.~\cite{HankeHR}. This implies that the kernels $N_1$
  and $N$ have the same leading
  singularities and the claim follows from the estimate for $N_1$.
\end{proof}
\begin{proof}[Proof of Theorem~\ref{lemma:integrodiff}]
Suppose $\phi \in C^2(\partial D) \cap H^{3/2}(\partial D)$. The solution
of the Dirichlet problem is by Lemma~\ref{Poissonkernel}
\[
u(x) = \EW_x \phi(\X_{\tau_D}) = \int_{\partial D} 
 \int_{\R_+}\phi(y)\partial_{\kappa\nu(y)} p_0(t,x,y)\dd  t\dd \sigma(y) 
 =: \mathcal K\phi(x).
\]
Therefore, the Dirichlet-to-Neumann map $\Lambda_\kappa$ maps $\phi$ to 
\begin{equation}
  \label{equ:DN1}
  \Lambda_\kappa \phi(x) = \partial_{\kappa\nu(x)} \mathcal K\phi(x).
\end{equation}
Let us extend $\phi$ and its first order tangential derivative $V :=
\nabla_T \phi$ into
the neighborhood of the boundary as constant along the conormal
directions. We will denote the extensions $\widetilde \phi$ and $\widetilde
V$, respectively. We will compute the conormal derivative of the function
\[
w: = \mathcal K \phi - \widetilde \phi \mathcal K \mathbf 1 -
\sum_{j=1}^d \widetilde
{V_j} \big(\mathcal K W_j - \widetilde W_j \mathcal K \mathbf 1\big),
\]
where $\{W\}$ is a vector field on the boundary defined by
$W (y): = y_T$ as the projection to the tangent plane going through the
point $y$.
By construction the conormal derivative commutes with
multiplication by the extended functions and vector fields. Therefore,
\[
\partial_{\kappa\nu} w = \Lambda_\kappa \phi - \phi \Lambda_\kappa \mathbf 1 - 
\sum_{j=1}^d V_j \Lambda_{\kappa} W_j
= \Lambda_\kappa \phi - V \cdot \Lambda_{\kappa} W,
\]
where $\Lambda_\kappa \mathbf 1 = 0$ since $u(x) = 1$ in $\overline D$
is the unique solution to the Dirichlet problem and therefore, the
conormal derivative vanishes identically.
We can compute the left-hand side in a different way, namely 
\[
\nabla w(x) = \nabla_x \int_{\partial D} \big(\phi(y)-\widetilde \phi(x)-
\widetilde V(x)\cdot (\widetilde W(y) - \widetilde W(x))\big) K(x,y) \dd \sigma(y)
\]
for almost every $x$ in a neighborhood of boundary.
By Lemma~\ref{singularity} we can use the Dominated Convergence Theorem
to take the differentiation inside the integration and we obtain thus
\[
\partial_{\kappa\nu} w(x) = \int_{\partial D} \big(\phi(y)- \phi(x)-
\nabla_T\phi(x)\cdot (y - x)\big) N(x,y) \dd  \sigma(y) = A_0\phi(x)
\]
for almost every $x \in \partial D$.
\end{proof}

We have demonstrated that the boundary process $\vX$ is intimately related
with the Dirichlet-to-Neumann map $\Lambda_\kappa$. In order to solve the probabilistic
inverse problem of recovering the process $\X_\kappa$ inside the domain,
we should provide the \emph{excursions} between the points $\vX_{t-}$
and $\vX_t$ at the jumps so that the L\'evy system of the
boundary process $\vX$ and \emph{excursion law} of these
excursions would be consistent, cf.~\cite{Hsu2}. These excursions could
be regarded as point processes on the space of continuous functions that
start at the boundary, stay inside the domain and stop at the boundary.

The point process of excursions for the case $\kappa \equiv 1$ was
defined in~\cite{Hsu2} and under a certain consistency assumption it was shown
that the interior RBM $\X$ can be reconstructed from its point
excursions and the boundary process. However, the consistency assumption
was derived by using RBM to begin with and as it is noted in~\cite{Hsu2}, there might 
be other consistency assumptions leading to other
constructions. Showing that there are no other consistent constructions
is equivalent to the probabilistic inverse problem.

However, we will not attempt to analyze the excursion processes for
conductivities $\kappa$ in this paper and therefore, we will leave the
analysis of the probabilistic inverse problem for future work.

\section{Conclusion}\label{section6}
We have obtained probabilistic interpretations of both the direct as well as the 
inverse problem of electrical impedance tomography. Using the theory of 
symmetric Dirichlet spaces we have derived Feynman-Kac type representation 
formulae generalizing the probabilistic representations from 
\cite{Bass, Pardoux, Papanicolaou}. These formulae are potentially relevant 
in statistical inversion theory as well as stochastic numerics of 
problems involving random, rapidly oscillating coefficients. Furthermore we have given 
a probabilistic formulation of Calder\'{o}n's inverse conductivity problem, 
generalizing results from \cite{Hsu2}, which may yield 
a novel perspective and a probabilistic set of tools when it comes to studying 
the open question of unique determinability of merely measurable conductivities 
for $d\geq 3$.

\section*{Acknowledgments} 
The research of M.~Simon was supported by the Deutsche
Forschungsgemeinschaft (DFG) under grant HA 2121/8 -1 583067.
This work is part of M. Simon's Ph.D thesis, who would like to express his gratitude to his 
advisor Prof.~M.~Hanke for his guidance and continuous support. He
would also like to thank Prof.~L.~P\"{a}iv\"{a}rinta for the kind
invitation to the Department of Mathematics and Statistics at the
University of Helsinki, where part of the work was carried out.
The research of P.~Piiroinen was supported by Academy of Finland (AF) under Finnish
Centre of Excellence in Inverse Problems Research 2012--2017,
decision number 250215. He has also been supported by an AF project,
decision number 141075.

\end{document}